\newcommand{\beq}{\begin{eqnarray}}
\newcommand{\eeq}{\end{eqnarray}}
\newcommand{\be}[1]{\begin{equation}\label{#1}}
\newcommand{\ee}{\end{equation}}
\newcommand{\eps}{{\eps}}
\newcommand{\N}{\mathbb N}
\newcommand{\R}{{\mathbb R}}
\newcommand{\Sp}{\mathbb S}
\renewcommand{\H}{\mathrm H}
\renewcommand{\L}{\mathrm L}
\def\1{\mathbb I}
\renewcommand{\(}{\left(}
\renewcommand{\)}{\right)}
\newcommand{\nrm}[2]{\|{#1}\|_{\L^{#2}(\R^d)}}
\newcommand{\nrmC}[2]{\|{#1}\|_{\L^{#2}(\mathcal C)}}
\newcommand{\nrmR}[2]{\|{#1}\|_{\L^{#2}(\R)}}
\newcommand{\inC}[1]{\int_{\mathcal C}#1\;dy}
\newcommand{\inR}[1]{\int_{\R}#1\;ds}
\newcommand{\ird}[1]{\int_{\R^d}#1\;dx}
\renewcommand{\eps}{\varepsilon}
\newcommand{\finproof}{\unskip\null\hfill$\square$\vskip 0.3cm}
\newenvironment{proof}{\par\smallskip\noindent\emph{Proof.} }{\finproof}
\newtheorem{theorem}{Theorem}
\newtheorem{lemma}[theorem]{Lemma}
\newtheorem{remark}[theorem]{Remark}
\newcommand{\nrmcnd}[2]{\|{#1}\|_{\L^{#2}(\mathcal C)}}
\renewcommand{\S}{{\Sp^{d-1}}}
\newcommand{\finprf}{\null\hfill$\square$\vskip 0.3cm}
\definecolor{darkgreen}{rgb}{0,0.4,0}
\newcommand{\rmv}[1]{}
\newcommand{\eqref}[1]{(\ref{#1})}
\begin{document}
\title[Symmetry breaking in PDEs]{Branches of non-symmetric critical points and symmetry breaking in nonlinear elliptic partial differential equations}

\author{Jean Dolbeault, Maria J.~Esteban}

\address{Ceremade (UMR CNRS no.~7534), Univ.~Paris-Dauphine, Pl.~de Lattre de Tassigny, 75775 Paris Cedex~16, France}
\ead{dolbeaul@ceremade.dauphine.fr, esteban@ceremade.dauphine.fr}

\begin{abstract} In this paper we study the bifurcation of branches of non-symmetric solutions from the symmetric branch of solutions to the Euler-Lagrange equations satisfied by optimal functions in functional inequalities of Caffarelli-Kohn-Nirenberg type. We establish the asymptotic behavior of the branches for large values of the bifurcation parameter. We also perform an expansion in a neighborhood of the first bifurcation point on the branch of symmetric solutions, that characterizes the local behavior of the non-symmetric branch. These results are compatible with earlier numerical and theoretical observations. Further numerical results allow us to distinguish two global scenarios. This sheds a new light on the symmetry breaking phenomenon.\end{abstract}

\vspace*{-30pt}
\ams{(MSC 2010) 35C20; 35J60; 26D10; 46E35; 58E35}

\hspace*{1.68cm}\parbox{13.18cm}{\footnotesize\noindent\emph{Keywords}: Hardy-Sobolev inequality; Caffarelli-Kohn-Nirenberg inequality; extremal functions; Emden-Fowler transformation; radial symmetry; symmetry breaking; ground state; P\"oschl-Teller operator; bifurcation; elliptic equations; branches of solutions.}


\section{Introduction}\label{Sec:Intro}

In this paper we investigate how symmetry can be broken in some variational problems. Symmetry breaking occurs when antagonistic effects are competing, like weights or potentials (or coupling with other fields) on the one hand and nonlinearites on the other hand. An archetypal example for such issues is the question of symmetry of optimal functions in Caffarelli-Kohn-Nirenberg inequalities. While all terms are invariant under rotation around the origin, it is known that optimizers are not always radially symmetric. Caffarelli-Kohn-Nirenberg inequalities, also known as Hardy-Sobolev inequalities, is a particularly simple setting for the study of symmetry breaking because weights and nonlinear terms have simple homogeneity properties, so that Euler-Lagrange equations inherit scaling properties that allow to further simplify the study of the symmetry issues. Still, ranges of parameters for which optimizers are radially symmetric have not been completely determined yet.

Symmetry breaking issues are present in many areas of physics involving partial differential equations: quantum mechanics, mean field models, equations for phase transition, ferromagnetism, mechanics, \emph{etc.} Various mathematical methods are available either for proving symmetry (uniqueness, comparison techniques based for instance on moving plane methods, symmetrization: see for instance~\cite{Chou-Chu-93, MR1731336, springerlink:10.1007/s00526-011-0394-y}) or for proving symmetry breaking (multiplicity and bifurcation, energy, spectral methods). However, threshold cases are not characterized even in the simplest cases. 

A simple mechanism which can break symmetry is the instability of the \emph{symmetric extremals}, that is, the case where the extremals among radially symmetric functions are not local minima in the larger space of functions with no symmetry assumption. In the case of the Caffarelli-Kohn-Nirenberg inequalities, this instability has been studied in several papers (see~\cite{Felli-Schneider-03, Catrina-Wang-01,DDFT}) and the corresponding region of symmetry breaking is delimited by an explicit curve. However, it has been proved in~\cite{springerlink:10.1007/s00526-011-0394-y} that symmetry breaking can occur even in a range of parameters for which the \emph{symmetric extremals} are stable, that is, in cases where they are strict local minima. In order to understand this phenomenon, and symmetry breaking in general, we study the solution set of the Euler-Lagrange equations corresponding to a minimization problem associated with the Caffarelli-Kohn-Nirenberg inequalities. For those equations, we investigate the bifurcation of non-radially symmetric solutions from radially symmetric ones. The two theoretical contributions of the present paper are \emph{an asymptotic analysis of the branches for large values of the bifurcation parameter}, in Section~\ref{Sec:KnownAsymp}, and a detailed \emph{expansion of the non-radial solutions in a neighborhood of the bifurcation point} on the branch of radial extremals, in Section~\ref{Sec:Bifurcation}. Both results are consistent with known and new numerical results presented in Section \ref{Sec:NumericsAndScenarios} and give a significant insight into the local behaviour of the solutions, either around the bifurcation point or asymptotically. 

We shall consider a family of  Caffarelli-Kohn-Nirenberg inequalities which, for a given dimension $d\ge3$, depend on two exponents, $p\in(2,2^*]$ with $2^*:=2\,d/(d-2)$ and $\theta\in (0,1]$, and on a parameter $\Lambda>0$. 

For any dimension $d\ge 3$, let us consider the set $\mathcal D$ of all smooth functions which are compactly supported in $\R^d$. Define the numbers
\[
\vartheta(p,d):=d\,\frac{p-2}{2\,p}\,,\;\,a_c:=\frac{d-2}2\,,\; p(a,b):=\frac{2\,d}{d-2+2\,(b-a)}\,.
\]
 For any $a<a_c$, we consider the following \emph{Caffarelli-Kohn-Nirenberg inequalities}, which have been introduced in~\cite{Caffarelli-Kohn-Nirenberg-84} (also see~\cite{DDFT}):

\emph{Let $d\ge3$, $a<a_c$, $b\in[a,a+1]$ and assume that $p=p(a,b)$. Then, there exists a finite positive constant $\mathcal K_{\rm CKN}$ depending on $\theta$, $a$ and $p$ such that, for any $w\in\mathcal D$,
\be{CKNtheta}
\nrm{|x|^{-b}\,w}p^2\le\mathcal K_{\rm CKN}\,\nrm{|x|^{-a}\,\nabla w}2^{2\,\theta}\,\nrm{|x|^{-(a+1)}\,w}2^{2\,(1-\theta)}\,.
\ee}

According to~\cite{Catrina-Wang-01}, the Caffarelli-Kohn-Nirenberg inequalities on $\R^d$ can be rewritten in cylindrical variables using the Emden-Fowler transformation
\[
s=\log|x|\;,\quad\omega=\frac{x}{|x|}\in\S\,,\quad u(s,\omega)=|x|^{a_c-a}\,w(x)\,.
\]
The above inequalities are then equivalent to Gagliardo-Nirenberg-Sobolev inequalities on the cylinder $\mathcal C:=\R\times\S$ that can be written as
\be{CKNthetaC}
\nrmC up^2\leq\,\mathsf K_{\rm CKN}(\theta,\Lambda,p)\;\(\nrmC{\nabla u}2^2+\Lambda\,\nrmC u2^2\)^\theta\,\nrmC u2^{2\,(1-\theta)}\,,
\ee
for any $u\in H^1(\mathcal C)$, where $a$ and $\Lambda$ are related by $\Lambda=(a-a_c)^2$. \emph{Here we adopt the convention that the measure on $\S$ is the uniform probability measure}. Let us define
\be{Eqn:Q}
\mathcal Q_\Lambda^\theta[u]:=\frac{\(\nrmC{\nabla u}2^2+\Lambda\,\nrmC u2^2\)^\theta\,\nrmC u2^{2\,(1-\theta)}}{\nrmC up^2}\,.
\ee
In the case $\theta=1$, we shall simply write $\mathcal Q_\mu$ instead of $\mathcal Q_\mu^1$. We are interested in the map $\Lambda\mapsto\mathsf K_{\rm CKN}(\theta,\Lambda,p)$, what amounts to study the dependence of the minimum of $\mathcal Q_\Lambda^\theta$ on~$\Lambda$. The corresponding Euler-Lagrange equation is
\be{Euler-thetatheta}
-\,\theta\,\Delta u+\Big[(1-\theta)\,t[u]+\Lambda\Big] u=u^{p-1}\,,
\ee
with
\[
t[u]:=\frac{\inC{{|\nabla u|^2}}}{\inC{u^2}}\,.
\]
Let us introduce the parameter $\mu=\big((1-\theta)\,t[u]+\Lambda\big)/\theta$. Up to multiplication by a constant, the solutions of \eqref{Euler-thetatheta} are solutions of
\be{Euler-thetaequal1}
-\Delta u+\mu\,u=u^{p-1}\,.
\ee
If $\theta=1$, we may notice that $\Lambda=\mu$. Hence we may solve \eqref{Euler-thetaequal1}, denote by $u_\mu$ the corresponding solution which minimizes $\mathcal Q_\mu$, compute $\Lambda$ and then parametrize the solutions of \eqref{Euler-thetatheta} in terms of~$\mu$. Let us give some details. With
\[
\tau(\mu):=t[u_\mu]\quad\mbox{and}\quad\nu(\mu):=\frac{\nrmcnd{u_\mu}2^2}{\nrmcnd{u_\mu}p^2}\,,
\]
we can describe the set of solutions to \eqref{Euler-thetatheta} in parametric form as $\mu\mapsto(\Lambda^\theta(\mu),J^\theta(\mu))$ where\begin{eqnarray*}
&&\Lambda^\theta(\mu)=\theta\,\mu-(1-\theta)\,\tau(\mu)\,,\\
&&J^\theta(\mu):=\mathcal Q^\theta_\Lambda[u_\mu]=\nu(\mu)\,\theta^\theta\,(\mu+\tau(\mu))^\theta\,.
\end{eqnarray*}
The uniqueness of $u_\mu$ is not obvious and details will be provided in this paper.
We shall denote by $\tau_*(\mu)$, $\nu_*(\mu)$, $\Lambda_*^\theta(\mu)$ and $J_*^\theta(\mu)$ the corresponding quantities for the symmetric solution $u_{\mu,*}$ of \eqref{Euler-thetaequal1}.

With $\vartheta=\vartheta(p,d)=d\,\frac{p-2}{2\,p}$, denote by $\mathsf K_{\rm GN}=\mathsf K_{\rm GN}(p,d)$ the optimal constant in the Gagliardo-Nirenberg-Sobolev inequality
\be{Ineq:GNS}
\nrm up^2\le\frac{\mathsf K_{\rm GN}}{|\S|^\frac{p-2}p}\,\nrm{\nabla u}2^{2\,\vartheta}\nrm u2^{2\,(1-\vartheta)}\;,\quad\forall\,u\in\H^1(\R^d)\,.
\ee
Our first result is a direct consequence of the above parametrization and deals with the asymptotics of $\mathcal Q_\Lambda^\theta$ for large values of $\Lambda$.
\begin{theorem}\label{Thm:asymptott} With the previous notations, for all $\theta>\vartheta=\vartheta(p,d)$, we have
\[
\lim_{\mu\to\infty}\mu^{\vartheta-\theta}\,J^\theta(\mu)=\frac{\theta^\theta}{\vartheta^\vartheta}\,(1-\vartheta)^{\vartheta-\theta}\,\frac1{\mathsf K_{\rm GN}}\,.
\]
Moreover, the parametric curve $\mu\mapsto(\Lambda^\theta(\mu),J^\theta(\mu))$ is asymptotic to the curve
\[
\Lambda\mapsto\frac{\theta^\theta}{\vartheta(p,d)^{\vartheta(p,d)}\,(\theta-\vartheta(p,d))^{\theta-\vartheta(p,d)}}\,\frac{\Lambda^{\theta-\vartheta(p,d)}}{\mathsf K_{\rm GN}}\,,
\]
for large values of $\mu$ or, equivalently, for large values of $\Lambda=\Lambda^\theta(\mu)$.
\end{theorem}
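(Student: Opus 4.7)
The plan is to reduce Theorem~\ref{Thm:asymptott} to two scaling identities for the minimizer $u_\mu$ of $\mathcal Q_\mu$, valid as $\mu\to\infty$:
\[
\tau(\mu) \sim \frac{\vartheta}{1-\vartheta}\,\mu, \qquad \nu(\mu) \sim \frac{(1-\vartheta)^{\vartheta}}{\vartheta^{\vartheta}\,\mathsf K_{\rm GN}}\,\mu^{-\vartheta}.
\]
Once these are known, the parametric formulas give $\Lambda^\theta(\mu) = \theta\mu - (1-\theta)\tau(\mu) \sim (\theta-\vartheta)\mu/(1-\vartheta)$, which tends to $+\infty$ precisely under the hypothesis $\theta>\vartheta$, and $J^\theta(\mu) = \theta^\theta\nu(\mu)(\mu+\tau(\mu))^\theta \sim \theta^\theta (1-\vartheta)^{\vartheta-\theta}\mu^{\theta-\vartheta}/(\vartheta^\vartheta\mathsf K_{\rm GN})$, which is the first limit in the theorem. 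Eliminating $\mu$ in favour of $\Lambda=\Lambda^\theta(\mu)$ via the first relation then produces the second (asymptotic-curve) statement, the factors of $1-\vartheta$ conveniently cancelling.

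To establish the two scaling identities I would exploit the scale-invariance of \eqref{Euler-thetaequal1}: if $u$ solves it at parameter $\mu$, then $v(z):=\mu^{-1/(p-2)}u(z/\sqrt\mu)$ formally solves $-\Delta v+v=v^{p-1}$. Using translation invariance in $s$ and rotation on $\S$, place the maximum of $u_\mu$ at a fixed point $z_\mu\in\mathcal C$ and set
\[
v_\mu(z):=\mu^{-1/(p-2)}\,u_\mu\!\left(z_\mu+z/\sqrt\mu\right)
\]
in local coordinates. Then $v_\mu$ satisfies $-\Delta v_\mu+v_\mu=v_\mu^{p-1}$ on domains that exhaust $\mathbb R^d$ as $\mu\to\infty$, since $\mathcal C$ is locally flat at the scale $1/\sqrt\mu$. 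A concentration-compactness argument, combined with uniqueness up to translation of positive finite-energy solutions of the Euclidean equation, shows that $v_\mu$ converges strongly in $H^1\cap \L^p(\mathbb R^d)$ to the Euclidean ground state $\phi$ that saturates the inequality \eqref{Ineq:GNS}.

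With the limit profile $\phi$ in hand, the two scaling identities follow from a change of variables. Because the normalized probability measure on $\S$ flattens, at small scales, to $|\S|^{-1}$ times Lebesgue measure on $\mathbb R^{d-1}$, one obtains
\[
\tau(\mu)\sim\mu\,\frac{\|\nabla\phi\|_2^2}{\|\phi\|_2^2},\qquad \nu(\mu)\sim\mu^{-\vartheta}\,|\S|^{-(p-2)/p}\,\frac{\|\phi\|_2^2}{\|\phi\|_p^2}.
\]
The Pohozaev identity applied to $\phi$, combined with the integrated equation $\|\nabla\phi\|_2^2+\|\phi\|_2^2=\|\phi\|_p^p$, gives $\|\nabla\phi\|_2^2/\|\phi\|_2^2=\vartheta/(1-\vartheta)$, and the fact that $\phi$ achieves equality in \eqref{Ineq:GNS}, combined with this ratio, determines $\|\phi\|_p^2/\|\phi\|_2^2$. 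Substituting yields the two asymptotics of the first paragraph.

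The hard part will be the concentration-compactness step: proving that $v_\mu\to\phi$ strongly and not merely weakly. Vanishing is excluded by construction, since $v_\mu(0)=\mu^{-1/(p-2)}\|u_\mu\|_{\L^\infty(\mathcal C)}$ is uniformly bounded below (by testing $\mathcal Q_\mu$ against a suitably rescaled Euclidean ground state and invoking the maximum principle); dichotomy is excluded by strict subadditivity of the Euclidean variational problem, itself a consequence of the uniqueness up to translation of $\phi$. Once strong convergence is established, the remaining steps reduce to routine bookkeeping of the scaling exponents and the $|\S|$-factors introduced by the probability normalization of the measure on the cylinder.
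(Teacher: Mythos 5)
Your proposal is correct in outline and reaches the theorem by the same final bookkeeping as the paper (the reduction to $\tau(\mu)\sim\frac{\vartheta}{1-\vartheta}\,\mu$ and $\nu(\mu)\sim\frac{(1-\vartheta)^{\vartheta}}{\vartheta^{\vartheta}\,\mathsf K_{\rm GN}}\,\mu^{-\vartheta}$, then eliminating $\mu$ in favour of $\Lambda^\theta(\mu)$), but it justifies the key scaling identities differently. The paper does not redo any blow-up analysis: it quotes \cite[Theorem 1.2]{Catrina-Wang-01} for the asymptotics $\mathcal Q_\mu[u_\mu]=(\tau(\mu)+\mu)\,\nu(\mu)\sim\mathsf S_p(\R^d)\,\mu^{1-\vartheta}$ and then reads off $\tau(\mu)/\mu\to\vartheta/(1-\vartheta)$ from the optimality of the scaling parameter $\lambda_\star$ in the Gagliardo--Nirenberg problem, together with the identity $\mathsf K_{\rm GN}^{-1}=\vartheta^\vartheta(1-\vartheta)^{1-\vartheta}\,\mathsf S_p(\R^d)$. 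You instead re-prove this imported ingredient from scratch: rescaling $u_\mu$ at scale $1/\sqrt\mu$ around its maximum, showing by concentration-compactness plus Kwong/Gidas--Ni--Nirenberg uniqueness that the blow-up limit is the Euclidean ground state $\phi$ of $-\Delta v+v=v^{p-1}$, and then using Pohozaev and equality in \eqref{Ineq:GNS} to compute the limiting ratios (your $|\S|$-factor and exponent bookkeeping are correct, and your Pohozaev computation of $\|\nabla\phi\|_2^2/\|\phi\|_2^2=\vartheta/(1-\vartheta)$ is exactly the paper's $\lambda_\star=1$ condition in disguise). This costs more work --- the compactness step you flag as ``the hard part'' is essentially the content of the cited Catrina--Wang theorem, and as written it is only a sketch (strong convergence of all three norms is genuinely needed, the matching upper bound from a rescaled test bubble must be made precise to rule out dichotomy, and note that the simpler observation $\mu\,u_{\max}\le u_{\max}^{p-1}$ at an interior maximum already gives $v_\mu(0)\ge1$ and rules out vanishing) --- but it buys a self-contained argument that yields the separate asymptotics of $\tau$ and $\nu$ directly, a point the paper treats somewhat informally. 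Also note that the subcritical machinery you invoke is legitimate here because $\theta>\vartheta(p,d)$ together with $\theta\le1$ forces $\vartheta(p,d)<1$, i.e.\ $p<2^*$.
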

The case $\theta=1$ has been established in~\cite[Theorem 1.2]{Catrina-Wang-01} and here we generalize it to the case $\theta<1$. The proof will be given in Section~\ref{Sec:KnownAsymp}.

\medskip Next we denote by $\mathsf K_{\rm CKN}^*(\theta,\Lambda,p)$ the optimal constant in \eqref{CKNthetaC} when the set of admissible functions is restricted to all symmetric functions in $\mathcal D$, \emph{i.e.} functions which depend only on $s$ and achieve their extremum at $s=0$. It is achieved by an explicit function $u_{\mu,*}$ with $\mu$ such that $\Lambda_*^\theta(\mu)=\Lambda$. We recall that  $\mathsf K_{\rm CKN}^*(\theta,\Lambda,p)$ is explicit (see \cite[Lemma~3]{DDFT}). Let us define
\be{Eqn:LambdaFS}
\Lambda_{\rm FS}(p,\theta):=4\,\frac{d-1}{p^2-4}\,\frac{(2\,\theta-1)\,p+2}{p+2}\,.
\ee
Symmetry breaking means $\mathsf K_{\rm CKN}^*(\theta,\Lambda,p)<\mathsf K_{\rm CKN}(\theta,\Lambda,p)$. It is known that
\begin{enumerate}
\item $\mathsf K_{\rm CKN}^*(\theta,\Lambda,p)=\mathsf K_{\rm CKN}(\theta,\Lambda,p)$ for $\Lambda>0$ small (see \cite{DEL2011}).
\item $\mathsf K_{\rm CKN}^*(\theta,\Lambda,p)<\mathsf K_{\rm CKN}(\theta,\Lambda,p)$ for $\Lambda>\Lambda_{\rm FS}(p,\theta)$  (see~\cite{Felli-Schneider-03, Catrina-Wang-01}).
\item The map $\Lambda\mapsto1/\mathsf K_{\rm CKN}(\theta,\Lambda,p)$ is increasing and concave when $\theta=1$.
\end{enumerate}
The value $\Lambda=\Lambda_{\rm FS}(p,\theta)$ corresponds to the threshold of instability of the symmetric minimizers of \eqref{Eqn:Q}. More estimates will be given in Section~\ref{Sec:KnownAsymp}. Our next purpose is to study the bifurcation of non-symmetric minimizers from the symmetric ones. Let us start with $\theta=1$ and define
\[
\mu_{\rm FS}:=\Lambda_{\rm FS}(p,1)\,.
\]
\begin{theorem}\label{Thm:T1} Assume that $\theta=1$, $d\ge3$ and $p\in (2, 2^* ]$. Under assumption \emph{(H)}, there exist a constant $c_{p,d}$ and
\be{ansatz-intro}
u_{(\mu)}:=u_{\mu,*}+\sqrt{c_{p,d}\,(\mu-\mu_{\rm FS})}\,\varphi+c_{p,d}\,(\mu-\mu_{\rm FS})\,\psi\,,
\ee
where $\varphi$ and $\psi$ are two smooth functions with exponential decay as $|s|\to\infty$ such that, for $c_{p,d}\,(\mu-\mu_{\rm FS})>0$, 
\[
\mathcal Q_\mu[u_{(\mu)}]=\mathcal Q_\mu[u_{\mu,*}]\(1-\frac{p^2-4}8\,c_{p,d}\,(\mu-\mu_{\rm FS})^2+o\((\mu-\mu_{\rm FS})^2\)\)\,.
\]
Moreover, if $c_{p,d}$ is positive, then for $\mu>\mu_{\rm FS}$, $\mathcal Q_\mu[u_{(\mu)}]$ minimizes $\mathcal Q_\mu$ in a neighborhood of~$u_{\mu,*}$ among smooth functions with exponential decay as $|s|\to\infty$, up to terms of order $o\((\mu-\mu_{\rm FS})^2\)$.
\end{theorem}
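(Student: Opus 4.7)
The plan is a Lyapunov--Schmidt reduction at the first bifurcation point $\mu_{\rm FS}$ on the symmetric branch. The linearization of \eqref{Euler-thetaequal1} at $u_{\mu,*}$ is
\[
L_\mu\,\eta:=-\,\Delta\,\eta+\mu\,\eta-(p-1)\,u_{\mu,*}^{p-2}\,\eta\,.
\]
Since $u_{\mu,*}$ depends only on the axial variable $s$, $L_\mu$ respects the decomposition of $L^2(\mathcal C)$ into spherical-harmonic sectors $L^2(\R)\otimes\mathcal H_k$, and on each sector it reduces to a one-dimensional Schr\"odinger operator of P\"oschl--Teller type, explicitly diagonalizable because $u_{\mu,*}$ is known in closed form. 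The threshold $\mu_{\rm FS}$ is by definition the smallest $\mu$ at which the restriction of $L_\mu$ to the $k=1$ sector acquires a zero ground-state eigenvalue. Fixing a first spherical harmonic $Y(\omega)$ on $\S$ and the corresponding ground state $g(s)$, I set $\varphi(s,\omega):=g(s)\,Y(\omega)\in\ker L_{\mu_{\rm FS}}$; this selects the bifurcation direction.

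I would then substitute the ansatz $u_{(\mu)}=u_{\mu,*}+\eps\,\varphi+\eps^2\,\psi$ with $\eps^2=c_{p,d}\,(\mu-\mu_{\rm FS})$ into \eqref{Euler-thetaequal1} and match orders in $\eps$, using that $u_{\mu,*}$ itself solves \eqref{Euler-thetaequal1}. The $\eps$-order equation reduces to $L_{\mu_{\rm FS}}\varphi=0$ modulo $O(\mu-\mu_{\rm FS})=O(\eps^2)$ corrections, automatic by the choice of $\varphi$. The $\eps^2$-order equation reads
\[
L_{\mu_{\rm FS}}\,\psi=\frac{(p-1)(p-2)}{2}\,u_{\mu_{\rm FS},*}^{p-3}\,\varphi^2\,.
\]
Since $Y^2\in\mathcal H_0\oplus\mathcal H_2$ contains no first harmonic, the right-hand side is orthogonal to $\ker L_{\mu_{\rm FS}}$, so $\psi$ is uniquely determined in the orthogonal complement by Fredholm alternative and lives in the $k=0$ and $k=2$ sectors. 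The $\eps^3$-order solvability condition, obtained by projecting onto $\varphi$ (and gathering contributions from the $(\mu-\mu_{\rm FS})$-expansion of both $L_\mu$ and $u_{\mu,*}$), then fixes $c_{p,d}$ as an explicit quotient of one-dimensional integrals of the P\"oschl--Teller ground states.

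For the Rayleigh quotient, I would Taylor-expand $\mathcal Q_\mu[u_{\mu,*}+\eta]$ with $\eta=\eps\,\varphi+\eps^2\,\psi$ around the critical point $u_{\mu,*}$. The linear term vanishes. Several potential $\eps^3$ contributions vanish by parity: the quadratic cross term reduces to $\langle L_\mu\varphi,\psi\rangle$, which is zero because $\varphi$ and $\psi$ lie in orthogonal spherical-harmonic sectors, and the cubic piece $\int u_{\mu,*}^{p-3}\,\varphi^3$ vanishes because $Y^3$ is odd on $\S$. The surviving $\eps^4$ contributions come from the shift of the eigenvalue of $L_\mu$ on the $k=1$ sector (linear in $\mu-\mu_{\rm FS}$, hence contributing $\eps^2\cdot(\mu-\mu_{\rm FS})=\eps^4$), from $\langle L_\mu\psi,\psi\rangle$, and from the quartic and mixed-cubic pieces. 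Evaluating these using the explicit P\"oschl--Teller eigenfunctions yields the coefficient $-(p^2-4)/8$.

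The main obstacle is the explicit evaluation of the one-dimensional integrals that produce both $c_{p,d}$ and the coefficient $-(p^2-4)/8$; this requires writing $g(s)$ and the two harmonic components of $\psi$ in closed form in terms of hyperbolic functions. For the minimization assertion, when $c_{p,d}>0$ and $\mu>\mu_{\rm FS}$ the quadratic form $\langle L_\mu\cdot,\cdot\rangle$ on the $k=1$ sector has a single negative eigenvalue of order $\mu-\mu_{\rm FS}$ with unique unstable direction $\varphi$, while it remains coercive on all other spherical-harmonic sectors (the translation mode in $s$ is a symmetry which can be factored out). The ansatz moves along the unstable direction $\varphi$ at the precise amplitude $\eps$ that minimizes the reduced functional to leading order, so $u_{(\mu)}$ locally minimizes $\mathcal Q_\mu$ modulo $o((\mu-\mu_{\rm FS})^2)$.
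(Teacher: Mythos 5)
Your Lyapunov--Schmidt route (solve the Euler--Lagrange equation order by order, fix $c_{p,d}$ by the cubic solvability condition, then evaluate the quotient) is a legitimate alternative to the paper's argument, which instead expands $\mathcal Q_\mu$ to fourth order in the two-parameter ansatz $u_{\mu,*}+\eps\,\varphi+\eta\,\psi$, determines $\psi$ by minimizing the reduced functional $Q[\psi]-L[\psi]$, sets $\eta=\eps^2$ and then optimizes in $\eps$. The two constructions are equivalent: since $L_\mu u_{\mu,*}=-(p-2)\,u_{\mu,*}^{p-1}$, the extra source terms in the paper's equation for $\psi$ (those proportional to $u_{\mu,*}^{p-1}f_0$, including the $k_\psi$ term) amount to adding a multiple of $u_{\mu,*}$ to your $\psi$, which only renormalizes $u_{(\mu)}$ and does not change $\mathcal Q_\mu$ at order $\eps^4$; and your $\eps^3$ solvability condition is exactly stationarity in $\eps$ of the reduced energy $\lambda_1(\mu)\,\eps^2+\big(b-\tfrac12 L[\psi]\big)\eps^4$, so it yields the same $c_{p,d}$. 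Two small points: the nondegeneracy you use implicitly when you ``fix $c_{p,d}$ as a quotient'' is precisely assumption (H), which should be stated; and at order $\eps^2$ the source must be checked orthogonal not only to $\varphi$ but also to the translation mode $\partial_s u_{\rm FS}\in\ker L_{\mu_{\rm FS}}$ in the $k=0$ sector (true, by parity in $s$, but it is part of the Fredholm step).

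The genuine gap is in your final minimality paragraph. It is false that $\langle L_\mu\,\cdot,\cdot\rangle$ is coercive on all sectors $k\neq1$: on the $k=0$ sector the operator $-\partial_s^2+\mu-(p-1)\,u_{\mu,*}^{p-2}$ has ground state $u_{\mu,*}^{p/2}$ with eigenvalue $\mu\,\big(1-\tfrac{p^2}4\big)<0$, an order-one negative eigenvalue, in addition to the zero translation mode. Local minimality of $u_{\mu,*}$ in the symmetric class is a property of the second variation of the \emph{quotient} $\mathcal Q_\mu$, not of the raw form: the $\L^p$ normalization produces the rank-one correction $(p-2)\,\big(\inC{u_{\mu,*}^{p-1}\,\eta}\big)^2/\nrmC{u_{\mu,*}}p^{2\,p}$ --- exactly the term the paper retains in its order-two expansion and, through $(p-2)\,k_\psi^2$, in $Q[\psi]$ --- and it is this correction that compensates the negative $k=0$ direction of $L_\mu$. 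Your stability argument must therefore be run on the second variation of $\mathcal Q_\mu$ (equivalently on the constraint set $\nrmC up$ fixed, modulo the scaling invariance $u\mapsto c\,u$), not on $L_\mu$ alone. In addition, in the full space the $k=1$ sector carries a $d$-dimensional family of near-null directions $g(s)\,Y_j(\omega)$, so the claim of ``a single negative eigenvalue with unique unstable direction $\varphi$'' only holds after reducing to functions of $(s,\zeta)$, which requires invoking the foliated Schwarz symmetry result used in the paper (or otherwise handling the rotational degeneracy). With these corrections the minimality statement, up to $o\big((\mu-\mu_{\rm FS})^2\big)$, goes through as you intend; as written, the argument does not.
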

The assumption {\rm (H)} is rather technical but explicit and will be stated only in Section~\ref{Sec:Check}. For a given $d$, it is a condition on $p$, which ensures the existence on  $c_{p,d}$. Notice that the condition that $c_{p,d}$ is positive is stronger than {\rm (H)}. We are not able to fully characterize the positivity of $c_{p,d}$, but at least we will give a sufficient condition in Theorem~\ref{Thm:Check}. The corresponding range of $p$ is not expected to be optimal.

In Section~\ref{Sec:Bifurcation} we shall perform an expansion of the energy $\mathcal Q_\mu$ in a neighborhood of the first bifurcation  point on the symmetric branch, by minimizing $\mathcal Q_\mu$ among a special class of smooth functions with exponential decay, which is expected to contain all minimizers in $\H^1(\mathcal C)$. However we did not prove that such a regularity result holds order by order in the expansion. Anyway, our expansion provides us with an approximate, local minimizer under the condition that $c_{p,d}$ is positive.

The function $\varphi$ in Theorem \ref{Thm:T1} is explicit, $\psi$ is given by a linear elliptic equation with an explicit source term and $c_{p,d}$ is given by an identity involving $\varphi$ and $\psi$. Numerically, $c_{p,d}$ is positive in all cases considered in Section~\ref{Sec:NumericsAndScenarios}. Our last result is also written for $u_{(\mu)}$. With a slight abuse of notations, we may still use $\tau$ and $\nu$ for the approximated solution, that is
\[\mathrm{(A)}\hspace*{2cm}
\tau(\mu):=t[u_{(\mu)}]\quad\mbox{and}\quad\nu(\mu):=\frac{\nrmcnd{u_{(\mu)}}2^2}{\nrmcnd{u_{(\mu)}}p^2}\,,
\]
and then redefine $\Lambda^\theta(\mu)$ and $J^\theta(\mu)$ accordingly. Of course it is to be expected that the new versions of $\tau$ and $\nu$ differ from the former ones only by higher order terms but mathematically this is an open question. As we shall see in Section~\ref{Sec:NumericsAndScenarios} and with the above definition (A), $\tau'(\mu_{\rm FS})$ can be computed in terms of $c_{p,d}$. Let us define
\[
\vartheta_2(p,d):=\frac{\tau'(\mu_{\rm FS})}{1+\tau'(\mu_{\rm FS})}\,.
\]
One can expect that the value of $\vartheta_2(p,d)$ is the same if $\tau$ is computed  on the basis of $u_\mu$, the solution to the Euler-Lagrange equation~\eqref{Euler-thetaequal1}, or of $u_{(\mu)}$, the approximation defined by \eqref{ansatz-intro}. The relative values of $\vartheta_2(p,d)$ and $\vartheta(p,d)$ determine the behavior of the non-symmetric branch close to the bifurcation point on the symmetric branch. More precisely, we have the following local result.
\begin{theorem}\label{Thm:T2} Under the assumptions of Theorem~\ref{Thm:T1} and definition (A), if $c_{p,d}$ is positive, if $u_{(\mu)}$ is given by~\eqref{ansatz-intro} and if $\mu$ is taken in a right neighborhood of $\mu_{\rm FS}$, then we have the following alternative.\begin{description}
\item[$\bullet$] 
Either $\vartheta_2(p,d)\le\vartheta(p,d)$ and then for all $\theta\in(\vartheta(p,d),1]$, the branch $ (\Lambda^\theta(\mu), J^\theta(\mu))$ is concave, nondecreasing in $\mu$ and it is below the symmetric branch $(\Lambda_*^\theta(\mu), J_*^\theta(\mu))$. 
\item[$\bullet$] Or, on the contrary, $\vartheta_2(p,d)>\vartheta(p,d)$ and then we find two different behaviors:\begin{description}
\item[-] 
if $\theta\in(\vartheta_2(p,d),1]$, the branch is concave, nondecreasing in $\mu$ and below the symmetric branch,
\item[-] 
if $\theta\in(\vartheta(p,d),\vartheta_2(p,d))$, then the branch $(\Lambda^\theta(\mu),J^\theta(\mu))$ is above the symmetric branch $(\Lambda_*^\theta(\mu),J_*^\theta(\mu))$ and $\frac d{d\mu}\,\Lambda^\theta(\mu_{\rm FS})<0$.
\end{description}
\end{description}
\end{theorem}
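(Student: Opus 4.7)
The plan is to compute the right-sided tangent vector of the non-symmetric branch at the bifurcation point, compare it with the tangent vector of the symmetric branch, and use the energy comparison from Theorem~\ref{Thm:T1} to determine the relative position of the two curves. The starting observation is that by \eqref{ansatz-intro} one has $u_{(\mu_{\rm FS})}=u_{\mu_{\rm FS},*}$, so $\tau(\mu_{\rm FS})=\tau_*(\mu_{\rm FS})$, $\nu(\mu_{\rm FS})=\nu_*(\mu_{\rm FS})$, and the two branches share the common point $(\Lambda_{\rm FS}(p,\theta),J_*^\theta(\mu_{\rm FS}))$ at $\mu=\mu_{\rm FS}$. All subsequent information is encoded in the right derivatives of $\tau$ and $\nu$ at $\mu_{\rm FS}$.

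The first step is to expand $\nrmC{u_{(\mu)}}2^2$, $\nrmC{\nabla u_{(\mu)}}2^2$ and $\nrmC{u_{(\mu)}}p^2$ around $\mu_{\rm FS}$ using \eqref{ansatz-intro}, together with the fact that $\varphi$ is, by its construction in Section~\ref{Sec:Bifurcation} as an eigenfunction of the linearized operator at $u_{\mu_{\rm FS},*}$, orthogonal to $u_{\mu_{\rm FS},*}$. The square-root terms in \eqref{ansatz-intro} then contribute only at order $\mu-\mu_{\rm FS}$ once integrated against $u_{\mu_{\rm FS},*}$, and one obtains $\tau'(\mu_{\rm FS})$ as an explicit quantity involving $c_{p,d}$, $\varphi$ and $\psi$. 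Differentiating $\Lambda^\theta(\mu)=\theta\,\mu-(1-\theta)\,\tau(\mu)$ yields
\[
\frac{d\Lambda^\theta}{d\mu}(\mu_{\rm FS})=\theta-(1-\theta)\,\tau'(\mu_{\rm FS})=\bigl(1+\tau'(\mu_{\rm FS})\bigr)\bigl(\theta-\vartheta_2(p,d)\bigr)\,,
\]
so its sign is precisely that of $\theta-\vartheta_2(p,d)$, which sets up the dichotomy of the theorem. An analogous scaling computation on the one-dimensional symmetric soliton of $-u''+\mu\,u=u^{p-1}$ yields $\tau_*(\mu)=\frac{p-2}{4}\,\mu$, and elementary arithmetic shows that $\theta-(1-\theta)\,\tau_*'(\mu_{\rm FS})>0$ whenever $\theta>\vartheta(p,d)$ and $d\ge 3$; hence the symmetric branch is strictly increasing in $\mu$ throughout the relevant range, which provides an unambiguous reference orientation.

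For the vertical comparison, I would use the identity $J^\theta(\mu)=\nu(\mu)\,\theta^\theta\,(\mu+\tau(\mu))^\theta$ together with the quadratic defect of Theorem~\ref{Thm:T1}, which at $\theta=1$ gives $J^1(\mu)<J_*^1(\mu)$ for $\mu>\mu_{\rm FS}$ with a leading correction proportional to $-c_{p,d}\,(\mu-\mu_{\rm FS})^2$. When $\theta>\vartheta_2(p,d)$, the map $\mu\mapsto\Lambda^\theta(\mu)$ is locally a diffeomorphism onto a right neighborhood of $\Lambda_{\rm FS}(p,\theta)$, both branches can be reparametrized by $\Lambda$, and the inequality transfers to $J^\theta(\Lambda)<J_*^\theta(\Lambda)$. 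When instead $\theta\in(\vartheta(p,d),\vartheta_2(p,d))$, the derivative computed above is negative, so for $\mu>\mu_{\rm FS}$ the non-symmetric branch moves into the region $\Lambda<\Lambda_{\rm FS}(p,\theta)$; comparing at the same $\Lambda$ requires evaluating the symmetric branch at $\mu<\mu_{\rm FS}$, and the resulting reversal of orientation flips the sign of the inequality to $J^\theta(\Lambda)>J_*^\theta(\Lambda)$, as claimed.

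Concavity of the parametric curve reduces, through $\tfrac{d^2 J^\theta}{d(\Lambda^\theta)^2}=(J^{\theta\prime\prime}\,\Lambda^{\theta\prime}-J^{\theta\prime}\,\Lambda^{\theta\prime\prime})/(\Lambda^{\theta\prime})^3$, to the sign of the second right derivatives of $\tau$ and $\nu$ at $\mu_{\rm FS}$. This is the main technical obstacle of the proof: it forces the expansion \eqref{ansatz-intro} to be controlled one order further, and it is precisely at this point that the positivity hypothesis on $c_{p,d}$ is used, ensuring that the quadratic defect of $\mathcal Q_\mu$ produced by Theorem~\ref{Thm:T1} has the expected sign. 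Once this sign check has been performed, the three alternative conclusions of the theorem follow mechanically by combining the dichotomy on $\theta-\vartheta_2(p,d)$ with the vertical comparison described above.
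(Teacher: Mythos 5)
Your first step (the sign of $\frac d{d\mu}\Lambda^\theta(\mu_{\rm FS})$ being that of $\theta-\vartheta_2(p,d)$, via $\tau'(\mu_{\rm FS})$) agrees with the paper, up to a slip: the symmetric branch satisfies $\tau_*(\mu)=\frac{p-2}{p+2}\,\mu$, not $\frac{p-2}{4}\,\mu$. The genuine gap is in the vertical comparison. The inequality of Theorem~\ref{Thm:T1} compares $u_{(\mu)}$ and $u_{\mu,*}$ at the \emph{same} $\mu$, hence at \emph{different} abscissae $\Lambda^\theta(\mu)\neq\Lambda_*^\theta(\mu)$ when $\theta<1$ (their difference is of first order in $\mu-\mu_{\rm FS}$, since $\tau'(\mu_{\rm FS})\neq\tau_*'(\mu_{\rm FS})$ as soon as $c_{p,d}\neq0$). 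Moreover the two curves are \emph{tangent} in the $(\Lambda^\theta,J^\theta)$ plane at the bifurcation point: this is the content of the paper's Lemma~\ref{Claim1}, which rests on the identity of Lemma~\ref{Rem2} relating $\nu'(\mu_{\rm FS})$, $\tau'(\mu_{\rm FS})$ to their symmetric counterparts. Because of tangency, whether the non-symmetric branch lies above or below at the same $\Lambda$ is decided only at second order, by the sign of the curvature difference; your ``the inequality transfers by a local diffeomorphism'' and ``reversal of orientation flips the sign'' arguments cannot see this, and in particular cannot explain why the flip of the above/below relation occurs exactly at $\theta=\vartheta_2(p,d)$ rather than at some other value — that coincidence is a quantitative fact, not a consequence of the change of orientation of $\mu\mapsto\Lambda^\theta(\mu)$.

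Concretely, the paper resolves this by introducing $\xi^\theta$, essentially $\frac{d^2J^\theta}{d(\Lambda^\theta)^2}-\frac{d^2J_*^\theta}{d(\Lambda_*^\theta)^2}$ up to a positive factor, and computing $\xi^\theta(\mu_{\rm FS})$ in closed form from the second-order energy expansion~\eqref{Eqn:ExpansionEnergy} together with $\tau'(\mu_{\rm FS})$ and $\nu'(\mu_{\rm FS})$. Two points you would have to supply are missing in your plan: first, the terms in $\tau''(\mu_{\rm FS})$ cancel in $\xi^\theta(\mu_{\rm FS})$, so no control of the \emph{Ansatz} ``one order further'' is needed — the obstacle you identify as the main technical difficulty is in fact bypassed, and your proposal gives no route to the sign of $\tau''$, $\nu''$ anyway; second, the identities $\xi^{\vartheta_2(p,d)}(\mu_{\rm FS})=0$ (a consequence of part~(ii) of Theorem~\ref{Thm:T1bis}, i.e.\ of the explicit relation between $\tau'(\mu_{\rm FS})$ and $c_{p,d}$) and the monotonicity of $\theta\mapsto\xi^\theta(\mu_{\rm FS})$ are what make the sign of the curvature difference, hence the above/below alternative and the local concavity, switch precisely at $\vartheta_2(p,d)$. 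Without these computations the decisive ``sign check'' you defer to the end is not performed, so the proof is incomplete at exactly the point where the theorem's content lies.
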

In the last case, when $\theta\in(\vartheta(p,d),\vartheta_2(p,d))$, the branch in the $(\Lambda^\theta,J^\theta)$ representation is on the left of the bifurcation point and above the curve corresponding to symmetric solutions. The case $\theta=\vartheta(p,d)$ is of particular interest and will be discussed in details from a numerical point of view in Section~\ref{Sec:NumericsAndScenarios}.

\section{Preliminaries observations and proof of Theorem~\ref{Thm:asymptott}}\label{Sec:KnownAsymp}

\subsection{Caffarelli-Kohn-Nirenberg inequalities: more details on symmetry breaking}\label{Sec:CKN}

Recall that the threshold value for the stability of symmetric optimal functions is given by $\Lambda_{\rm FS}(p,\theta)$ defined in \eqref{Eqn:LambdaFS}: symmetry breaking occurs for any $\Lambda>\Lambda_{\rm FS}(p,\theta)$ according to~\cite{Felli-Schneider-03,DDFT} (also see~\cite{Catrina-Wang-01} for previous results and~\cite{MR2437030} if $d=2$ and $\theta=1$). As shown in~\cite{MR2437030,DELT09,springerlink:10.1007/s00526-011-0394-y}, there is a continuous curve $p\mapsto\Lambda_{\rm s}(p)$ with $\lim_{p\to2_+}\Lambda_{\rm s}(p)=\infty$ and $\Lambda_{\rm s}(p)>a_c^2$ for any $p\in (2,2^*)$ such that symmetry holds for any $\Lambda\le\Lambda_{\rm s}$ and there is symmetry breaking if $\Lambda>\Lambda_{\rm s}$. As proved in \cite{DEL2011}, for all $p,d$ in the considered range, $\theta=1$,
\[
\frac{(d-1)\,(6-p)}{4\,(p-2)} < \Lambda_{\rm s}(p) \le \Lambda_{\rm FS}(p,1)\,.
\]
Despite the fact that the exact shape of $\Lambda_s$ is not known, it can be proved that we have $\lim_{p\to2^*}\Lambda_{\rm s}(p)=a_c^2$ if $d\ge3$ and, if $d=2$, $\lim_{p\to\infty}\Lambda_{\rm s}(p)=0$, or more precisely, $\lim_{p\to\infty}p^2\Lambda_{\rm s}(p)=4$. Moreover, we also know from \cite[Theorem~3.1]{MR1734159} that symmetry holds if $\Lambda\le d^2/p^2$.

According to~\cite{1005}, it is known that an optimal function exists for any $\theta\in(\vartheta(p,d),1)$, but only if $\mathsf K_{\rm CKN}(\theta,\Lambda,p)>\mathsf K_{\rm GN}(p,d)$ when $\theta=\vartheta(p,d)$, where $\mathsf K_{\rm GN}(p,d)$ is the optimal constant in the  Gagliardo-Nirenberg-Sobolev inequality~\eqref{Ineq:GNS}. The case $\mathsf K_{\rm CKN}(\vartheta(p,d),\Lambda,p)=\mathsf K_{\rm GN}(p,d)$ has not been studied yet. A sufficient condition for the existence of extremals can be deduced, by comparison with symmetric functions, namely $\mathsf K_{\rm CKN}^*(\vartheta(p,d),\Lambda,p)>\mathsf K_{\rm GN}(p,d)$, which can be rephrased in terms of $\Lambda$ as $\Lambda<\Lambda_{\rm GN}^*(p,d)$ for some non-explicit (but easy to compute numerically) function $p\mapsto\Lambda_{\rm GN}^*(p,d)$. When $\theta=\vartheta(p,d)$ and $\Lambda>\Lambda_{\rm GN}^*(p,d)$, extremal functions (if they exist) cannot be symmetric and in the asymptotic regime $p\to2_+$, this condition is weaker than $\Lambda>\Lambda_{\rm FS}(p,\vartheta(p,d))$. One can indeed prove that $\lim_{p\to2_+}\Lambda_{\rm FS}(p,\vartheta(p,d))>\lim_{p\to2_+}\Lambda_{\rm GN}^*(p,d)$. Hence, for $\theta\in(\vartheta(p,d),1]$, close enough to $\vartheta(p,d)$ and $p-2>0$, small (but numerically not so small, actually, as shown in~\cite{Oslo}; also see \cite[Section 5]{springerlink:10.1007/s00526-011-0394-y} for estimates and Section~\ref{Sec:BifurcationsQualitativeDependence} for some plots), optimal functions exist and are not symmetric if $\Lambda>\Lambda_{\rm GN}^*(p,d)$, which is again a less restrictive condition than $\Lambda>\Lambda_{\rm FS}(p,\theta)$. See~\cite{springerlink:10.1007/s00526-011-0394-y} for proofs and~\cite{Oslo} for a more detailed overview of known results.

In this paper we study perturbatively the non-symmetric solutions lying in the first branch bifurcating from the branch of \emph{symmetric extremals} and show that they explain all phenomena of symmetry breaking known or observed so far, including cases in which the symmetric extremals are stable. Of course, it is not clear that all extremals for Caffarelli-Kohn-Nirenberg inequalities lie in those branches, even if probably that is the case. In Section~\ref{Sec:Bifurcation} we will provide a complete description of the branch around the bifurcation point, based on an asymptotic expansion. This clarifies the local behavior of the branch and accounts for all phenomena numerically observed in~\cite{Freefem}. Before doing so, let us study the branch of symmetric solutions and the asymptotic behavior of the branch of optimal functions (proof of Theorem~\ref{Thm:asymptott}).

\subsection{The case of symmetric extremals}\label{Sec:Prelim}

We start with the symmetric case for $\theta=1$ and adapt the computations that can be found in~\cite{DDFT} (also see~\cite{DEL2011} and the Appendix). Consider the equation
\be{Eqn:ODE}
-(p-2)^2\,w''+4\,w-2\,p\,|w|^{p-2}\,w=0\quad\mbox{in}\quad \R\,.
\ee
The function $w(s):=(\cosh s)^{-\frac2{p-2}}$ is, up to translations, the unique positive solution of~\eqref{Eqn:ODE}. As a consequence, the function $u(s)=\big(\frac12\,p\,\mu\big)^{1/(p-2)}\,w\(\frac{p-2}2\,\sqrt{\mu}\,s\)$ is the unique solution of
\be{eq-ustar}
-\,u''+\mu\,u=|u|^{p-2}\,u\quad\mbox{in}\quad \R\,.
\ee

 The symmetric optimal function $u_*$ for $\theta<1$ can be explicitly computed. Up to multiplication by a constant, $u_*$ solves
\[\label{eq-ustar-theta}
-\,\theta\,u_*''+\eta\,u_*=u_*^{p-1}\,,
\]
with $\eta=(1-\theta)\,t[u_*]+\Lambda$. After multiplying the above equation by $u_*$, integrating with respect to $s\in\R$ and dividing by $\int_{\R} u_*^2\,ds$, we find
\[
t[u_*]+\Lambda=\frac{\int_{\R} u_*^p\,ds}{\int_{\R} u_*^2\,ds}\,,
\]
where $u_*(s)=A\,w(B\,s)$, for all $s\in\R$, $w$ solves \eqref{Eqn:ODE}, $A=\(\frac{p\,\eta}2\)^\frac 1{p-2}$ and $B=\frac{p-2}2\,\sqrt{\frac\eta\theta}$. From this expression, as in~\cite{DEL2011}, we deduce that
\[
t[u_*]=B^2\,\frac{\mathsf J_2}{\mathsf I_2}=\frac{p-2}{p+2}\,\frac\eta\theta\quad\mbox{and}\quad\frac{\int_{\R} u_*^p\,ds}{\int_{\R} u_*^2\,ds}=A^{p-2}\,\frac{\mathsf I_p}{\mathsf I_2}=\frac{2\,p\,\eta}{p+2}\,,
\]
where for all $q\ge 2$, $\mathsf I_q:=\int_{\R}|w(s)|^q\,ds$, and $\mathsf J_2:=\int_{\R}|w'(s)|^2\;ds$ (see \ref{A1} for details). This provides the identity
\[
\frac{p-2}{p+2}\,\frac\eta\theta+\Lambda=\frac{2\,p\,\eta}{p+2}
\]
and uniquely determines $\eta=\frac{(p+2)\,\theta}{(2\,\theta-1)\,p+2}\,\Lambda$. As a consequence, we have
\[
t[u_*]=\frac{p-2}{(2\,\theta-1)\,p+2}\,\Lambda\,.
\]

\subsection{Gagliardo-Nirenberg inequalities and the corresponding asymptotic regime}\label{Sec:Gagliardo-Nirenberg}

Now we investigate the asymptotic regimes corresponding to $\Lambda\to\infty$ and prove Theorem~\ref{Thm:asymptott}. Let
\[
\mathsf S_p(\R^d):=\inf_{u\in\H^1(\R^d)\setminus\{0\}}\frac{\ird{|\nabla u|^2}+\ird{|u|^2}}{|\S|^\frac{p-2}p\,\(\ird{|u|^p}\)^\frac 2p}\,.
\]
An optimization of the quotient in the expression of $\mathsf S_p(\R^d)$ allows to relate this constant with $\mathsf K_{\rm GN}$. Indeed, if we optimize $\mathcal N[u]:=\ird{|\nabla u|^2}+\mu\ird{|u|^2}$ under the scaling $\lambda\mapsto u_\lambda(x):=\lambda^\frac dp\,u(\lambda\,x)$, we find that
\[
\mathcal N[u_\lambda]=\lambda^{2\,(1-\vartheta)}\ird{|\nabla u|^2}+\lambda^{-2\,\vartheta}\,\mu\ird{|u|^2}
\]
achieves its minimum at $\lambda_\star=\sqrt{\frac{\vartheta\,\mu}{1-\vartheta}}\,\frac{\nrm u2}{\nrm{\nabla u}2}$, so that
\[
\mathcal N[u_{\lambda_\star}]=\vartheta^{-\vartheta}\,(1-\vartheta)^{-(1-\vartheta)}\,\nrm{\nabla u}2^{2\,\vartheta}\nrm u2^{2\,(1-\vartheta)}\,\mu^{1-\vartheta}\,,
\]
thus proving that, with the choice $\mu=1$, $\mathsf K_{\rm GN}^{-1}=\vartheta^\vartheta\,(1-\vartheta)^{1-\vartheta}\,\mathsf S_p(\R^d)
$. For any $\mu>0$, if $u_\mu$ is the solution of \eqref{Euler-thetaequal1} and if it is a minimizer of $1/\mathsf K_{\rm CKN}(1,\Lambda,p)$, we know from~\cite[Theorem 1.2]{Catrina-Wang-01} that as $\mu\to\infty$,
\[
\big(\tau(\mu)+\mu\big)\,\nu(\mu)=\mathcal Q_\mu[u_\mu]\sim\mathsf S_p(\R^d)\,\mu^{1-\vartheta}\,.
\]
If $u$ is an optimal function for $\mathsf S_p(\R^d)$, we also know from the above computations that $\lambda_\star=1$, that is,
\[
1=\lambda_\star^2=\frac{\vartheta\,\mu}{1-\vartheta}\,\frac 1{\tau(\mu)}\quad \mbox{and so}\,,\quad \frac{\tau(\mu)}\mu=\frac\vartheta{1-\vartheta}\,.
\]
Hence,
\[
\nu(\mu)\sim(1-\vartheta)\,\mathsf S_p(\R^d)\,\mu^{-\vartheta}\quad\mbox{as}\quad\mu\to\infty\,.
\]

Consider now the case $\theta>\vartheta(p,d)$. According to the parametrization of Section~\ref{Sec:Intro}, that is, by definition of $J^\theta$ and $\Lambda^\theta$, we obtain that
\[
\Lambda^\theta(\mu)=\theta\,\mu-(1-\theta)\,\tau(\mu)=\frac{\theta-\vartheta(p,d)}{1-\vartheta(p,d)}\,\mu\,,
\]
\[
J^\theta(\mu)=\nu(\mu)\,\theta^\theta\,(\mu+\tau(\mu))^\theta\sim\theta^\theta\,(1-\vartheta(p,d))^{1-\theta}\,\mathsf S_p(\R^d)\,\mu^{\theta-\vartheta(p,d)}\,,
\]
as $\mu\to\infty$. Hence the parametric curve $\mu\mapsto(\Lambda^\theta(\mu),J^\theta(\mu))$ is asymptotic to the curve
\[
\Lambda\mapsto\frac{\theta^\theta\,(1-\vartheta(p,d))^{1-\vartheta(p,d)}}{(\theta-\vartheta(p,d))^{\theta-\vartheta(p,d)}}\,\mathsf S_p(\R^d)\,\Lambda^{\theta-\vartheta(p,d)}\,,
\]
for large values of $\mu$. This completes the proof of Theorem~\ref{Thm:asymptott}. See Figs.~\ref{fig1}--\ref{fig3} for some plots of the curves $\mu\mapsto(\Lambda^\theta(\mu),J^\theta(\mu))$ for various values of $\theta$ and how these curves can be compared with the ones corresponding to the asymptotic regime as described in Theorem~\ref{Thm:asymptott}.\finprf

The limit case $\theta=\vartheta=\vartheta(p,d)$ is of particular interest. Indeed, according to~\cite{1005}, Gagliardo-Nirenberg inequalities play a special role. See Fig.~\ref{fig6}. First of all, since $1/\mathsf K_{\rm CKN}(\vartheta,\Lambda^\vartheta(\mu),p)\le J^\vartheta(\mu)$ and using the fact that $\Lambda\mapsto\mathsf K_{\rm CKN}(\vartheta,\Lambda,p)$ is a non-increasing function of $\Lambda$, we recover the known result that
\[
\mathsf K_{\rm GN}\le\mathsf K_{\rm CKN}(\vartheta(p,d),\Lambda,p)\quad\forall\,\Lambda>0\,.
\]
Such an inequality has deep implications on the existence of an optimal function (see~\cite{1005} and in particular \cite[Theorem~1.4]{1005}): either the inequality is strict and there exists a non-trivial optimal function for~\eqref{CKNtheta}, or there is equality and a non-trivial optimal function may exist only if $\Lambda=\inf\{\lambda>0\,:\,\mathsf K_{\rm GN}\ge\mathsf K_{\rm CKN}(\vartheta(p,d),\lambda,p)\}$, but certainly not for any larger value of $\Lambda$, if the above infimum is finite.

In our setting, we can define $\mu_{\rm GN}:=\inf\{\mu>0\,:\,J^\vartheta(\mu)\le\mathsf K_{\rm GN}\}$, with $\vartheta=\vartheta(p,d)$. It is granted that $\mu_{\rm GN}>0$. Either $\mu_{\rm GN}=\infty$ and there is always a minimizer, or
\[
\mathsf K_{\rm CKN}(\vartheta(p,d),\Lambda^\vartheta(\mu),p)=J^\vartheta(\mu)\quad\forall\,\mu\in(0,\mu_{\rm GN}]
\]
and there exists a non-trivial optimal function for~\eqref{CKNtheta} if $\mu<\mu_{\rm GN}$, while
\[
\mathsf K_{\rm CKN}(\vartheta(p,d),\Lambda^\vartheta(\mu),p)=\mathsf K_{\rm GN}\quad\forall\, \mu\in[\mu_{\rm GN},\infty) \]
and there is no optimal function for~\eqref{CKNtheta} if $\mu>\mu_{\rm GN}$.

\section{An expansion at the bifurcation point: proof of Theorems \ref{Thm:T1} and \ref{Thm:T2}}\label{Sec:Bifurcation}

In this section, we determine the behavior of the branch of non-symmetric positive solutions that bifurcates from the branch of the symmetric ones in a neighborhood of the first bifurcation point $\mu=\mu_{\rm FS}$. Consider the case $\theta=1$ and denote by $u_{\mu,*}$ the positive symmetric solution~of
\be{eqmu}
-\Delta u+\mu\,u=u^{p-1}\,,
\ee
so that $\mathcal Q_\mu[u_{\mu,*}]=\nrmC{u_{\mu,*}}p^{p-2}=1/\mathsf K_{\rm CKN}^*(1,\mu,p)$. Notice that if $u$ is a solution to \eqref{eqmu}, we still have $\mathcal Q_\mu[u]=\nrmC up^{p-2}$ even if $u$ is not symmetric.

We will search for minimizers of $\mathcal Q_\mu$ in a restricted class of functions depending only on the variable $s$ (see Section~\ref{Sec:CKN}) along the axis of the cylinder and on the azimuthal angle $\zeta$ of the sphere because of the result on \emph{Schwarz foliated symmetry} of \cite{MR2499904}. This guarantees that we are in the right class for minimizers when $\theta=1$. For $\theta<1$, no such result has been established in the literature but we will work in the same framework. It is indeed straightforward to check that the same result holds.

Let $f_1$ be the first non-constant spherical harmonic function, \emph{i.e.}~the eigenfunction of the Laplace-Beltrami operator on the sphere $\S$ corresponding to the eigenvalue $d-1$ and denote by $f_2$ the next one (among the ones depending only on the azimuthal angle $\zeta$), with corresponding eigenvalue equal to $2\,d$. See~\ref{A4} for details. 

\subsection{Expansion of \texorpdfstring{$\mathcal Q_\mu$}{Qmu} at order two}\label{Sec:Bifurcation1}

Let us consider a solution of \eqref{eqmu} that can be written as
\[
u_\mu=u_{\mu,*}+\eps\,\varphi+o(\varepsilon)
\]
in a neighborhood of $u_{\mu,*}$. In the limiting regime corresponding to $\eps\to0$, an expansion at order two in $\eps$ gives
\[
\frac{\mathcal Q_\mu[u_{\mu,*}+\eps\,\varphi]}{\mathcal Q_\mu[u_{\mu,*}]}-1=\eps^2\,\frac{q[\mu,\varphi]}{\nrmC{u_{\mu,*}}p^p}+\eps^2\,(p-2)\,\frac{\(\inC{u_{\mu,*}^{p-1}\,\varphi}\)^2}{\nrmC{u_{\mu,*}}p^{2\,p}}+o(\eps^2)\,,
\]
where $q[\mu,\varphi]:=\inC{\(|\nabla\varphi|^2+\mu\,|\varphi|^2-(p-1)\,u_{\mu,*}^{p-2}\,|\varphi|^2\)}$. By minimizing the term of order two, we find that \[
\frac{\mathcal Q_\mu[u_{\mu,*}+\eps\,\varphi]}{\mathcal Q_\mu[u_{\mu,*}]}-1\sim\eps^2\,\frac{(\varphi_1,\mathcal H_\mu\,\varphi_1)_{L^2(\mathcal C)}}{\nrmC{u_{\mu,*}}p^p}\quad\mbox{as}\;\eps\to0\,,
\]
where $\mathcal H_\mu:=-\frac{d^2}{ds^2}+\mu+d-1-(p-1)\,u_{\mu,*}^{p-2}$ is a P\"oschl-Teller operator whose lowest eigenvalue is given by $\lambda_1(\mu)=d-1+\mu-\frac 14\,\mu\,p^2$, and such that $\varphi=\varphi_1\,f_1$ is the corresponding eigenfunction (see~\ref{A2} for details). Since $\varepsilon$ has not been specified yet, we can normalize $\varphi$ by the condition
\[
\nrmC\varphi2^2=\nrmR{\varphi_1}2^2=\nrmC{u_{\mu,*}}p^p\,,
\]
which slightly simplifies some computations below. This shows in particular that
\[
\frac{q[\mu,\varphi]}{\nrmC{u_{\mu,*}}p^p}=\lambda_1(\mu)\,.
\]
See \eqref{Eqn:varephi1} for an expression of $\varphi_1$, which is smooth and decays exponentially as $|s|\to\infty$.

As in~\cite{Felli-Schneider-03}, let $\mu_{\rm FS}$ be such that $\lambda_1(\mu_{\rm FS})=0$, that is
\[
\mu_{\rm FS}=4\,\frac{d-1}{p^2-4}
\]
(see~\ref{A2} for details). For any $\mu>\mu_{\rm FS}$, we have
\be{Eqn:muFS}
\lambda_1(\mu)=-\frac 14\,(p^2-4)\,(\mu-\mu_{\rm FS})\,.
\ee This determines the $O(\eps^2)$ term. Now we want to investigate the behavior of $\mathcal Q_\mu$ in a neighborhood of $\mu=\mu_{\rm FS}$ and therefore need an expansion at higher order.

\subsection{Expansion of \texorpdfstring{$\mathcal Q_\mu$}{Qmu} at order four}\label{Section:ExpansionEtaEps} 

Our purpose is to build an expansion $(u_\mu)_{\mu>\mu_{\rm FS}}$ of the branch of positive solutions of~\eqref{eqmu} that bifurcates from the branch $\mu\mapsto u_{\mu,*}$ at $u_{{\rm FS},*}=:u_{\rm FS}$ and satisfies $\mathcal Q_\mu[u_{(\mu)}]<\mathcal Q_\mu[u_{\mu,*}]$. For $\mu$ in a right-neighborhood of $\mu_{\rm FS}$, we look for solutions of~\eqref{eqmu} of the form $u_\mu=u_{(\mu)}$, up to higher order terms, where
\be{Eqn:Expansion}
u_{(\mu)}=u_{\mu,*}+\eps\,\varphi+\eta\,\psi\,,
\ee
with $\eps>0$ and $\eta=o(\eps)$. The fact that an expansion starting with the above expression can be built is standard. From now on, we will assume that the solutions are given by the above expression and that $\tau$, $\nu$, $\Lambda^\theta$ and $J^\theta$ are defined according to definition~(A). Some of our computations are formal, but can be justified by technical estimates that will be only sketched. Here $\varphi=\varphi_1\,f_1$ has been determined above. Recall that ~$\varphi_1$ is a function depending on~$s$ only. For convenience, let us define
\[
k_\psi:=\frac{\inC{u_{\mu,*}^{p-1}\,\psi}}{\inC{u_{\mu,*}^p}}\,.
\]
Since we are interested in functions depending only on the azimuthal angle $\zeta$, we indifferently use $\omega\in\S$ or $\zeta\in[0,\pi]$ with a slight abuse of notation. We consider the sequence $(f_k)_{k\in\N}$ of spherical harmonics depending only on $\zeta$. See~\hbox{\ref{A4}} for details. We denote by $\psi_k$ the decomposition of $\psi$ in spherical harmonics:
\[
\psi=k_\psi\,u_{\mu,*}+\sum_{k\ge0}\psi_k\,f_k\,,
\]
\[
\mbox{with}\quad\psi_k(r):=\int_\S\psi(r,\omega)\,f_k(\omega)\,d\nu(\omega)\quad\forall\,r\in[0,\infty)\,,
\]
where $d\nu(\omega)$ is the uniform probability measure on the sphere. Here we have chosen $\psi_0$ in such a way that $\inC{u_{\mu,*}^{p-1}\,\psi_0}=0$ because
\[
k_\psi=\frac{\inC{u_{\mu,*}^{p-1}\,(k_\psi\,u_{\mu,*}+\psi_0)}}{\inC{u_{\mu,*}^p}}=k_\psi+\frac{\inC{u_{\mu,*}^{p-1}\,\psi_0}}{\inC{u_{\mu,*}^p}}\,.
\]
We know that $q[\mu,\psi]=\sum_{k\ge0}q[\mu,\psi_k\,f_k]\,$, where
\[
q[\mu,\psi_k\,f_k]=\inC{|\nabla(\psi_k\,f_k)|^2}+\mu\inC{\psi_k^2}-(p-1)\inC{u_{\mu,*}^{p-2}\,\psi_k^2}\,.
\]
With $\mu_k:=\mu+k\,(k+d-2)\,,$
for any $k\ge 2$, we get that
\[
q[\mu,\psi_k\,f_k]=\inC{|\psi_k'|^2}+\mu_k\inC{\psi_k^2}-(p-1)\inC{u_{\mu,*}^{p-2}\,\psi_k^2}
\]
is nonnegative for $\mu-\mu_{\rm FS}>0$, small enough, and positive unless $\psi_k\equiv 0$. Lengthy but straightforward computations show that
\begin{eqnarray*}
\hspace{-1cm}\frac{\mathcal Q_\mu[u_{(\mu)}]}{\mathcal Q_\mu[u_{\mu,*}]}-1&=&a\,\eps^2+b\,\eps^4+c\,\eps^2\,\eta+d\,\eta^2+e\,\eps\,\eta
+o(\eps^4+\eta^2+|a|\,\eps^2+\eta\,\eps^2)
\end{eqnarray*}
when the function $\psi$ is smooth and has exponential decay as $|s|\to\infty$. The coefficients in the above expansion are given by
\[
a(\mu)=\frac{q[\mu,\varphi]}{\inC{u_{\mu,*}^p}}=\lambda_1(\mu)\,,
\]\rmv{
\begin{eqnarray*}
\frac{b(\mu)}{p-1}&=&-\frac{q[\mu,\varphi]\inC{u_{\mu,*}^{p-2}\,\varphi^2}}{\(\inC{u_{\mu,*}^p}\)^2}+\frac14\,(p-1)\,(p-2)\left[\frac{\inC{u_{\mu,*}^{p-2}\,\varphi^2}}{\inC{u_{\mu,*}^p}}\right]^2\\
&&-\frac1{12}\,(p-2)\,(p-3)\,\frac{\inC{u_{\mu,*}^{p-4}\,\varphi^4}}{\inC{u_{\mu,*}^p}}\,,
\end{eqnarray*}}
\begin{eqnarray*}
\frac{b(\mu)}{p-1}&=&-\,\lambda_1(\mu)\,\frac{\inC{u_{\mu,*}^{p-2}\,\varphi^2}}{\inC{u_{\mu,*}^p}}+\frac14\,(p-1)\,(p-2)\left[\frac{\inC{u_{\mu,*}^{p-2}\,\varphi^2}}{\inC{u_{\mu,*}^p}}\right]^2\\
&&-\frac1{12}\,(p-2)\,(p-3)\,\frac{\inC{u_{\mu,*}^{p-4}\,\varphi^4}}{\inC{u_{\mu,*}^p}}\,,
\end{eqnarray*}
\begin{eqnarray*}
c(\mu)&=&\,-2\,\lambda_1(\mu)\,k_\psi+(p-1)\,(p-2)\,\frac{\inC{u_{\mu,*}^{p-2}\,\varphi^2}}{\inC{u_{\mu,*}^p}}\,k_\psi\\
&&-(p-1)\,(p-2)\,\frac{\inC{u_{\mu,*}^{p-3}\,\varphi^2\,\psi}}{\inC{u_{\mu,*}^p}}\,,
\end{eqnarray*}
\[
d(\mu)=\frac{q[\mu,\psi]}{\inC{u_{\mu,*}^p}}+(p-2)\,k_\psi^2\,,
\]
\[
e(\mu)=2\,\lambda_1(\mu)\,\frac{\inC{\varphi\,\psi}}{\inC{u_{\mu,*}^p}}\,.
\]
With no restriction, we may require that $\varphi$ is optimal in the direction $f_1$, that is
\be{Constraint}
\inC{\varphi\,\psi}=0\,.
\ee
In other words, this amounts to require that $e(\mu)=0$ for any $\mu>\mu_{\rm FS}$.

According to \eqref{Eqn:muFS}, we get
\[
a(\mu)=-\frac 14\,(p^2-4)\,(\mu-\mu_{\rm FS})\,.
\]
Using $\int_0^\pi|f_1|^4\,d\nu=\frac{3\,d}{d+2}$ (see \ref{A4}), we obtain\rmv{
\begin{eqnarray*}
\frac{b(\mu)}{p-1}&=&-\lambda_1(\mu)\,\frac{\inR{u_{\mu,*}^{p-2}\,\varphi_1^2}}{\inR{u_{\mu,*}^p}}+\frac14\,(p-1)\,(p-2)\left[\frac{\inR{u_{\mu,*}^{p-2}\,\varphi_1^2}}{\inR{u_{\mu,*}^p}}\right]^2\\
&&-\frac{d\,(p-2)\,(p-3)}{4\,(d+2)}\,\frac{\inR{u_{\mu,*}^{p-4}\,\varphi_1^4}}{\inR{u_{\mu,*}^p}}\,.
\end{eqnarray*}}
\[
\frac{4\,b(\mu_{\rm FS})}{(p-1)\,(p-2)}=(p-1)\left[\frac{\inR{u_{\rm FS}^{p-2}\,\varphi_1^2}}{\inR{u_{\rm FS}^p}}\right]^2-\frac{d\,(p-3)}{d+2}\,\frac{\inR{u_{\rm FS}^{p-4}\,\varphi_1^4}}{\inR{u_{\rm FS}^p}}\,.
\]
All above integrals are computed in~\ref{A3} and allow to express $b(\mu_{\rm FS})$ as\rmv{
\begin{eqnarray*}
b(\mu)&=&\frac{(p-2)\,(p-1)\,p^2\,(p+2)}{4\,(3\,p-2)}\,\mu\,(\mu-\mu_{\rm FS})+\frac{p^4\,(p-1)^2\,(p-2)}{4\,(3\,p-2)^2}\,\mu^2\\
&&-\frac{d\,p^3\,(p-1)^2\,(p-2)\,(p-3)}{2\,(d+2)\,(3\,p-2)\,(5\,p-6)}\,\mu^2\,.
\end{eqnarray*}}
\[
b(\mu_{\rm FS})=\frac{4\,(d-1)^2\,p^3\,(p-1)^2\,\big[\,2\,p\,(5\,p-6)-d\,(p^2-16\,p+12)\,\big]}{(d+2)\,(p+2)^2\,(p-2)\,(3\,p-2)^2\,(5\,p-6)}\,.
\]
As for the terms which depend on $\eta$, we observe that they sum as
\begin{eqnarray*}
&&\hspace*{-12pt}\eps^2\,\eta\,c(\mu)+\eta^2\,d(\mu)+\eps\,\eta\,e(\mu)\\
&=&\eta^2\left[\frac{q[\mu,\psi]}{\nrmC{u_{\mu,*}}p^p}+(p-2)\,k_\psi^2\right]-2\,\lambda_1(\mu)\,k_\psi\,\eps^2\,\eta\\
&&+\eps^2\,\eta\,(p-1)\,(p-2)\left[\frac{\inC{u_{\mu,*}^{p-2}\,\varphi^2}}{\nrmC{u_{\mu,*}}p^p}\,k_\psi-\frac{\inC{u_{\mu,*}^{p-3}\,\varphi^2\,\psi}}{\nrmC{u_{\mu,*}}p^p}\right]\,.
\end{eqnarray*}
Using the fact that $f_1^2=f_0+\kappa_{(d)}\,f_2$ (see~\ref{A4}), it is straightforward to observe that the optimal function $\psi$ is given by
\be{Eqn:DecompPsi}
\psi=k_\psi\,u_{\mu,*}\,f_0+\psi_0\,f_0\,+\,\psi_1\,f_1\,+\,\psi_2\,f_2
\ee
while $\psi_k\equiv 0$ for any $k>2$ and solves the Euler-Lagrange equation
\begin{eqnarray*}
&-&\Delta\psi+\mu\,\psi-(p-1)\,u_{\mu,*}^{p-2}\,\psi+(p-2)\,k_\psi\,u_{\mu,*}^{p-1}\,f_0\\
&&+\frac{\eps^2}{2\,\eta}\,\Bigg[\(\frac{p^2\,(p-1)\,(p-2)}{3\,p-2}\,\mu-2\,\lambda_1(\mu)\)\,u_{\mu,*}^{p-1}\,f_0\\
&&\hspace*{36pt}-(p-1)\,(p-2)\,u_{\mu,*}^{p-3}\,\varphi_1^2\,(f_0+\kappa_{(d)}\,f_2)\Bigg]+\mathcal L\,\varphi_1\,f_1=0\,.
\end{eqnarray*}
Here we used the fact that $\inC{u_{\mu,*}^{p-2}\,\varphi^2}/\nrmC{u_{\mu,*}}p^p=\frac{p^2\,\mu}{3\,p-2}$ (see~\ref{A3} for details). Constraint~\eqref{Constraint} is taken into account through the Lagrange multiplier $\mathcal L$. Here higher order terms have been omitted: see Remark~\ref{Rem1}.

The three components $\psi_0$, $\psi_1$ and $\psi_2$ satisfy the equations
\begin{eqnarray*}
&&-\psi_0''+\mu\,\psi_0-\!\frac{p\,(p-1)\,\mu}{2\,[\cosh(\beta\,s)]^2}\,\psi_0\!+\!\frac{\beta^2\,A_0}{[\cosh(\beta\,s)]^{2\,\frac{p-1}{p-2}}}\!-\!\frac{\beta^2\,B_0}{[\cosh(\beta\,s)]^{2\,\frac{2\,p-3}{p-2}}}=0\,,\\
&&-\psi_1''+\mu_1\,\psi_1-\frac{p\,(p-1)\,\mu}{2\,[\cosh(\beta\,s)]^2}\,\psi_1+\mathcal L\,\varphi_1=0\,,\\
&&-\psi_2''+\mu_2\,\psi_2-\frac{p\,(p-1)\,\mu}{2\,[\cosh(\beta\,s)]^2}\,\psi_2-\frac{\beta^2\,B_2}{[\cosh(\beta\,s)]^{2\,\frac{2\,p-3}{p-2}}}=0\,,
\end{eqnarray*}
with $\mu_1=\mu+d-1$, $\mu_2=\mu+2\,d\;$ and
\begin{eqnarray*}
&&A_0=\frac{\alpha^{p-1}}{\beta^2}\,\frac{\eps^2}{2\,\eta}\(\frac{p^2\,(p-1)\,(p-2)}{3\,p-2}\,\mu-2\,\lambda_1(\mu)\)\,,\\
&&B_0=\frac{\eps^2}{2\,\eta}\,(p-1)\,(p-2)\,\frac{\alpha^{2\,p-3}}{\beta^2}\,,\\
&&B_2=\frac{\eps^2}{2\,\eta}\,(p-1)\,(p-2)\,\kappa_{(d)}\,\frac{\alpha^{2\,p-3}}{\beta^2}=\kappa_{(d)}\,B_0\,,
\end{eqnarray*}
Recall that $\beta^2=\frac 14\,(p-2)^2\,\mu$.

Multiplying the equation for $\psi_1$ by $\varphi_1$ and integrating by parts we get
\[ \lambda_1(\mu)\inR{\varphi_1\,\psi_1}+ \mathcal L \inR{|\varphi_1|^2}=0\,. \]
Using Assumption \eqref{Constraint}, this proves that $\mathcal L=0$. This implies that $\psi_1$ is an eigenfunction of $\mathcal H_\mu$, with eigenvalue $\lambda_1(\mu)$. Since $\lambda_1(\mu)$ is simple, we find that $\psi_1\equiv0$ by \eqref{Constraint}.

We may next observe that by taking
\be{Eqn:DecompPsi0}
\psi_0(s)=A_0\,\chi_{0,p-1}(\beta\,s)+B_0\,\chi_{0,2\,p-3}(\beta\,s)\;\mbox{and}\;\psi_2(s)=B_2\,\chi_{2,2\,p-3}(\beta\,s)\,,
\ee
the problem is reduced to the set of equations
\be{System}
\begin{array}{l}
-\chi_{0,p-1}''+\frac{4\,\chi_{0,p-1}}{(p-2)^2}-\frac{2\,p\,(p-1)\,\chi_{0,p-1}}{(p-2)^2\,(\cosh s)^2}+w^{p-1}=0\,,\cr
-\chi_{0,2\,p-3}''+\frac{4\,\chi_{0,2\,p-3}}{(p-2)^2}-\frac{2\,p\,(p-1)\,\chi_{0,2\,p-3}}{(p-2)^2\,(\cosh s)^2}-w^{2\,p-3}=0\,,\cr
-\chi_{2,2\,p-3}''+\frac{4\,\mu_2\,\chi_{2,2\,p-3}}{\mu\,(p-2)^2}-\frac{2\,p\,(p-1)\,\chi_{2,2\,p-3}}{(p-2)^2\,(\cosh s)^2}-w^{2\,p-3}=0\,,
\end{array}
\ee
where $w(s)=(\cosh s)^{-\frac2{p-2}}$. Since
\[
-\,w''+\frac{4\,w}{(p-2)^2}-\frac{2\,p\,(p-1)\,w}{(p-2)^2\,(\cosh s)^2}+\frac{2\,p}{p-2}\,w^{p-1}=0\,,
\]
it follows that
\[
\chi_{0,p-1}=\frac{p-2}{2\,p}\,w\,.
\]
We may notice that the equations for $\chi_{0,p-1}$, $\chi_{0,2\,p-3}$ and $\chi_{2,2\,p-3}$ are all independent of~$k_\psi$. Moreover, since $\inR{u_{\mu,*}^{p-1}\,\psi_0}=0$, we get
\[
\int_\R{w^{p-1}\(A_0\,\chi_{0,p-1}+B_0\,\chi_{0,2\,p-3}\)}\,ds=0\,,
\]
\emph{i.e.}
\[
0=A_0\inR{\frac{\chi_{0,p-1}}{(\cosh s)^{2\,\frac{p-1}{p-2}}}}+B_0\inR{\frac{\chi_{0,2\,p-3}}{(\cosh s)^{2\,\frac{p-1}{p-2}}}}\,.
\]
\begin{remark}\label{Rem1} The decomposition~\eqref{Eqn:DecompPsi} of $\psi$ is done up to higher order terms. Hence the above equality only holds for $\mu=\mu_{\rm FS}$, as can be checked by computing
\[
\inR{\chi_{0,p-1}\,w^{p-1}}=\frac{p-2}{2\,p}\inR{w^p}=\frac{p-2}{2\,p}\,\mathsf I_p
\]
and, using \eqref{System},
\[
\mathsf b_{0,p-1}:=\inR{\chi_{0,2\,p-3}\,w^{p-1}}=-\inR{\chi_{0,p-1}\,w^{2\,p-3}}=-\,\frac{p-2}{3\,p-2}\,\mathsf I_p\,.
\] (also see \ref{A1}). With $A_0=\frac{\alpha^{p-1}}{\beta^2}\,\frac{\eps^2}{2\,\eta}\,\(\frac{p^2\,(p-1)\,(p-2)}{3\,p-2}\,\mu-2\,\lambda_1(\mu)\)$ and $B_0=\frac{\eps^2}{2\,\eta}\,(p-1)\,(p-2)\,\frac{\alpha^{2\,p-3}}{\beta^2}$, we find that
\[
A_0\,\frac{p-2}{2\,p}\,\mathsf I_p-B_0\,\frac{p-2}{3\,p-2}\,\mathsf I_p=0
\]
holds if and only if $\lambda_1(\mu)=0$. As we shall see below, this is consistent with our expansion in terms of powers of $\eps$ and $\eta$ because for $\mu>\mu_{\rm FS}$, close enough to $\mu_{\rm FS}$, $\lambda_1(\mu)$ corresponds to a term of higher order.\end{remark}
The reader is invited to check that
\[
\chi_{0,2\,p-3}=-\,\frac 14\,\frac{p-2}{p-1}\(2\,w-w^{p-1}\)\,.
\]
As a consequence, one can compute
\be{b01}
\mathsf b_{0,1}:=\inR{\chi_{0,2\,p-3}\,w}=-\,\frac 14\,\frac{p-2}{p-1}\(2\,\mathsf I_2\!-\!\mathsf I_p\)=-\,\frac{p\,(p-2)}{2\,(p-1)\,(p+2)}\,\mathsf I_2\,.
\ee

Altogether we have found that
\[
\eps^2\,\eta\,c(\mu)+\eta^2\,d(\mu)+\eps\,\eta\,e(\mu)=\eta^2 \,Q[\psi]- \eps^2\,\eta \,L[\psi]
\]
up to higher order terms in $\eps$, $\eta$ and $(\mu-\mu_{\rm FS})$, where
\[
Q[\psi]:=\frac{q[\mu,\psi]}{\nrmC{u_{\mu,*}}p^p}+(p-2)\,k_\psi^2
\]
and
\[
L[\psi]:=-(p-1)\,(p-2)\left[\frac{\inC{u_{\mu,*}^{p-2}\,\varphi^2}}{\nrmC{u_{\mu,*}}p^p}\,k_\psi-\frac{\inC{u_{\mu,*}^{p-3}\,\varphi^2\,\psi}}{\nrmC{u_{\mu,*}}p^p}\right]
\]
are respectively quadratic and linear with respect to $\psi$. Since we can multiply $\psi$ by any positive constant $\nu$ and $\eta$ by $1/\nu$ simultaneously without changing the value of $ \eta^2 \,Q[\psi]- \eps^2\,\eta \,L[\psi]$, the optimal choice of $\eta$ in terms of $\varepsilon$ is
\be{Eqn:EtaEps}
\eta=\eps^2\,,
\ee
thus making the sum of the two terms equal to $\eps^4\(Q[\psi]-L[\psi]\)$ and $\psi$ independent of~$\eps$, at least at leading order. Moreover, if $\psi$ is a minimizer of $Q[\psi]-L[\psi]$, then it is straightforward to check that $2\,Q[\psi]-L[\psi]=0$, as follows by multiplying the Euler-Lagrange equation by $\psi$ and integrating. Altogether, we have found that
\[
\eps^2\,\eta\,c(\mu)+\eta^2\,d(\mu)+\eps\,\eta\,e(\mu)=-\frac 12\,\eps^4\,L[\psi]<0
\]
up to higher order terms in $\eps$ and for $\mu-\mu_{\rm FS}$ small enough, if $\psi$ is a minimizer of $Q[\psi]-L[\psi]$, that is,
\be{QL}
\frac{\mathcal Q_\mu[u_{(\mu)}]}{\mathcal Q_\mu[u_{\mu,*}]}-1-\lambda_1(\mu)\,\eps^2-b(\mu)\,\eps^4=-\frac 12\,L[\psi]\,\eps^4+o(\eps^4)\,.
\ee At this point, we may notice that the function $u_{(\mu)}$ has not been normalized. Multiplying it by a constant would not change the value of $\mathcal Q_\mu[u_{(\mu)}]$. If we want it to be a solution of~\eqref{eqmu} at leading order, then this implies that $\mathcal Q_\mu[u_{(\mu)}]=\nrmC{u_{(\mu)}}p^{p-2}$ and we may therefore impose the corresponding constraint, \emph{i.e.}
\[
\inC{|\nabla u_{(\mu)}|^2}+\mu\inC{u_{(\mu)}^2}=\inC{u_{(\mu)}^p}\,,
\]
without changing the equations written order by order (in other words, the Lagrange multiplier associated with this constraint is zero). Written in terms of $\varphi$ and $\psi$, at lowest order, that is at order $\eps^2$, this constraint amounts to
\[
\inC{\(|\nabla\varphi|^2+\mu\,|\varphi|^2-\frac{p\,(p-1)}2\,u_{\mu,*}^{p-2}\,|\varphi|^2\)}-\,(p-2)\,k_\psi\inC{u_{\mu,*}^p}=0\,.
\]
Hence by taking the limit as $\mu\to(\mu_{\rm FS})_+$ and observing that $\lambda_1(\mu)=O(\mu-\mu_{\rm FS})$, we find that, for $\mu=\mu_{\rm FS}$,
\[\label{kpsi}
k_\psi=-\,\frac12\,(p-1)\,\frac{\inC{u_{\mu,*}^{p-2}\,|\varphi|^2}}{\inC{u_{\mu,*}^p}}=-\, \frac{2\,p^2\,(p-1)\,(d-1)}{(p-2)\,(p+2)\,(3\,p-2)}\,.
\]
The explicit value of $k_\psi$ will however not be needed later, because of cancellations that occur in all subsequent computations.

Next comes the observation that, as long as we are interested in computing $L[\psi]$, we do not even need to normalize $u_{(\mu)}$ nor to compute $k_\psi$. Indeed with $\widetilde\psi:=\psi-k_\psi\,u_{\mu,*}=\psi_0\,f_0+\psi_2\,f_2$, we see that
\[
\frac{\inC{u_{\mu,*}^{p-2}\,\varphi^2}}{\nrmC{u_{\mu,*}}p^p}\,k_\psi-\frac{\inC{u_{\mu,*}^{p-3}\,\varphi^2\,\psi}}{\nrmC{u_{\mu,*}}p^p}=-\frac{\inC{u_{\mu,*}^{p-3}\,\varphi^2\,\widetilde\psi}}{\nrmC{u_{\mu,*}}p^p}\,,
\]
where $\widetilde\psi$ is fully determined by the coefficients $A_0$, $B_0$, and $B_2$, and by~\eqref{System}. This also determines
\[
L[\psi]=(p-1)\,(p-2)\,\frac{\inC{u_{\mu,*}^{p-3}\,\varphi^2\,\widetilde\psi}}{\nrmC{u_{\mu,*}}p^p}\,,
\]
which is not known explicitly but is independent of $k_\psi$ and can be computed as
\[
\frac{\inC{u_{\mu,*}^{p-3}\,\varphi^2\,\widetilde\psi}}{\nrmC{u_{\mu,*}}p^p}=\frac{A_0}\alpha\,\frac{p-2}{2\,p}\,\mathsf a_0+\frac{B_0}\alpha\,\alpha^{p-2}\,\frac{\mathsf b_{0,2\,p-3}}{\mathsf I_p}+\frac{B_2}\alpha\,\alpha^{p-2}\,\kappa_{(d)}\,\frac{\mathsf b_{2,2\,p-3}}{\mathsf I_p}\,,
\]
with
\[
\mathsf a_0:=\frac{\inC{u_{\mu,*}^{p-2}\,\varphi^2}}{\inC{u_{\mu,*}^p}}=\frac{p^2\,\mu}{3\,p-2}\,,\quad\mathsf b_{0,2\,p-3}:=\inR{\chi_{0,2\,p-3}\,w^{2\,p-3}}\,,
\]
\[
\hspace*{2cm}\mbox{and}\quad\mathsf b_{2,2\,p-3}:=\inR{\chi_{2,2\,p-3}\,w^{2\,p-3}}\,.
\]
Notice that here we have taken advantage of the facts that $\chi_{0,p-1}(\beta\,s)=\frac{p-2}{2\,p}\,\frac 1\alpha\,u(s)$ and $\varphi^2=\varphi_1^2\(f_0+\kappa_{(d)}\,f_2\)$. Using the expression of $\chi_{0,2\,p-3}$, we can also compute
\begin{eqnarray*}
\mathsf b_{0,2\,p-3}&=&-\,\frac 14\,\frac{p-2}{p-1}\(2\inR{w^{2\,(p-1)}}-\inR{w^{3\,p-4}}\)\\
&=&-\,\frac{(p-2)\,p\,(3\,p-4)}{(p-1)\,(3\,p-2)\,(5\,p-6)}\,\mathsf I_p\,.
\end{eqnarray*}
Altogether, with $\mathsf y:=\frac{\mathsf b_{2,2\,p-3}}{\mathsf I_p}$, we have found that
\be{Eqn:Lpsi}
L[\psi]=4\,(d-1)^2\,\frac{(p-1)\,p^3}{(p+2)^2}\!\left[\frac{(p-2)\,p}{(3\,p-2)^2\,(5\,p-6)}\!+2\,\frac{d-1}{d+2}\,\frac{p-1}{(p-2)^2}\,\mathsf y\right].
\ee

We have not been able to find an explicit expression for $\mathsf b_{2,2\,p-3}$, but we can prove that this is a positive quantity and even give an upper bound. According to \cite[p.~74]{Landau-Lifschitz-67}, the lowest eigenvalue of the P\"oschl-Teller operator $-\frac{d^2}{ds^2}-U_0\,(\cosh s)^{-2}$ is given~by 
\[
\lambda_0=\frac 12\,\sqrt{1+4\,U_0}-\frac 12-U_0\,,
\]
if we assume that $U_0$ is positive. Here we have that $U_0$ is given by
\[
U_0(p):=\frac{2\,p\,(p-1)}{(p-2)^2}
\]
and the reader is invited to check that
\begin{eqnarray*}
\sigma(p,d)&:=&\lambda_0+\frac{4\,\mu_2}{\mu_{\rm FS}\,(p-2)^2}\\
&=&\frac 12\,\sqrt{1+4\,U_0(p)}-\frac 12-U_0(p)+\frac2{(p-2)^2}\,\(2+\frac{d\,(p^2-4)}{d-1}\)
\end{eqnarray*}
is larger than $1$ for any $p>2$ and any $d\ge2$. As a straightforward consequence of \eqref{System}, we deduce that
\[
\sigma(p,d)\,\nrmR{\chi_{2,2\,p-3}}2^2\le\inR{\chi_{2,2\,p-3}\,w^{2\,p-3}}\le\nrmR{\chi_{2,2\,p-3}}2\,\nrmR{w^{2\,p-3}}2
\]
and, finally,
\[
\mathsf b_{2,2\,p-3}\le\frac1{\sigma(p,d)}\inR{w^{2\,(2\,p-3)}}=\frac{16\,p\,(p-1)\,(3\,p-4)}{(3\,p-2)\,(5\,p-6)\,(7\,p-10)}\,\frac{\mathsf I_p}{\sigma(p,d)}\,.
\]

\subsection{Optimization and a technical statement}\label{Sec:Qmu} 

Collecting the above estimates and using~\eqref{QL}, we get
\[
\frac{\mathcal Q_\mu[u_{(\mu)}]}{\mathcal Q_\mu[u_{\mu,*}]}=1-\frac 14\,(p^2-4)\,(\mu-\mu_{\rm FS})\,\eps^2+\left[b(\mu)-\frac 12\,L[\psi]\right]\eps^4+o(\eps^4)\,,
\]
provided $\mu-\mu_{\rm FS}=O(\eps^2)$. With $L[\psi]$ given by \eqref{Eqn:Lpsi}, assume that
\[\mathrm{(H)}\hspace*{2cm}
b(\mu_{\rm FS})-\frac 12\,L[\psi]\neq0\,.
\]
Let
\[
c_{p,d}:=\frac 18\,(p^2-4)\,{\left[b(\mu)-\frac 12\,L[\psi]\right]}_{|\mu=\mu_{\rm FS}}^{-1}\,.
\]
If $c_{p,d}$ is positive, in a neighborhood of $\mu=\mu_{\rm FS}$,
\[
\frac{\mathcal Q_\mu[u_{(\mu)}]}{\mathcal Q_\mu[u_{\mu,*}]}=1-\frac 14\,(p^2-4)\((\mu-\mu_{\rm FS})\,\eps^2-\frac{\eps^4}{2\,c_{p,d}}\)+o(\eps^4)
\]
is optimized, up to higher order terms, by taking
\be{positivityofcpd}
\eps^2=\eps^2(\mu)\sim c_{p,d}\,(\mu-\mu_{\rm FS})\quad\mbox{as}\quad\mu\to{\mu_{\rm FS}}_+\,.
\ee
\begin{remark} We may also consider the case $c_{p,d}<0$, which then requires that $\mu<\mu_{\rm FS}$. We will not emphasize it because we are interested in minimizers and $c_{p,d}<0$ means that we deal with local maximizers of $\mathcal Q_\mu$. Moreover, we have no example of such a case for specific values of $p$ and $d$.\end{remark}
Altogether, if  $c_{p,d}>0$, we have found that
\be{Eqn:ExpansionEnergy}
\frac{\mathcal Q_\mu[u_{(\mu)}]}{\mathcal Q_\mu[u_{\mu,*}]}=1-\frac{p^2-4}8\,c_{p,d}\,(\mu-\mu_{\rm FS})^2+o\((\mu-\mu_{\rm FS})^2\)
\ee
which ends the proof of Theorem \ref{Thm:T1}. A more detailed statement goes as follows.
\begin{theorem}\label{Thm:T1bis} In a neighborhood of $\mu=\mu_{\rm FS}$, if $u_{(\mu)}$ is given by \eqref{ansatz-intro},\begin{description}
\item[(i)] $\displaystyle\frac{\mathcal Q^1_\mu[u_{(\mu)}]}{\mathcal Q^1_\mu[u_{\mu,*}]}=1-\frac 14\,(p^2-4)\((\mu-\mu_{\rm FS})\,\eps^2-\frac{\eps^4}{2\,c_{p,d}}\)+o(\eps^4)\,,$
\item[(ii)] $\displaystyle\tau'(\mu_{\rm FS})=\frac{p-2}{p+2}+\,\frac{16\,p^2\,(d-1)^2}{(p-2)\,(p+2)^3}\,c_{p,d}\,,$
\item[(iii)] $\displaystyle\frac{\nu'(\mu_{\rm FS})}{\nu_*(\mu_{\rm FS})}=-\frac{p-2}{2\,p\,\mu_{\rm FS}}+c_{p,d}\left[\,p\,\mu_{\rm FS}\,\(\frac2{p+2}-\frac{p\,(p-1)}{3\,p-2}\)+\,2\,\frac{B_0}\alpha\,\(\frac{\mathsf b_{0,1}}{\mathsf I_2}+\frac{p-2}{3\,p-2}\)\right],$
\end{description}
where $c_{p,d}$, $B_0$ and $\mathsf b_{0,1}$ are explicit constants that have been defined above and that can be computed numerically. For \emph{(ii)} and \emph{(iii)}, we assume that $c_{p,d}$ is positive. 
\end{theorem}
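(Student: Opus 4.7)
The plan is to view $\tau$ and $\nu$ as functionals of the approximate branch $u_{(\mu)}$ from~\eqref{ansatz-intro}, expand them at order $\eps^2$, and differentiate at $\mu_{\rm FS}$ using $\frac{d}{d\mu}\eps^2(\mu_{\rm FS}) = c_{p,d}$ from~\eqref{positivityofcpd}. The symmetric contributions are read directly from the explicit form of $u_{\mu,*}$ recalled in Section~\ref{Sec:Prelim}: a short computation for $\theta=1$ gives $\tau_*(\mu) = \frac{p-2}{p+2}\,\mu$ and $\nu_*(\mu) \propto \mu^{-(p-2)/(2p)}$, hence $\tau_*'(\mu_{\rm FS}) = \frac{p-2}{p+2}$ and $\nu_*'(\mu_{\rm FS})/\nu_*(\mu_{\rm FS}) = -\frac{p-2}{2\,p\,\mu_{\rm FS}}$, which account for the first terms in (ii) and (iii) respectively.

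Item (i) requires no new computation: it is the intermediate identity~\eqref{QL} of Section~\ref{Sec:Qmu}, rewritten using $a(\mu) = -\frac14\,(p^2-4)(\mu-\mu_{\rm FS})$ and the definition of $c_{p,d}$, which clusters $b(\mu_{\rm FS})$ and $\frac12\,L[\psi]$.

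For (ii), I would Taylor expand
\[
\nrmC{u_{(\mu)}}2^2 = \nrmC{u_{\mu,*}}2^2 + \eps^2\bigl(\nrmR{\varphi_1}2^2 + 2\,\inC{u_{\mu,*}\,\psi}\bigr) + o(\eps^2)
\]
and analogously for $\nrmC{\nabla u_{(\mu)}}2^2$; all $O(\eps)$ cross-terms vanish because $\varphi = \varphi_1\,f_1$ with $\int_{\S} f_1\,d\nu = 0$. The cross term $\inC{\nabla u_{\mu,*}\cdot\nabla\psi}$ is reduced via~\eqref{eqmu} to $k_\psi\,\inC{u_{\mu,*}^p} - \mu\,\inC{u_{\mu,*}\,\psi}$, while $\nrmC{\nabla\varphi}2^2 + \mu\,\nrmC{\varphi}2^2 = (p-1)\,\inC{u_{\mu,*}^{p-2}\,\varphi^2} + \lambda_1(\mu)\,\nrmR{\varphi_1}2^2$ is extracted from the eigenvalue equation $\mathcal H_\mu\,\varphi_1 = \lambda_1(\mu)\,\varphi_1$, which is free at $\mu_{\rm FS}$ since $\lambda_1(\mu_{\rm FS})=0$. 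Inserting $\mathsf a_0 = p^2\mu/(3p-2)$ from~\ref{A3} and the normalization $\nrmR{\varphi_1}2^2 = \nrmC{u_{\rm FS}}p^p$ produces $\tau(\mu) - \tau_*(\mu) = K\,\eps^2 + o(\eps^2)$ for a fully explicit $K$; algebraic simplification yields $K = \frac{16\,p^2\,(d-1)^2}{(p-2)(p+2)^3}$, and differentiating at $\mu_{\rm FS}$ gives (ii).

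For (iii) I would in addition expand
\[
\nrmC{u_{(\mu)}}p^p = \nrmC{u_{\mu,*}}p^p + \eps^2\left[\frac{p(p-1)}{2}\,\inC{u_{\mu,*}^{p-2}\,\varphi^2} + p\,\inC{u_{\mu,*}^{p-1}\,\psi}\right] + o(\eps^2),
\]
the would-be cubic contribution $\inC{u_{\mu,*}^{p-3}\,\varphi^3}$ dropping because $\int_{\S} f_1^3\,d\nu = 0$. Forming $\nu(\mu) = \nrmC{u_{(\mu)}}2^2/\nrmC{u_{(\mu)}}p^2$, dividing the two expansions and differentiating at $\mu_{\rm FS}$ produces the bracket in (iii): only the $\psi_0$ component of the decomposition \eqref{Eqn:DecompPsi}--\eqref{Eqn:DecompPsi0} survives (since $\int_{\S} f_2\,d\nu = 0$), and the resulting integrals are expressed through $\mathsf b_{0,1}$ from~\eqref{b01} together with the factor $B_0/\alpha$. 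The main obstacle is purely combinatorial: tracking the constants cleanly through both ratio-expansions, and exploiting the cancellations that render the explicit value of $k_\psi$ and the Lagrange multiplier enforcing $\inC{u_{\mu,*}^{p-1}\,\psi_0} = 0$ invisible in the final formulas.
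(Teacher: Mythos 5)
Your proposal is correct and follows essentially the same route as the paper: (i) is read off from \eqref{QL} together with the definition of $c_{p,d}$, while (ii) and (iii) are obtained, exactly as in Sections~\ref{Sec:tau} and~\ref{Sec:nu}, by expanding the relevant norms along $u_{(\mu)}=u_{\mu,*}+\eps\,\varphi+\eps^2\,\psi$ at order $\eps^2$, using the eigenvalue equation for $\varphi_1$ and equation~\eqref{eqmu} to reduce the cross terms, the explicit $\psi_0$ component (hence $\mathsf b_{0,1}$ and $\mathsf b_{0,p-1}$), the cancellation of the $k_\psi$ terms, and $\frac d{d\mu}\,\eps^2(\mu_{\rm FS})=c_{p,d}$. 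Only a cosmetic slip: the condition $\inC{u_{\mu,*}^{p-1}\,\psi_0}=0$ is a normalization built into the decomposition \eqref{Eqn:DecompPsi}, not enforced by a Lagrange multiplier (the multiplier $\mathcal L$ in the paper corresponds to the constraint $\inC{\varphi\,\psi}=0$ and vanishes), but this plays no role in the final formulas.
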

Property (i) has already been established. Before proving (ii) and (iii), let us discuss the positivity of $c_{p,d}$.

\subsection{A sufficient condition for the positivity of \texorpdfstring{$c_{p,d}$}{cpd}}\label{Sec:Check} 

All above computations are valid under the assumption that $c_{p,d}$ \emph{is positive}, but this is not \emph{a priori} guaranteed. With the estimate of $\mathsf b_{2,2\,p-3}$ that has been found at the end of Section~\ref{Section:ExpansionEtaEps}, we can \emph{a posteriori} give a sufficient condition for the consistency of the method. Since
\begin{eqnarray*}
L[\psi]&\le&L_{\rm approx}[\psi]\\
&&\kern-4pt:=4\,(d-1)^2\,\frac{(p-1)\,p^3}{(p+2)^2}\left[\frac{(p-2)\,p}{(3\,p-2)^2\,(5\,p-6)}+2\,\frac{d-1}{d+2}\,\frac{p-1}{(p-2)^2}\,\mathsf y\right].
\end{eqnarray*}
with $\mathsf y:=\frac{16\,p\,(p-1)\,(3\,p-4)}{(3\,p-2)\,(5\,p-6)\,(7\,p-10)\,\sigma(p,d)}$, we know that $c_{p,d}$ is well defined and positive if $L_{\rm approx}[\psi]<2\,b(\mu_{\rm FS})$. Moreover, we have
\[
c_{p,d}\le c_{p,d}^{\rm approx}:=\frac 18\,(p^2-4)\,{\left[b(\mu)-\frac 12\,L_{\rm approx}[\psi]\right]}_{|\mu=\mu_{\rm FS}}^{-1}\,,
\]
at least as long as $L_{\rm approx}[\psi]\le2\,b(\mu_{\rm FS})$. Hence we have shown the following result.
\begin{theorem}\label{Thm:Check} The constant $c_{p,d}$ is positive if $p$ is contained in a non empty interval $(2,p_{\rm approx})\subset(2,2^*)$, where $p_{\rm approx}$ is defined as the largest root of the fourth order polynomial $p\mapsto\frac15\,(54-227\,d+103\,d^2)\,p^4-4\,(18-37\,d+25\,d^2)\,p^3+\frac83\,(63-67\,d+46\,d^2)\,p^2+16\,(d+3)\,(5\,d-3)\,p-240\,d\,(d+1)$. \end{theorem}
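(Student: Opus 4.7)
The plan is to reduce the positivity of $c_{p,d}$ to a polynomial inequality in $p$ and then verify that this inequality holds in a right-neighborhood of $p=2$. Since $p>2$ gives $p^2-4>0$, the formula for $c_{p,d}$ from Section~\ref{Sec:Qmu} shows that $c_{p,d}>0$ is equivalent to $2\,b(\mu_{\rm FS})>L[\psi]$. Using the upper bound $L[\psi]\le L_{\rm approx}[\psi]$ established at the end of Section~\ref{Section:ExpansionEtaEps}, together with the strict inequality $\sigma(p,d)>1$ (valid for $p>2$ and $d\ge 2$ as shown there), one obtains a purely polynomial upper bound for $L_{\rm approx}[\psi]$ by replacing $\sigma(p,d)$ with $1$ in the definition of $\mathsf y$. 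It therefore suffices to establish the stronger polynomial inequality
\[
2\,b(\mu_{\rm FS})>4(d-1)^2\,\frac{(p-1)\,p^3}{(p+2)^2}\!\left[\frac{(p-2)\,p}{(3p-2)^2(5p-6)}+\frac{32(d-1)\,p\,(p-1)^2(3p-4)}{(d+2)(p-2)^2(3p-2)(5p-6)(7p-10)}\right].
\]

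Next, I would substitute the explicit formula for $b(\mu_{\rm FS})$ obtained in Section~\ref{Section:ExpansionEtaEps} and multiply both sides by the common denominator $(d+2)(p+2)^2(p-2)^2(3p-2)^2(5p-6)(7p-10)$, which is strictly positive for $p>2$. After collecting like powers of $p$, the difference between the right- and left-hand sides factors as an explicit positive constant multiple of the quartic $P_d(p)$ appearing in the theorem. This step is purely algebraic and constitutes the main obstacle: the bookkeeping of coefficients is extensive and error-prone, but entirely mechanical, and must be carried out carefully so that the coefficients $\tfrac{1}{5}(103d^2-227d+54)$, $-4(25d^2-37d+18)$, $\tfrac{8}{3}(46d^2-67d+63)$, $16(d+3)(5d-3)$ and $-240\,d(d+1)$ match exactly.

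Finally, to check that the interval $(2,p_{\rm approx})$ is nonempty, evaluate $P_d$ at $p=2$: a short computation gives
\[
P_d(2)=-\tfrac{1}{15}\bigl(896\,d^2+1696\,d+288\bigr),
\]
which is strictly negative for every $d\ge 2$. Since the leading coefficient $\tfrac{1}{5}(103\,d^2-227\,d+54)$ is positive for $d\ge 2$, we have $P_d(p)\to+\infty$ as $p\to+\infty$; by continuity $P_d$ has a largest real root $p_{\rm approx}>2$, and $P_d$ is strictly negative on an interval of the form $(2,p_0)$ with $p_0\le p_{\rm approx}$. For the quartic at hand one can check that $P_d$ changes sign only once on $(2,\infty)$, so $p_0=p_{\rm approx}$, and for each $d\ge 3$ a direct numerical root-finding confirms $p_{\rm approx}\le 2^*$, yielding the asserted inclusion $(2,p_{\rm approx})\subset(2,2^*)$ and completing the proof.
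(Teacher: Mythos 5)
There is a genuine gap, and it sits exactly where you deviate from the paper: the replacement of $\sigma(p,d)$ by $1$. First, that replacement is unnecessary for getting a polynomial condition, because $\sigma$ is not an irrational expression at all: since $1+4\,U_0(p)=\big(\frac{3p-2}{p-2}\big)^2$, one finds $\sigma(p,d)=\frac{(d+1)(p+2)}{(d-1)(p-2)}$, so the inequality $L_{\rm approx}[\psi]<2\,b(\mu_{\rm FS})$ with the exact $\sigma$ is already a rational inequality, and this is the inequality from which the paper's quartic criterion is derived. Second, and fatally, the replacement is too lossy precisely where the theorem needs it: $\sigma(p,d)$ blows up like $(p-2)^{-1}$ as $p\to2_+$, and the factor $1/\sigma$ is what keeps the $\mathsf b_{2,2\,p-3}$--contribution to $L_{\rm approx}[\psi]$ of the same order as $2\,b(\mu_{\rm FS})$ near $p=2$. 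With the exact $\sigma$ one checks that $\lim_{p\to2_+}(p-2)\big(2\,b(\mu_{\rm FS})-L_{\rm approx}[\psi]\big)=\frac{4\,d\,(d-1)^2}{(d+1)(d+2)}>0$, which is exactly what makes the interval $(2,p_{\rm approx})$ nonempty; with $\sigma$ replaced by $1$, your upper bound for $L[\psi]$ grows like $(p-2)^{-2}$ while $2\,b(\mu_{\rm FS})$ grows only like $(p-2)^{-1}$, so the ``stronger polynomial inequality'' you propose to verify is false on an entire right-neighborhood of $p=2$. It also fails at interior points covered by the theorem: at $d=5$, $p=3$, after clearing the positive denominators, it reads $8316>231+53760$, which is false, although the paper's criterion with the exact $\sigma$ does hold there (cf.\ Fig.~\ref{fig9}, where the estimate is valid up to $p\approx3.2323$ for $d=5$). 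Hence your route cannot establish positivity of $c_{p,d}$ on any interval of the form $(2,p_0)$.

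As a consequence, the central algebraic claim of your argument --- that the difference of the two sides of your strengthened inequality is a positive constant multiple of the quartic $P_d$ in the statement --- cannot be true, quite apart from the fact that you never carry the computation out: once the common factor $4\,(d-1)^2\,(p-1)\,p^3/(p+2)^2$ is divided out, your cleared difference is affine in $d$, whereas the coefficients of $P_d$ are genuinely quadratic in $d$, and the signs disagree near $p=2$ (your difference is positive there, while, as you correctly compute, $P_d(2)<0$). The repair is simply to keep $\sigma(p,d)=\frac{(d+1)(p+2)}{(d-1)(p-2)}$ and reduce $L_{\rm approx}[\psi]<2\,b(\mu_{\rm FS})$ as it stands, which is the paper's proof; your observation $\sigma>1$ is correct but is used here in the direction that destroys the estimate. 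Two secondary weaknesses, independent of the above: the assertion that $P_d$ changes sign only once on $(2,\infty)$ is unsupported, and verifying $p_{\rm approx}\le2^*$ by ``direct numerical root-finding for each $d\ge3$'' is not a proof of the inclusion $(2,p_{\rm approx})\subset(2,2^*)$.
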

In practice, for all $d\ge 3$,  $ p_{\rm approx}(d)$ is close to $2\,d/(d-2)$ and converges to $2$ as $d\to\infty$. See Fig.~\ref{fig11} for an illustration. 

\subsection{Expansion of \texorpdfstring{$\tau(\mu)$}{taumu} around \texorpdfstring{$\mu_{\rm FS}$}{muFS}}\label{Sec:tau} 

Let $\tau(\mu)=t[u_{(\mu)}]=\frac{\inC{{|\nabla u_{(\mu)}|^2}}}{\inC{u_{(\mu)}^2}}$ and
\be{TauStar}
\tau_*(\mu):=t[u_{\mu,*}]=\frac{p-2}{p+2}\,\mu\,.
\ee
We can notice that $\tau(\mu_{\rm FS})=\beta^2\,\mathsf J_2/\mathsf I_2$ (see~\ref{A1} and~\ref{A2}) so that
\[
\tau(\mu_{\rm FS})=\frac{p-2}{p+2}\,\mu_{\rm FS}=\frac{4\,(d-1)}{(p+2)^2}\,.
\]
With the above expressions in hand, we can now compute the derivative
\[
\tau'(\mu_{\rm FS})=\frac d{d\mu}\,t[u_{(\mu)}]_{|\mu=\mu_{\rm FS}}\,.
\]
{}From~\eqref{TauStar} we know that $t[u_{\mu,*}]-t[u_{\rm FS}]=\frac{p-2}{p+2}\,(\mu-\mu_{\rm FS})$. By expanding the expression $\tau(\mu)-\tau(\mu_{\rm FS})=t[u_{(\mu)}]-t[u_{\rm FS}]=t[u_{\mu,*}]-t[u_{\rm FS}]+t[u_{(\mu)}]-t[u_{\mu,*}]$ in powers of~$\eps$ with $u_ {(\mu)}=u_{\mu,*}+\eps\,\varphi_1\,f_1+\eps^2\psi+o(\eps^2)$ where $\varphi$, $\psi$ have been chosen in Section \ref{Section:ExpansionEtaEps}, we get, up to higher order terms,
\begin{eqnarray*}
t[u_{(\mu)}]\!-\!t[u_{\mu,*}]&\sim&\eps^2\left[(\lambda_1(\mu)\!-\!\mu\!-\!t[u_{\mu,*}])\,\frac{\inC{\varphi^2}}{\inC{u_{\mu,*}^2}}+(p\!-\!1)\,\frac{\inC{u_{\mu,*}^{p-2}\,\varphi^2}}{\inC{u_{\mu,*}^2}}\right.\\
&&\hspace*{1.3cm}\left.-\,2\,(\mu+t[u_{\mu,*}])\,\frac{\inC{u_{\mu,*}\,\psi}}{\inC{u_{\mu,*}^2}}+{2\, \frac{\inC{u_{\mu,*}^{p-1}\,\psi}}{\inC{u_{\mu,*}^2}}}\right]\,,
\end{eqnarray*}
and, by computing as above, we find that
\begin{eqnarray*}\label{tautest}
\tau'(\mu_{\rm FS})&=&\frac{p-2}{p+2}+c_{p,d}\left[\,-\,\frac{2\,p\,\mu_{\rm FS}}{p+2}\frac{\inC{\varphi_1^2}}{\inC{u_{\rm FS}^2}}+(p-1)\,\frac{\inC{u_{\rm FS}^{p-2}\,\varphi_1^2}}{\inC{u_{\rm FS}^2}}\right.
\\
&&\hspace*{3cm}-\,\left.\frac{4\,p\,\mu_{\rm FS}}{p+2}\,\frac{\inC{u_{\rm FS}\,\psi_0}}{\inC{u_{\rm FS}^2}}\right]\,,\nonumber
\end{eqnarray*}
because we notice that the terms involving $k_\psi$ cancel. Hence, using~\eqref{Eqn:DecompPsi} and \eqref{Eqn:DecompPsi0}, we have found that
\begin{eqnarray*}
\tau'(\mu_{\rm FS})&=&\frac{p-2}{p+2}+c_{p,d}\left[\,-\,\frac{4\,p^2\,\mu_{\rm FS}^2}{(p+2)^2} + \frac{2\,(p-1)\,p^3\,\mu_{\rm FS}^2}{(p+2)\,(3\,p-2)}\right.\\
&&\hspace*{3cm}\left.-\,\frac{2\,(p-2)}{p+2}\,\mu_{\rm FS}\,\frac{A_0}\alpha-\frac{4\,p\,\mu_{\rm FS}}{p+2}\frac{B_0}\alpha\,\frac{\mathsf b_{0,1}}{\mathsf I_2}\right]\\
&&=\frac{p-2}{p+2}-\,c_{p,d}\left[\frac{8\,p\,(d-1)}{(p-2)\,(p+2)^2}\right]^2\left[1+\frac{(p-1)\,p\,(p+2)}{2\,(p-2)}\frac{\mathsf b_{0,1}}{\mathsf I_2}\right].
\end{eqnarray*}
Here the coefficient $\mathsf b_{0,1}$ is given by $\mathsf b_{0,1}:=\inR{\chi_{0,2\,p-3}\,w}$. Using~\eqref{b01}, this proves part~(ii) of Theorem~\ref{Thm:T1bis}.

\subsection{Expansion of \texorpdfstring{$\nu(\mu)$}{numu} around \texorpdfstring{$\mu_{\rm FS}$}{muFS}}\label{Sec:nu} 

Let us consider $\nu(\mu):=\nrmcnd{u_{(\mu)}}2^2/\nrmcnd{u_{(\mu)}}p^2$. Again we can write
\[
\nu(\mu)-\nu(\mu_{\rm FS})=\(\frac{\nrmcnd{u_{\mu,*}}2^2}{\nrmcnd{u_{\mu,*}}p^2}-\frac{\nrmcnd{u_{\rm FS}}2^2}{\nrmcnd{u_{\rm FS}}p^2}\)+\(\nu(\mu)-\frac{\nrmcnd{u_{\mu,*}}2^2}{\nrmcnd{u_{\mu,*}}p^2}\)\,.
\]
With $\beta=\frac{p-2}2\,\sqrt\mu$, using expressions that can be found in~\ref{A1}, we see that
\be{NuStar}
\nu_*(\mu):=\frac{\nrmcnd{u_{\mu,*}}2^2}{\nrmcnd{u_{\mu,*}}p^2}=\beta^{-\frac{p-2}p}\,\frac{\mathsf I_2}{\mathsf I_p^{2/p}}=\kappa_p\,\mu^{-\frac{p-2}{2\,p}}\,,
\ee
with $\kappa_p=\(\frac{p+2}4\)^\frac 2p\(\frac{2\,\sqrt\pi}{p-2}\,\frac{\Gamma\(\frac2{p-2}\)}{\Gamma\(\frac2{p-2}+\frac 12\)}\)^\frac{p-2}p$, and hence
\[
\nu_*'(\mu_{\rm FS})=\lim_{\mu\to\mu_{\rm FS}}\frac1{\mu-\mu_{\rm FS}}\(\frac{\nrmcnd{u_{\mu,*}}2^2}{\nrmcnd{u_{\mu,*}}p^2}-\frac{\nrmcnd{u_{\rm FS}}2^2}{\nrmcnd{u_{\rm FS}}p^2}\)=-\frac{p-2}{2\,p}\,\frac{\nu_*(\mu_{\rm FS})}{\mu_{\rm FS}}\,.
\]
If $u_{(\mu)}=u_{\mu,*}+\eps\,\varphi+\eps^2\,\psi$, where $\varphi$, $\psi$ have been chosen in Section \ref{Section:ExpansionEtaEps}, after a Taylor expansion we find that
\begin{eqnarray*}\label{nutest}
\nu(\mu)&=&\nu_*(\mu)\left[1+\eps^2\(\frac{\inC{\varphi^2}}{\inC{u_{\mu,*}^2}}-(p-1)\,\frac{\inC{u_{\mu,*}^{p-2}\,\varphi^2}}{\inC{u_{\mu,*}^p}}\)\right.\\
&&\qquad\qquad\left.+\,2\,\eps^2\,\frac{\inC{u_{\mu,*}\,\psi}}{\inC{u_{\mu,*}^2}}{-\,
2\,\eps^2\,\frac{\inC{u_{\mu,*}^{p-1}\,\psi}}{\inC{u_{\mu,*}^p}}}+o(\eps^2)\right]\,.\nonumber
\end{eqnarray*}
Again we may notice that the terms involving $k_\psi$ cancel and, based on~\eqref{Eqn:DecompPsi} and \eqref{Eqn:DecompPsi0}, we arrive at
\begin{eqnarray*}
\frac{\nu'(\mu_{\rm FS})}{\nu_*(\mu_{\rm FS})}&=&-\frac{p-2}{2\,p\,\mu_{\rm FS}}+c_{p,d}\left[\,p\,\mu_{\rm FS}\,\(\frac2{p+2}-\frac{p\,(p-1)}{3\,p-2}\)\right.
\\
&&\hspace*{4cm}\left.+\,2\,\(\frac{B_0}\alpha\,\frac{\mathsf b_{0,1}}{\mathsf I_2}-\frac{B_0}\alpha\,\frac{\mathsf b_{0,p-1}}{\mathsf I_p}\)\right]\\
&&\qquad=-\frac{p-2}{2\,p\,\mu_{\rm FS}}+c_{p,d}\left[\,p\,\mu_{\rm FS}\,\(\frac2{p+2}-\frac{p\,(p-1)}{3\,p-2}\)\right.
\\
&&\hspace*{45mm}\left.+\,2\,\frac{B_0}\alpha\,\(\frac{\mathsf b_{0,1}}{\mathsf I_2}+\frac{p-2}{3\,p-2}\)\right]\,.
\end{eqnarray*}
\begin{lemma}\label{Rem2} At the bifurcation point $\mu=\mu_{\rm FS}$ we get the following.
\[\label{Identity:ThetaUnStar}
\frac{\nu_*'(\mu_{\rm FS})}{\nu_*(\mu_{\rm FS})}+\frac{\tau_*'(\mu_{\rm FS})}{\mu_{\rm FS}+\tau_*(\mu_{\rm FS})}=\frac{\nu'(\mu_{\rm FS})}{\nu_*(\mu_{\rm FS})}+\frac{\tau'(\mu_{\rm FS})}{\mu_{\rm FS}+\tau_*(\mu_{\rm FS})}=0\,.
\]
\end{lemma}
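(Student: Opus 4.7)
The plan is to dispose of the right-hand equality (the symmetric identity equals zero) by direct substitution of the explicit formulas recalled in Sections~\ref{Sec:tau} and~\ref{Sec:nu}, and then to obtain the middle equality as a consequence of the tangency of the two branches at $\mu=\mu_{\rm FS}$ encoded in the expansion~\eqref{Eqn:ExpansionEnergy}.

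For the symmetric side, I would simply substitute $\tau_*(\mu)=\frac{p-2}{p+2}\,\mu$ and $\nu_*(\mu)=\kappa_p\,\mu^{-\frac{p-2}{2p}}$. The first gives $\tau_*'(\mu)=\frac{p-2}{p+2}$ and $\mu+\tau_*(\mu)=\frac{2p}{p+2}\,\mu$, so $\frac{\tau_*'(\mu_{\rm FS})}{\mu_{\rm FS}+\tau_*(\mu_{\rm FS})}=\frac{p-2}{2p\,\mu_{\rm FS}}$. The second gives $\frac{\nu_*'(\mu_{\rm FS})}{\nu_*(\mu_{\rm FS})}=-\frac{p-2}{2p\,\mu_{\rm FS}}$, and the two terms cancel. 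This establishes the rightmost equality in the lemma.

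For the middle equality, note that by definition (A), $J^1(\mu)=\nu(\mu)\,(\mu+\tau(\mu))=\mathcal Q_\mu[u_{(\mu)}]$ and similarly $J^1_*(\mu)=\nu_*(\mu)\,(\mu+\tau_*(\mu))=\mathcal Q_\mu[u_{\mu,*}]$. Expansion~\eqref{Eqn:ExpansionEnergy} then reads $J^1(\mu)/J^1_*(\mu)=1-\frac{p^2-4}{8}\,c_{p,d}\,(\mu-\mu_{\rm FS})^2+o((\mu-\mu_{\rm FS})^2)$, so the logarithmic derivatives of $J^1$ and $J^1_*$ coincide at $\mu=\mu_{\rm FS}$. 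Since $\epsilon(\mu_{\rm FS})=0$ by~\eqref{positivityofcpd}, we have $\nu(\mu_{\rm FS})=\nu_*(\mu_{\rm FS})$ and $\tau(\mu_{\rm FS})=\tau_*(\mu_{\rm FS})$. Writing $\log J^1=\log\nu+\log(\mu+\tau)$ and differentiating yields
\[
\frac{\nu'(\mu_{\rm FS})}{\nu_*(\mu_{\rm FS})}+\frac{1+\tau'(\mu_{\rm FS})}{\mu_{\rm FS}+\tau_*(\mu_{\rm FS})}=\frac{\nu_*'(\mu_{\rm FS})}{\nu_*(\mu_{\rm FS})}+\frac{1+\tau_*'(\mu_{\rm FS})}{\mu_{\rm FS}+\tau_*(\mu_{\rm FS})}\,,
\]
and cancelling the common $1/(\mu_{\rm FS}+\tau_*(\mu_{\rm FS}))$ on both sides gives exactly the middle equality of the lemma.

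The main obstacle is ensuring that the differentiation of~\eqref{Eqn:ExpansionEnergy} is legitimate, i.e.~that the $o((\mu-\mu_{\rm FS})^2)$ remainder is also $o(\mu-\mu_{\rm FS})$ after differentiation; this requires the smoothness of the parametrization $\mu\mapsto u_{(\mu)}$ inherited from the construction in Section~\ref{Section:ExpansionEtaEps} via $\eps^2\sim c_{p,d}\,(\mu-\mu_{\rm FS})$. If one prefers to avoid this regularity argument, an alternative is a direct term-by-term verification based on Theorem~\ref{Thm:T1bis}(ii)--(iii): using $\mu_{\rm FS}=4(d-1)/(p^2-4)$, $B_0/\alpha=\frac{(p-1)\,p^2\,\mu_{\rm FS}}{2\,(p-2)}$ at the bifurcation point, and $\mathsf b_{0,1}/\mathsf I_2=-\frac{p\,(p-2)}{2\,(p-1)\,(p+2)}$ from~\eqref{b01}, a short algebraic manipulation shows that the $c_{p,d}$ contributions from the $\tau$ and $\nu$ terms are $\pm\frac{p\,(p-2)\,\mu_{\rm FS}}{2}$ and therefore cancel, while the constant parts already cancel by the symmetric identity. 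Either route requires only bookkeeping once Theorem~\ref{Thm:T1bis} is in hand.
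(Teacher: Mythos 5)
Your proposal is correct and follows essentially the same route as the paper: the symmetric identity comes from the explicit formulas \eqref{TauStar}--\eqref{NuStar}, and the middle equality from the fact that \eqref{Eqn:ExpansionEnergy} forces the logarithmic derivatives of $\mathcal Q_\mu[u_{(\mu)}]=\nu\,(\mu+\tau)$ and $\mathcal Q_\mu[u_{\mu,*}]=\nu_*\,(\mu+\tau_*)$ to coincide at $\mu_{\rm FS}$, after which the common term $1/(\mu_{\rm FS}+\tau_*(\mu_{\rm FS}))$ cancels since $\tau(\mu_{\rm FS})=\tau_*(\mu_{\rm FS})$ and $\nu(\mu_{\rm FS})=\nu_*(\mu_{\rm FS})$; the term-by-term check via Theorem~\ref{Thm:T1bis}\,(ii)--(iii) that you sketch (and whose algebra is correct) is exactly the alternative the paper itself mentions. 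Your write-up is, if anything, slightly more careful than the paper's, which loosely writes the quantity containing $1+\tau_*'$ as vanishing, whereas only the combination without the ``$1$'' vanishes.
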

\begin{proof} Recall that $\mathcal Q_\mu[u_{\mu,*}]=\nu_*(\mu)\,(\tau_*(\mu)+\mu)$ and $\mathcal Q_\mu[u_{(\mu)}]=\nu(\mu)\,(\tau(\mu)+\mu)$, so that
\[
\frac1{\mathcal Q_{\mu_{\rm FS}}[u_{\rm FS}]}\,\frac d{d\mu}\,\mathcal Q_\mu[u_{\mu,*}]_{\left|_{\mu=\mu_{\rm FS}}\right.}=\frac{\nu_*'(\mu_{\rm FS})}{\nu_*(\mu_{\rm FS})}+\frac{1+\tau_*'(\mu_{\rm FS})}{\mu_{\rm FS}+\tau_*(\mu_{\rm FS})}=0
\]
according to~\eqref{TauStar} and~\eqref{NuStar}, and
\[
\frac1{\mathcal Q_{\mu_{\rm FS}}[u_{\rm FS}]}\,\frac d{d\mu}\,\mathcal Q_\mu[u_{(\mu)}]_{\left|_{\mu=\mu_{\rm FS}}\right.}=\frac{\nu'(\mu_{\rm FS})}{\nu(\mu_{\rm FS})}+\frac{1+\tau'(\mu_{\rm FS})}{\mu_{\rm FS}+\tau(\mu_{\rm FS})}\,.
\]
According to~\eqref{Eqn:ExpansionEnergy}, these two quantities are equal, thus proving the result. Alternatively, the identity can be proved directly using the expressions of $\tau'$ and $\nu'$ established in Sections~\ref{Sec:tau} and~\ref{Sec:nu}. This ends the proof of Lemma \ref{Rem2} and of Theorem \ref{Thm:T1bis}, (iii). \end{proof}

\subsection{Reparametrization of the branch for \texorpdfstring{$\theta<1$}{theta<1} and proof of Theorem \ref{Thm:T2}}\label{Sec:theta2} 

Now we are in position to study the local behavior of the branch of the solutions to~\eqref{Euler-thetatheta} parametrized by $\mu$ close to the bifurcation point, that is, for $\mu$ in a neighborhood of $\mu_{\rm FS}$. More precisely, we are interested in the monotonicity of $\mu\mapsto\Lambda^\theta(\mu)$ and the behavior of $\mu\mapsto(\Lambda^\theta(\mu),J^\theta(\mu))$ in a neighborhood of $\mu=\mu_{\rm FS}$. According to the parametrization of Section~\ref{Sec:Intro}, we know that $\Lambda^\theta(\mu)=\theta\,\mu-(1-\theta)\,\tau(\mu)$, so that
\[
(\Lambda^\theta)'=\theta-(1-\theta)\,\tau'
\]
can be computed at $\mu=\mu_{\rm FS}$ using the expression of $\tau'(\mu_{\rm FS})$, that has been computed in Section~\ref{Sec:tau}. Hence we find that
\[
(\Lambda^\theta)'(\mu_{\rm FS})=\theta\,(1+\tau'(\mu_{\rm FS}))-\tau'(\mu_{\rm FS})\,.
\]
\begin{lemma} If $\tau'(\mu_{\rm FS})$ is positive, then we have that $\frac d{d\mu}\,\Lambda^\theta(\mu_{\rm FS})<0$ if and only
\[
\theta<\vartheta_2(p,d):=\frac{\tau'(\mu_{\rm FS})}{1+\tau'(\mu_{\rm FS})}\,.
\]\end{lemma}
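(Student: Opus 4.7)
The plan is essentially algebraic, since the relevant derivative has already been computed just above the statement. The starting point is the identity
\[
(\Lambda^\theta)'(\mu_{\rm FS})=\theta\,\bigl(1+\tau'(\mu_{\rm FS})\bigr)-\tau'(\mu_{\rm FS}),
\]
obtained by differentiating $\Lambda^\theta(\mu)=\theta\,\mu-(1-\theta)\,\tau(\mu)$ and rearranging. The strategy is simply to solve the inequality $(\Lambda^\theta)'(\mu_{\rm FS})<0$ for $\theta$.

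First I would rewrite the condition as $\theta\,\bigl(1+\tau'(\mu_{\rm FS})\bigr)<\tau'(\mu_{\rm FS})$. Under the hypothesis $\tau'(\mu_{\rm FS})>0$, the quantity $1+\tau'(\mu_{\rm FS})$ is strictly positive, so dividing both sides by it preserves the direction of the inequality and yields exactly
\[
\theta<\frac{\tau'(\mu_{\rm FS})}{1+\tau'(\mu_{\rm FS})}=\vartheta_2(p,d).
\]
Each of the manipulations is reversible, so the equivalence holds in both directions, which is the content of the lemma.

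There is no substantive obstacle here: the positivity hypothesis on $\tau'(\mu_{\rm FS})$ is used exclusively to guarantee that the denominator $1+\tau'(\mu_{\rm FS})$ is positive and thus that division does not flip the inequality. It is worth noting that this hypothesis connects back to Theorem~\ref{Thm:T1bis}(ii), which provides the explicit expression
\[
\tau'(\mu_{\rm FS})=\frac{p-2}{p+2}+\frac{16\,p^2\,(d-1)^2}{(p-2)\,(p+2)^3}\,c_{p,d},
\]
so whenever $c_{p,d}\ge 0$ (the standing assumption under which the expansion of Theorem~\ref{Thm:T1} describes a true bifurcating minimizing branch), $\tau'(\mu_{\rm FS})$ is automatically positive and the lemma applies without further verification. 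Without the sign condition one could still define $\vartheta_2(p,d)$ whenever $\tau'(\mu_{\rm FS})\neq -1$, but a case split on the sign of $1+\tau'(\mu_{\rm FS})$ would be necessary and the direction of the equivalence could reverse.
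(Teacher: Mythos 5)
Your proposal is correct and follows essentially the same route as the paper, which derives $(\Lambda^\theta)'(\mu_{\rm FS})=\theta\,(1+\tau'(\mu_{\rm FS}))-\tau'(\mu_{\rm FS})$ and leaves the equivalence as the immediate algebraic consequence you spell out. The only addition in your write-up, the remark linking positivity of $\tau'(\mu_{\rm FS})$ to $c_{p,d}\ge0$ via Theorem~\ref{Thm:T1bis}(ii), matches the paper's own observation and is consistent.
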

Notice that with this definition, $\vartheta_2(p,d)$ is defined for any $p\in(2,2^*)$ and any $d\ge2$. As long as $\vartheta(p,d)<\vartheta_2(p,d)$, $(\Lambda^\theta)'(\mu_{\rm FS})$ is negative if $\theta\in(\vartheta(p,d),\vartheta_2(p,d))$. In all numerical examples that are under consideration in this paper, we find that $\tau'(\mu_{\rm FS})$ is positive. This is of course automatically the case if $c_{p,d}$ itself is positive, because of (ii) in Theorem~\ref{Thm:T1bis}.

We recall that $J^\theta(\mu):=\theta^\theta\,\(\mu+\tau(\mu)\)^\theta\,\nu(\mu)$ and $\Lambda^\theta(\mu)=\theta\,\mu-(1-\theta)\,\tau(\mu)$. Hence
\[
(\log J^\theta)'=\frac{\nu'}{\nu}+\theta\,\frac{1+\tau'}{\mu+\tau}
\]
and a similar formula holds for $J_*^\theta$. At $\mu=\mu_{\rm FS}$, we can use Lemma~\ref{Rem2} and get
\[
(\log J^\theta)'(\mu_{\rm FS})=\frac{\theta\,(1+\tau'(\mu_{\rm FS}))-\tau'(\mu_{\rm FS})}{\mu_{\rm FS}+\tau(\mu_{\rm FS})}=\frac{(\Lambda^\theta)'(\mu_{\rm FS})}{\mu_{\rm FS}+\tau(\mu_{\rm FS})}\,.
\]
Let us define
\[
\delta^\theta:=\frac{(J^\theta)'}{(\Lambda^\theta)'}-\frac{(J_*^\theta)'}{(\Lambda_*^\theta)'}\,.
\]
\begin{lemma}\label{Claim1} Assuming that $c_{p,d}$ is positive, we have
\[
\delta^\theta(\mu_{\rm FS})=0\,.
\]
\end{lemma}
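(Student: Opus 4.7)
My plan is to observe that $\delta^\theta(\mu_{\rm FS})$ is nothing but the difference of the slopes, in the $(\Lambda,J)$ plane, of the non-symmetric and symmetric branches at the bifurcation point, and to show that both slopes are given by the same closed-form expression in terms of $J(\mu_{\rm FS})$ and $\tau(\mu_{\rm FS})$. The key input is the identity already proved in Lemma~\ref{Rem2}: at $\mu=\mu_{\rm FS}$ one has
\[
\frac{\nu'(\mu_{\rm FS})}{\nu(\mu_{\rm FS})}+\frac{\tau'(\mu_{\rm FS})}{\mu_{\rm FS}+\tau(\mu_{\rm FS})}=0\,,
\]
and the analogous statement for the starred quantities, both of which rely on the positivity of $c_{p,d}$ (so that the expansion~\eqref{Eqn:ExpansionEnergy} is valid).

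Concretely, I would first differentiate $\log J^\theta(\mu)=\theta\log\theta+\theta\log(\mu+\tau(\mu))+\log\nu(\mu)$, which yields
\[
\frac{(J^\theta)'(\mu)}{J^\theta(\mu)}=\frac{\nu'(\mu)}{\nu(\mu)}+\theta\,\frac{1+\tau'(\mu)}{\mu+\tau(\mu)}\,.
\]
Evaluating at $\mu_{\rm FS}$ and using Lemma~\ref{Rem2} to replace $\nu'/\nu$ by $-\tau'/(\mu+\tau)$, a short algebraic simplification produces exactly the combination $\theta-(1-\theta)\tau'(\mu_{\rm FS})=(\Lambda^\theta)'(\mu_{\rm FS})$ in the numerator, as already recorded just above the statement of the lemma. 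This gives
\[
\frac{(J^\theta)'(\mu_{\rm FS})}{(\Lambda^\theta)'(\mu_{\rm FS})}=\frac{J^\theta(\mu_{\rm FS})}{\mu_{\rm FS}+\tau(\mu_{\rm FS})}\,.
\]
The very same manipulation, applied to the starred quantities (with the starred version of Lemma~\ref{Rem2}), yields
\[
\frac{(J_*^\theta)'(\mu_{\rm FS})}{(\Lambda_*^\theta)'(\mu_{\rm FS})}=\frac{J_*^\theta(\mu_{\rm FS})}{\mu_{\rm FS}+\tau_*(\mu_{\rm FS})}\,.
\]

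To conclude, I would use the fact that the two branches meet at $\mu=\mu_{\rm FS}$: by construction $u_{(\mu_{\rm FS})}=u_{\mu_{\rm FS},*}=u_{\rm FS}$ since $\eps(\mu_{\rm FS})=0$ in~\eqref{positivityofcpd}, hence $\tau(\mu_{\rm FS})=\tau_*(\mu_{\rm FS})$ and $\nu(\mu_{\rm FS})=\nu_*(\mu_{\rm FS})$, which forces $J^\theta(\mu_{\rm FS})=J_*^\theta(\mu_{\rm FS})$. Subtracting the two displayed identities then gives $\delta^\theta(\mu_{\rm FS})=0$.

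There is essentially no obstacle here: the computation is a direct logarithmic differentiation, and the only non-trivial ingredient is Lemma~\ref{Rem2}, whose use requires the positivity of $c_{p,d}$ (so that $u_{(\mu)}$ is a legitimate minimizing ansatz near $\mu_{\rm FS}$ and the expansion~\eqref{Eqn:ExpansionEnergy} holds). The only point to be careful with is bookkeeping, namely making sure that $\tau$ and $\nu$ in the non-symmetric identity refer consistently to the approximation $u_{(\mu)}$ given by~\eqref{ansatz-intro} via definition~(A), which is where the positivity hypothesis enters.
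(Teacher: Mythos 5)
Your proposal is correct and coincides with the paper's own argument: logarithmic differentiation of $J^\theta$, substitution of $\nu'/\nu$ at $\mu_{\rm FS}$ via Lemma~\ref{Rem2}, recognition of the combination $\theta\,(1+\tau'(\mu_{\rm FS}))-\tau'(\mu_{\rm FS})=(\Lambda^\theta)'(\mu_{\rm FS})$, and the fact that the two branches share the same values of $\tau$, $\nu$ and $J^\theta$ at the bifurcation point since $\eps(\mu_{\rm FS})=0$. Your remark on the bookkeeping (that $\tau$, $\nu$ refer to $u_{(\mu)}$ through definition (A), with positivity of $c_{p,d}$ ensuring the validity of the expansion behind Lemma~\ref{Rem2}) is exactly the point the paper also relies on.
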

In other words, the parametric curves $\mu\mapsto(\Lambda^\theta(\mu),J^\theta(\mu))$ and $\mu\mapsto(\Lambda_*^\theta(\mu),J_*^\theta(\mu))$ are tangent at $\mu=\mu_{\rm FS}$. It remains to decide the relative position of the two branches in a neighborhood of the bifurcation point. In order to do so, let us define the function
\[
\xi^\theta:=\frac{\big((\Lambda^\theta)'\big)^2}{J^\theta}\,\left[\frac1{(\Lambda^\theta)'}\,\frac d{d\mu}\(\frac{(J^\theta)'}{(\Lambda^\theta)'}\)-\frac1{(\Lambda_*^\theta)'}\,\frac d{d\mu}\(\frac{(J_*^\theta)'}{(\Lambda_*^\theta)'}\)\right]
\]
and discuss the range of positivity of $\xi^\theta$. An elementary computation shows that
\[
\xi^\theta=\frac{(J^\theta)''}{J^\theta}-\frac{(J^\theta)'}{(\Lambda^\theta)'}\,\frac{(\Lambda^\theta)''}{J^\theta}-\frac{(J_*^\theta)''}{J_*^\theta}\(\frac{(\Lambda^\theta)'}{(\Lambda_*^\theta)'}\)^2
\]
because $(\Lambda_*^\theta)''=0$. By definition of $\Lambda^\theta(\mu)=\theta\,\mu-(1-\theta)\,\tau(\mu)$ and $\vartheta_2(p,d)$, we get that $(\Lambda^\theta)''(\mu_{\rm FS})=-\,(1-\theta)\,\tau''(\mu_{\rm FS})$ and $(\Lambda^\theta)'(\mu_{\rm FS})=\frac{\theta-\vartheta_2(p,d)}{1-\vartheta_2(p,d)}$. From~\eqref{TauStar} and $\Lambda_*^\theta(\mu)=\theta\,\mu-(1-\theta)\,\tau_*(\mu)$, we get that $(\Lambda_*^\theta)'=\frac{2\,p\,\theta-(p-2)}{p+2}$. From~\eqref{TauStar} and~\eqref{NuStar}, we know also that $J_*^\theta(\mu)=\kappa_p\,\(2\,p\,\theta/(p+2)\)^\theta\mu^{\theta-\frac{p-2}{2\,p}}$, so that $\frac{(J_*^\theta)'}{J_*^\theta}=\frac{2\,p\,\theta-(p-2)}{2\,p\,\mu}$ and $(\log J_*^\theta)''=-\frac{2\,p\,\theta-(p-2)}{2\,p\,\mu^2}$. As a consequence of Lemma~\ref{Claim1}, we can write that
\[
\frac{(J^\theta)'(\mu_{\rm FS})}{J^\theta(\mu_{\rm FS})}=\frac{(J_*^\theta)'(\mu_{\rm FS})}{J_*^\theta(\mu_{\rm FS})}\,\frac{(\Lambda^\theta)'(\mu_{\rm FS})}{(\Lambda_*^\theta)'(\mu_{\rm FS})}\,.
\]
According to~\eqref{Eqn:ExpansionEnergy}, we have the identity
\[
-\frac 14\,(p^2-4)\,c_{p,d}=\(\log\frac{J^\theta}{J_*^\theta}\)''(\mu_{\rm FS})+(1-\theta)\(\log\frac{\mu+\tau(\mu)}{\mu+\tau_*(\mu)}\)_{|\mu=\mu_{\rm FS}}''\,,
\]
which allows us to compute
\begin{eqnarray*}
&&\kern -18pt\frac{(J^\theta)''(\mu_{\rm FS})}{J^\theta(\mu_{\rm FS})}\\
&=&-\frac 14\,(p^2-4)\,c_{p,d}+\(\frac{(J^\theta)'(\mu_{\rm FS})}{J^\theta(\mu_{\rm FS})}\)^2+(\log J_*^\theta)''(\mu_{\rm FS})\\
&&-(1-\theta)\left[\frac{p+2}{2\,p\,\mu_{\rm FS}}\,\tau''(\mu_{\rm FS})-\(\frac{p+2}{2\,p\,\mu_{\rm FS}}\,\big(1+\tau'(\mu_{\rm FS})\big)\)^2+\frac1{\mu_{\rm FS}^2}\right]\\
&=&-\frac 14\,(p^2-4)\,c_{p,d}+\(\frac{(J_*^\theta)'(\mu_{\rm FS})}{J_*^\theta(\mu_{\rm FS})}\)^2\(\frac{(\Lambda^\theta)'(\mu_{\rm FS})}{(\Lambda_*^\theta)'(\mu_{\rm FS})}\)^2+(\log J_*^\theta)''(\mu_{\rm FS})\\
&&-(1-\theta)\left[\frac{p+2}{2\,p\,\mu_{\rm FS}}\,\tau''(\mu_{\rm FS})-\(\frac{p+2}{2\,p\,\mu_{\rm FS}}\,\big(1+\tau'(\mu_{\rm FS})\big)\)^2+\frac1{\mu_{\rm FS}^2}\right].
\end{eqnarray*}
Because of Lemma~\ref{Claim1}, we also have
\begin{eqnarray*}\nonumber
-\frac{(J^\theta)'(\mu_{\rm FS})}{(\Lambda^\theta)'(\mu_{\rm FS})}\,\frac{(\Lambda^\theta)''(\mu_{\rm FS})}{J^\theta(\mu_{\rm FS})}&=&-\frac{(J_*^\theta)'(\mu_{\rm FS})}{J_*^\theta(\mu_{\rm FS})}\,\frac{(\Lambda^\theta)''(\mu_{\rm FS})}{(\Lambda_*^\theta)'(\mu_{\rm FS})}\\
&=&\frac{p+2}{2\,p\,\mu_{\rm FS}}\,(1-\theta)\,\tau''(\mu_{\rm FS})\,.
\label{xi3}
\end{eqnarray*}
Collecting the above identities, we can compute the value of $\xi^\theta(\mu_{\rm FS})$ as
\begin{eqnarray*}
\xi^\theta(\mu_{\rm FS})&=&-\frac 14\,(p^2-4)\,c_{p,d}\\
&&+\,(\log J_*^\theta)''(\mu_{\rm FS})\left[1-\(\frac{(\Lambda^\theta)'(\mu_{\rm FS})}{(\Lambda_*^\theta)'(\mu_{\rm FS})}\)^2\right]\\
&&-\frac{1-\theta}{\mu_{\rm FS}^2}\left[1-\(\frac{p+2}{2\,p}\,\big(1+\tau'(\mu_{\rm FS})\big)\)^2\right].
\end{eqnarray*}
The cancellation of the terms involving $\tau''(\mu_{\rm FS})$ is a remarkable fact. By definition of $\vartheta_2(p,d)$, we get
\begin{eqnarray*}
\xi^\theta(\mu_{\rm FS})&=&-\frac 14\,(p^2-4)\,c_{p,d}\\
&&-\,\frac{2\,p\,\theta-(p-2)}{2\,p\,\mu_{\rm FS}^2}\left[1-\(\frac{p+2}{2\,p\,\theta-(p-2)}\,\frac{\theta-\vartheta_2(p,d)}{1-\vartheta_2(p,d)}\)^2\right]\\
&&-\frac{1-\theta}{\mu_{\rm FS}^2}\left[1-\(\frac{p+2}{2\,p}\,\frac 1{1-\vartheta_2(p,d)}\)^2\right]
\end{eqnarray*}
and finally arrive at
\[
\xi^\theta(\mu_{\rm FS})=-\frac 14\,(p^2-4)\,c_{p,d}+\frac{p+2}{4\,p^2\,\mu_{\rm FS}^2}\,\frac{(1-\theta)\,\big(2\,p\,\vartheta_2(p,d)-(p-2)\big)^2}{\big(1-\vartheta_2(p,d)\big)^2\,\big(2\,p\,\theta-(p-2)\big)}\,.
\]
At this point, we can observe that $\vartheta(p,d)\ge\frac{p-2}{2\,p}$. The reader is then invited to check that the function $\theta\mapsto\xi^\theta(\mu_{\rm FS})$ is nonincreasing on $[\vartheta(p,d),1]$ and
\[
\xi^{\vartheta_2(p,d)}(\mu_{\rm FS})=-\frac 14\,(p^2-4)\,c_{p,d}+\frac{p+2}{4\,p^2\,\mu_{\rm FS}^2}\,\frac{2\,p\,\vartheta_2(p,d)-(p-2)}{1-\vartheta_2(p,d)}=0
\]
because of (ii) in Theorem~\ref{Thm:T1bis}. Recall that the positivity of $c_{p,d}$ is required in \eqref{positivityofcpd}. 

We have then proved that if $\vartheta_2(p,d)\le\vartheta(p,d)$, $\xi^\theta(\mu_{\rm FS})$ is negative for any $\theta\in(\vartheta(p,d),1]$. Otherwise, if $c_{p,d}$ is positive, $\xi^\theta(\mu_{\rm FS})$ is positive if $\theta\in[\vartheta(p,d),\vartheta_2(p,d))$ and negative if $\theta\in(\vartheta_2(p,d),1]$.

The expansion \eqref{Eqn:Expansion} and the subsequent computations are valid, and make sense for the approximation of a local minimizer of $\mathcal Q_\Lambda^\theta$, as soon as the coefficient $c_{p,d}$, whose expression is established in Section~\ref{Sec:Qmu}, is positive. Then for any $\theta\in(\max\{\vartheta(p,d),\vartheta_2(p,d)\},1)$, the curve of the energies of the non-symmetric solutions of the Euler-Lagrange equations is concave, nondecreasing as a function of $\Lambda$ in a neighborhood of the bifurcation point, and below the energies of the symmetric functions. If $\vartheta_2(p,d)>\vartheta(p,d)$, then the curve of the energies of the non-symmetric solutions is above the energies of symmetric functions in a neighborhood of the bifurcation point if $\theta\in[\vartheta(p,d),\vartheta_2(p,d))$. Practically,  whether $c_{p,d}$ is positive or not relies either on the sufficient condition given in Theorem \ref{Thm:Check} or on numerical computations. However, the estimate of Section~\ref{Sec:Check} shows that this occurs at least in a large subinterval of $(2,2^*)$. This completes the proof of Theorem~\ref{Thm:T2}.\\ \finprf

\section{Numerical results and the two scenarios}\label{Sec:NumericsAndScenarios}

\subsection{Symmetric and non-symmetric branches, and their asymptotic behavior}\label{Sec:NumKnown}

In \cite{Freefem} the branches of solutions which bifurcate from the branches of symmetric solutions at the smallest possible value of $\Lambda$ have been computed numerically. For completeness, we start by presenting some of these numerical results, which are the main motivation of the present paper. The branch of symmetric solutions is explicit. The branch $\mu\mapsto(\Lambda^\theta(\mu),J^\theta(\mu))$ bifurcates from the symmetric ones at $\mu=\mu_{\rm FS}$ and is computed numerically. The algorithm is based on descent techniques and on an iteration scheme which allows us to compute the branches of solutions by continuation. We carried out the computations for dimension $d=5$ and various values of $p$ and~$\theta$. We have of course no guarantee that the solutions that we have computed are the optimal ones, but at least the values that we have found are fully compatible with what is expected for theoretical reasons. In particular, the curve of the computed estimates of the best constant is an increasing function of $\Lambda$ with the right convexity properties, which can reasonably be expected to coincide with $\Lambda\mapsto\mathsf K_{\rm CKN}(\theta,\Lambda,p)$. Moreover, when $\theta$ approaches $\vartheta(p,d)$ from above, the curve $\Lambda\mapsto\mathsf K_{\rm CKN}(\theta,\Lambda,p)$ approaches $\Lambda\mapsto\max\{\mathsf K_{\rm CKN}^*(\theta,\Lambda,p),\mathsf K_{\rm GN}\}$ when $\mathsf K_{\rm GN}>\mathsf K_{\rm CKN}^*(\vartheta(p,d),\Lambda_{\rm FS}(p,\vartheta(p,d)),p)$. Last but not least, the asymptotics predicted in Theorem~\ref{Thm:asymptott} are not only correct (dotted lines in Figs~\ref{fig1}--\ref{fig3}) but provide a good upper estimate of the curve in the whole range $\Lambda>0$. 
\begin{figure}[ht]\begin{center}
\includegraphics[width=8cm]{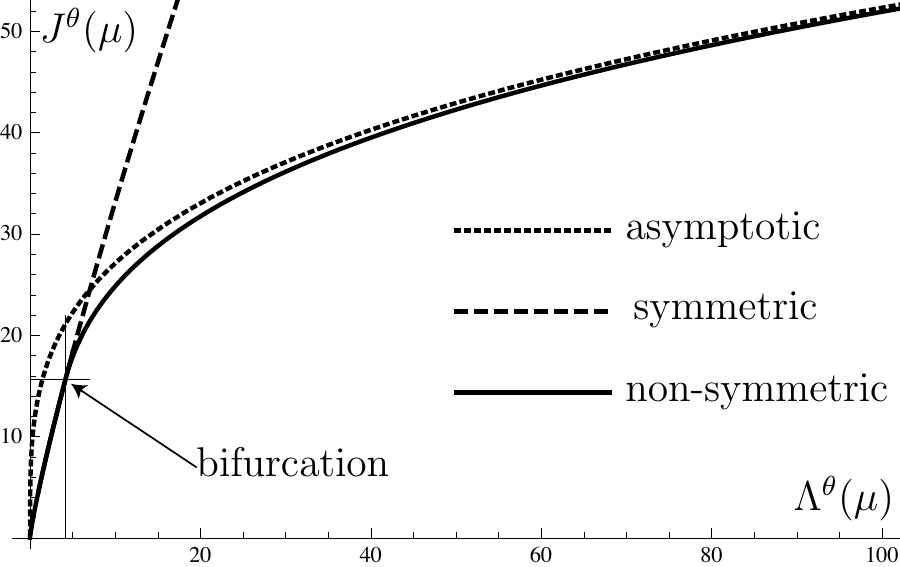}
\caption{\label{fig1} Parametric plot of $\mu\mapsto(\Lambda^\theta(\mu),J^\theta(\mu))$ for $p=2.8$, $d=5$, $\theta=1$. Non-symmetric solutions bifurcate from symmetric ones at a bifurcation point $\mu=\mu_{\rm FS}$ computed by V.~Felli and M.~Schneider. The branch behaves for large values of $\Lambda$ as predicted by F.~Catrina and Z.-Q.~Wang.}
\end{center}\end{figure}
\begin{figure}[ht]\begin{center}
\includegraphics[width=8cm]{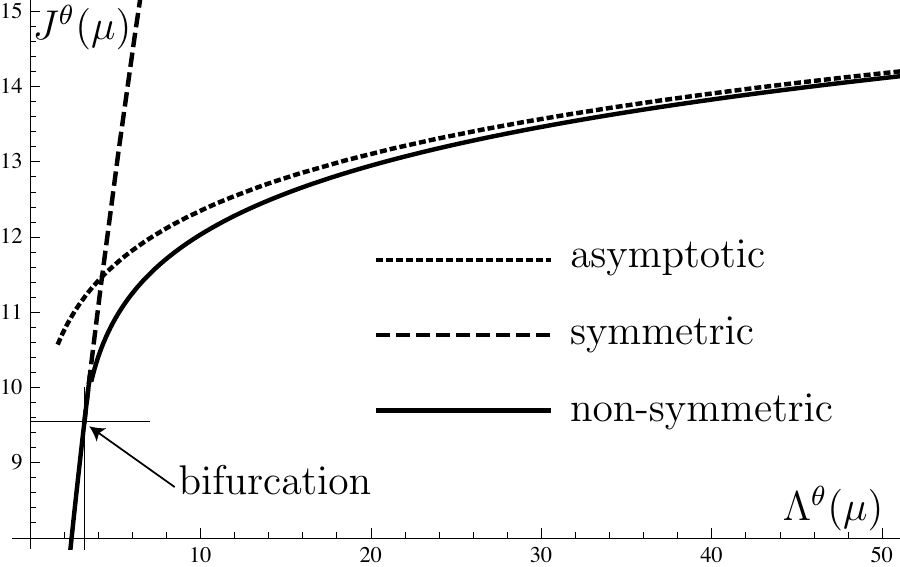}
\caption{\label{fig2} Parametric plot of $\mu\mapsto(\Lambda^\theta(\mu),J^\theta(\mu))$ for $p=2.8$, $d=5$, $\theta=0.8$. The behavior is similar to the case $\theta=1$ up to the reparametrization described in Section~\ref{Sec:Intro}, while the asymptotic behavior of the branch for large values of $\Lambda$ is in agreement with the results of Section~\ref{Sec:Gagliardo-Nirenberg}.}
\end{center}\end{figure}
\begin{figure}[ht]\begin{center}
\includegraphics[width=8cm]{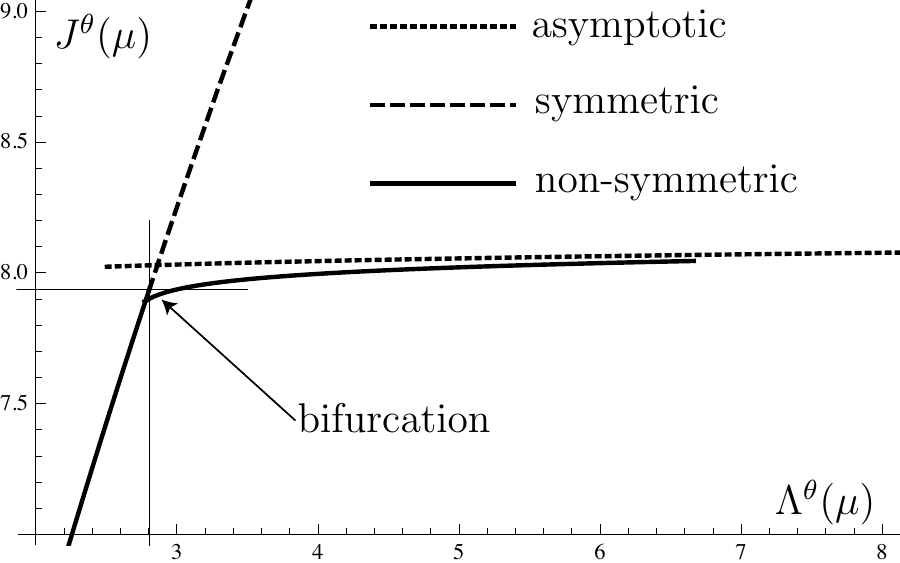}
\caption{\label{fig3} Parametric plot of $\mu\mapsto(\Lambda^\theta(\mu),J^\theta(\mu))$ for $p=2.8$, $d=5$, $\theta=0.72>\vartheta(2.8,5)\approx0.7143$. The asymptotic behavior of the branch for large values of $\Lambda$ is in agreement with the results of Section~\ref{Sec:Gagliardo-Nirenberg}. We may however notice that the bifurcation seems not to occur where expected. A more detailed computation in the Section~\ref{Sec:BifurcationsQualitativeDependence} sheds light on this phenomenon.}
\end{center}\end{figure}

\subsection{Two scenarios}\label{Sec:TwoScenarii}

The branch of symmetric minimal solutions, $(\Lambda, u_\Lambda^*)$, is known explicitly and is monotone in energy, that is, the function $\Lambda\mapsto \mathcal Q_\Lambda^\theta(u_\Lambda^*)$ is monotone increasing in $(0,+\infty)$. In the computations described in \cite{Freefem} we observe that the branch of non-symmetric solutions, $(\Lambda^\theta(\mu), \mathcal Q_\mu^\theta[u_\mu])$, is monotone for some values of~$\theta$ (for instance for $\theta=1$), but not always. More concretely, for certain values of $p$ and~$d$, the numerical results show that there exists an exponent $\vartheta_1=\vartheta_1(p,d)\in(\vartheta(p,d),1)$ such that for any $\theta\in[\vartheta_1, 1]$ the branch $(\Lambda^\theta(\mu),\mathcal Q_\mu^\theta[u_\mu])$ is monotone increasing. But when $\vartheta(p,d)<\theta<\vartheta_1(p,d)$, a dramatic change occurs: see Figs.~\ref{fig5} and~\ref{fig6}. For the values of $p$ and $d$ that have been considered numerically in those figures, the branch is not monotone anymore for $\mu>\mu_{\rm FS}$, thus producing non-symmetric solutions and candidates for optimal functions in the inequalities for values of $\Lambda<\Lambda^\theta(\mu_{\rm FS})$. This phenomenon provides an explanation for the results proved in \cite{springerlink:10.1007/s00526-011-0394-y} using rigorous \emph{a priori} estimates.

The limiting case $\theta=\vartheta(p,d)$ is very interesting: see Figs.~\ref{fig6} and~\ref{fig7r}. Let us define $\Lambda_{\rm GN}^*(p,d)$ and $\mu_{\rm GN}$ such that
\[
\mathsf K_{\rm GN}=\mathsf K_{\rm CKN}^*(\vartheta(p,d),\Lambda_{\rm GN}^*(p,d),p)\quad\mbox{and}\quad\Lambda_*^{\vartheta(p,d)}(\mu_{\rm GN})=\Lambda_{\rm GN}^*(p,d)\,.
\]
Whether $\mathsf K_{\rm GN}$ is larger or smaller than $\mathsf K_{\rm CKN}^*(\vartheta(p,d),\Lambda_{\rm FS}(p,\vartheta(p,d)),p)$ determines, at least in the framework of our computations, whether $\theta$ is smaller than $\vartheta_1(p,d)$ or not. This has been observed in \cite{Oslo} and theoretical consequences have been established in~\cite{springerlink:10.1007/s00526-011-0394-y}, in the limit regime $p\to2$. Before going further, let us observe that $\vartheta_1(p,d)$ is an exponent associated with a global property of the branch. 

Based on our numerical computations, we are now in position to formulate the following alternative.

\noindent\emph{Scenario~1.} If $\mathsf K_{\rm GN}\le\mathsf K_{\rm CKN}^*(\vartheta(p,d),\Lambda_{\rm FS}(p,\vartheta(p,d)),p)$, then for all $\theta\ge\vartheta(p,d)$, the optimal functions are symmetric for any $\Lambda\in(0,\Lambda_{\rm FS}(p,\theta)]$ and the branch of non-symmetric solutions is optimal for any $\Lambda>\Lambda_{\rm FS}(p,\theta)$. Such solutions exist for arbitrarily large values of $\Lambda$ if $\theta>\vartheta(p,d)$: see Figs.~\ref{fig7l} and~\ref{fig7c}, but may exist only for a finite range of $\Lambda$ if $\theta=\vartheta(p,d)$ and $\mathsf K_{\rm GN}<\mathsf K_{\rm CKN}^*(\vartheta(p,d),\Lambda_{\rm FS}(p,\vartheta(p,d)),p)$: see Fig.~\ref{fig7r}.

\noindent\emph{Scenario~2.} If $\mathsf K_{\rm GN}>\mathsf K_{\rm CKN}^*(\vartheta(p,d),\Lambda_{\rm FS}(p,\vartheta(p,d)),p)$, there exists $\vartheta_1=\vartheta_1(p,d)\in(\vartheta(p,d),1)$ such that for any $\theta\in[\vartheta_1, 1]$ the branch is monotone increasing. We further observe numerically that $\vartheta_1=\vartheta_2$ (see Fig.~\ref{fig10}), where $\vartheta_2$ has been defined in Section~\ref{Sec:theta2}: for any $\theta\in[\vartheta(p,d),\vartheta_2(p,d))$, we know that the branch of non-symmetric functions is decreasing in a neighborhood of the bifurcation point, but also has a larger energy than the symmetric solutions of the Euler-Lagrange equation (for the same value of~$\Lambda$). Hence, in the critical case $\theta=\vartheta(p,d)$ we have $\Lambda_{\rm GN}^*(p,d)<\Lambda_{\rm FS}(p,\vartheta(p,d))$ and for any $\Lambda$ in the range $(0,\Lambda_{\rm GN}^*]:=\{\Lambda_*^{\vartheta(p,d)}(\mu)\,:\,\mu\in(0,\mu_{\rm GN}]\}$, the optimal functions are symmetric and $\mathsf K_{\rm CKN}(\vartheta(p,d),\Lambda,p)=\mathsf K_{\rm CKN}^*(\vartheta(p,d),\Lambda,p)$. See Fig.~\ref{fig6}. Moreover, in the language of the concentration-compactness method, according to \cite{1005}, for any $\Lambda>\Lambda_{\rm GN}^*(p,d)$ the optimal constant is determined by the \emph{problem at infinity} and $\mathsf K_{\rm CKN}(\vartheta(p,d),\Lambda,p)=\mathsf K_{\rm GN}$. From the viewpoint of the reparametrized branch, we numerically observe that both $\mu\mapsto\Lambda^{\vartheta(p,d)}(\mu)$ and $\mu\mapsto J^{\vartheta(p,d)}(\mu)$ are decreasing for $\mu>\mu_{\rm FS}$, at least for the values of $p$ for which computations have been done. In the subcritical case corresponding to $\vartheta(p,d)<\theta<\vartheta_2(p,d)$, the reparametrized branch $(\Lambda^\theta,J^\theta)$ is not monotone. Numerically we observe that it is monotone for $\theta>\vartheta_2(p,d)$, hence supporting the observed fact that $\vartheta_1(p,d)=\vartheta_2(p,d)$ (see Fig.~\ref{fig10}).

Altogether, based on our numerical observations, what decides between \emph{Scenario~1} and \emph{Scenario~2} is the relative value of $\mathsf K_{\rm GN}$ and $\mathsf K_{\rm CKN}^*(\vartheta(p,d),\Lambda_{\rm FS}(p,\vartheta(p,d)),p)$. Equality of these two optimal constants determines a value of $p=p^\star(d)$. Numerically we find that $p^\star(5)\approx3.001$ and $\vartheta(p^\star(5),5)=\vartheta_1(p^\star(5),5)\approx0.834$. For $p\in[p^\star(d),2^*)$, only \emph{Scenario~1} occurs (numerical observation). For $p\in(2,p^\star(d))$ we have $\vartheta(p,d)>\vartheta_1(p,d)$ and \emph{Scenario~2} occurs. More precisely, the fact that the branch cannot be globally monotone increasing if $\theta<\vartheta_2(p,d)$ is a consequence of Section~\ref{Sec:theta2} while the fact that the branch is monotone increasing if $\theta>\vartheta_2(p,d)$ is a numerical observation. These results are fully consistent with the ones of \cite{springerlink:10.1007/s00526-011-0394-y} and \cite{Oslo}. Now let us give some details.

\subsection{Bifurcations and qualitative dependence in \texorpdfstring{$\theta$}{theta}}\label{Sec:BifurcationsQualitativeDependence}

In Fig.~\ref{fig3}, a careful inspection shows that the symmetric and the non-symmetric branches of solutions differ for values of $\Lambda$ strictly less than $\Lambda^\theta(\mu_{\rm FS})$. This is not the case for $\theta$ close enough to $1$: see Fig.~\ref{fig4}, but very clear on Fig.~\ref{fig5}.
\begin{figure}[hb]\begin{center}
\includegraphics[height=5.5cm, width=6.8cm]{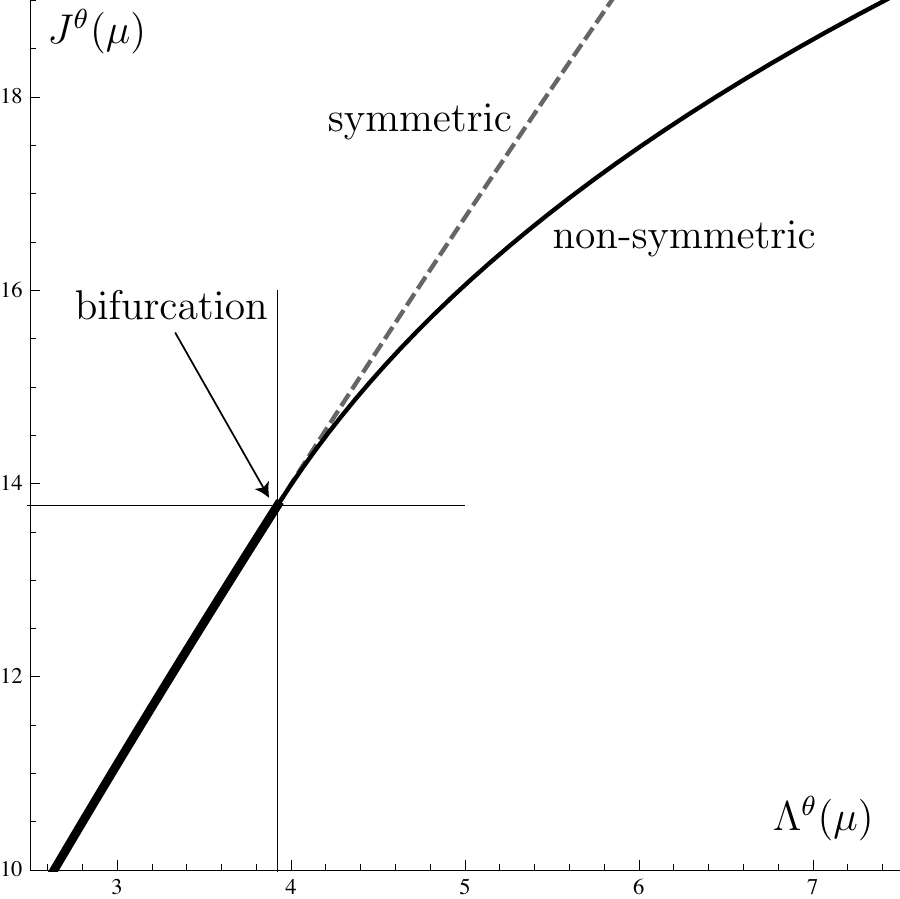}
\caption{\label{fig4} Parametric plot of $\mu\mapsto(\Lambda^\theta(\mu),J^\theta(\mu))$ for $p=2.8$, $d=5$, $\theta=0.95$ close to the bifurcation point. For $\Lambda\le\Lambda^\theta(\mu_{\rm FS})$, all solutions are symmetric, while for $\Lambda>\Lambda^\theta(\mu_{\rm FS})$, non-symmetric solutions provide a better constant in the interpolation inequalities, exactly as for the case $\theta=1$.}
\end{center}\vspace*{-12pt}\end{figure}
\begin{figure}[ht]\begin{center}
\includegraphics[height=6cm,width=7.5cm]{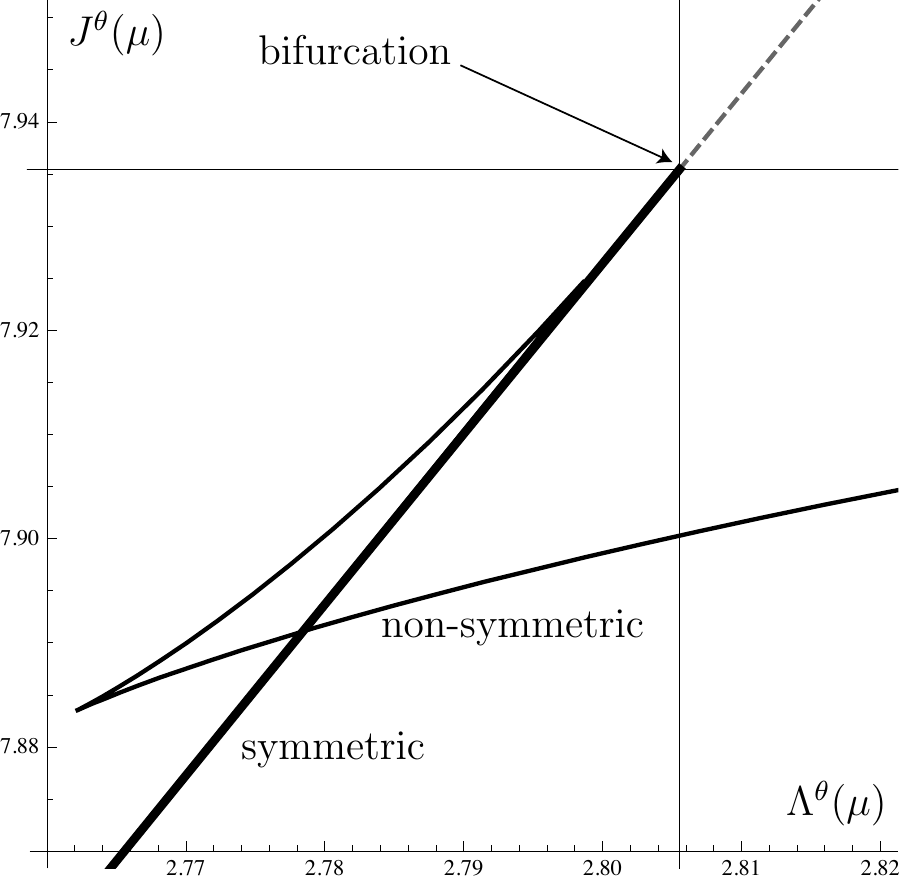}
\caption{\label{fig5} Parametric plot of $\mu\mapsto(\Lambda^\theta(\mu),J^\theta(\mu))$ for $p=2.8$, $d=5$, $\theta=0.72$ close to the bifurcation point. Non-symmetric solutions exist for $\Lambda<\Lambda^\theta(\mu_{\rm FS})$. There exists a value $\Lambda_{\rm GN}^*<\Lambda^\theta(\mu_{\rm FS})$ such that optimal functions are symmetric for any $\Lambda\in(0,\Lambda_{\rm GN}^*)$ and are non-symmetric for $\Lambda>\Lambda_{\rm GN}^*$. When $\Lambda=\Lambda_{\rm GN}^*$, symmetric and non-symmetric optimal functions co-exist.}
\end{center}\vspace*{-12pt}\end{figure}
When $\theta$ approaches $\vartheta(p,d)$, the branch (locally) converges to its limit: see Fig.~\ref{fig6}.
\begin{figure}[ht]\begin{center}
\includegraphics[height=6cm,width=7.5cm]{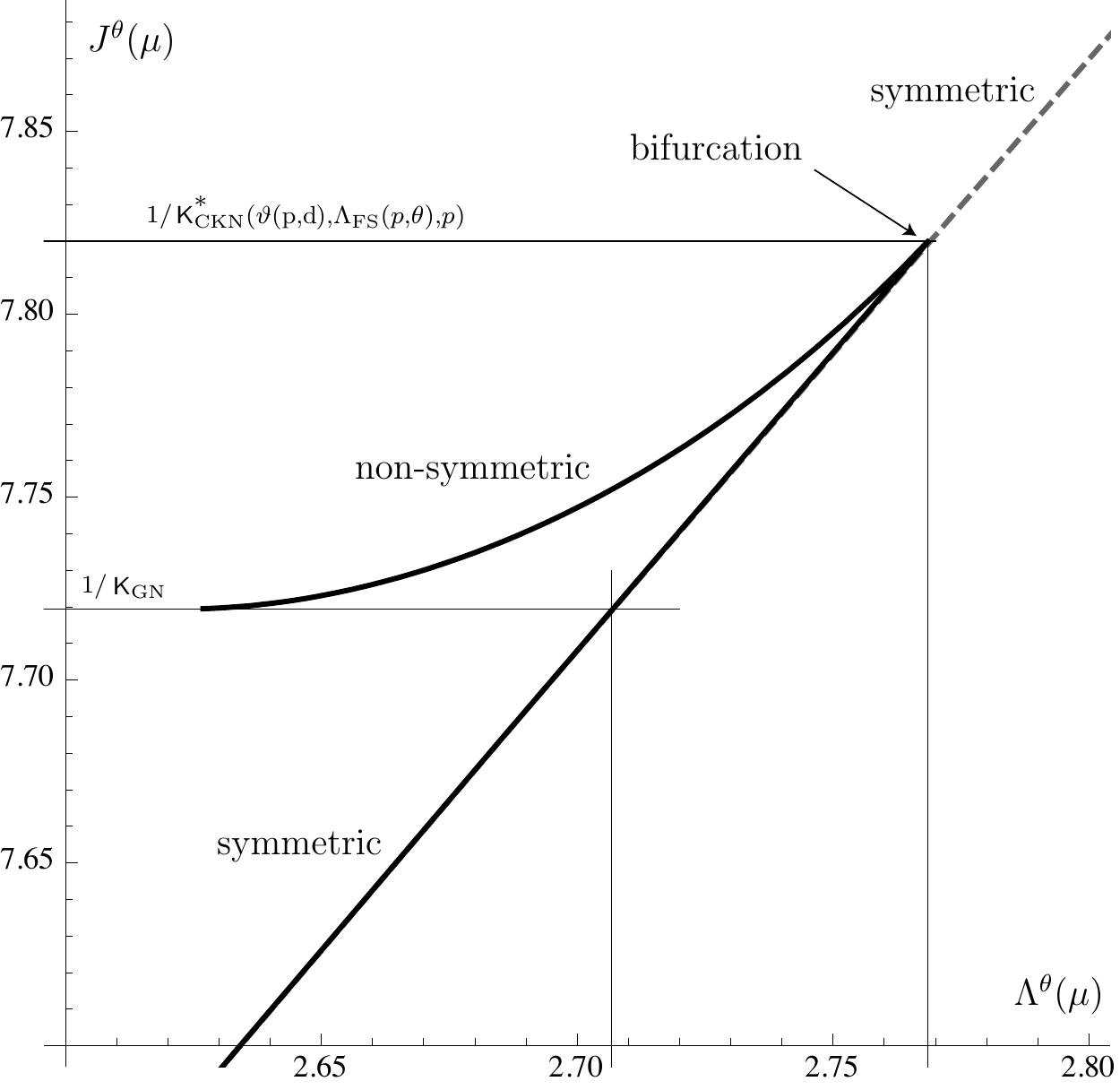}
\caption{\label{fig6} Critical case $\theta=\vartheta(p,d)$. Parametric plot of $\mu\mapsto(\Lambda^\theta(\mu),J^\theta(\mu))$ for $p=2.8$, $d=5$, $\theta=\vartheta(2.8,5)\approx0.7143$ close to the bifurcation point. Non-symmetric solutions exist for $\Lambda<\Lambda^\theta(\mu_{\rm FS})$ but, at least for the chosen values of $p$ and $d$, are never optimal functions for the interpolation inequalities. There exists a value $\Lambda_{\rm GN}^*<\Lambda^\theta(\mu_{\rm FS})$ such that optimal functions exists and are symmetric for any $\Lambda\in(0,\Lambda_{\rm GN}^*]$ and do not exist for $\Lambda>\Lambda_{\rm GN}^*$. Moreover, $\Lambda_{\rm GN}^*=\Lambda^\theta(\mu_{\rm GN})$ with the notations of Section~\ref{Sec:Gagliardo-Nirenberg} and $\mathsf K_{\rm CKN}(\vartheta(p,d),\Lambda,p)=\mathsf K_{\rm GN}$ for any $\Lambda>\Lambda_{\rm GN}^*$.}
\end{center}\end{figure}
The figures~\ref{fig5} and~\ref{fig6} correspond to \emph{Scenario~2} (for $\vartheta\le\theta<\vartheta_1$), that is, to the case
\[
\mathsf K_{\rm GN}>\mathsf K_{\rm CKN}^*(\vartheta(p,d),\Lambda_{\rm FS}(p,\vartheta(p,d)),p)\,.
\]
In other words, this means that $p<p^\star(d)$. 

The case $p>p^\star(d)$, \emph{i.e.,~Scenario~1,} also occurs and corresponding plots are shown in Figs.~\ref{fig7l}--\ref{fig7r}. There we take $d=5$, $p=3.15\ge p^\star(5)\approx3.001$.
\begin{figure}[ht]\begin{center}
\includegraphics[width=7.5cm]{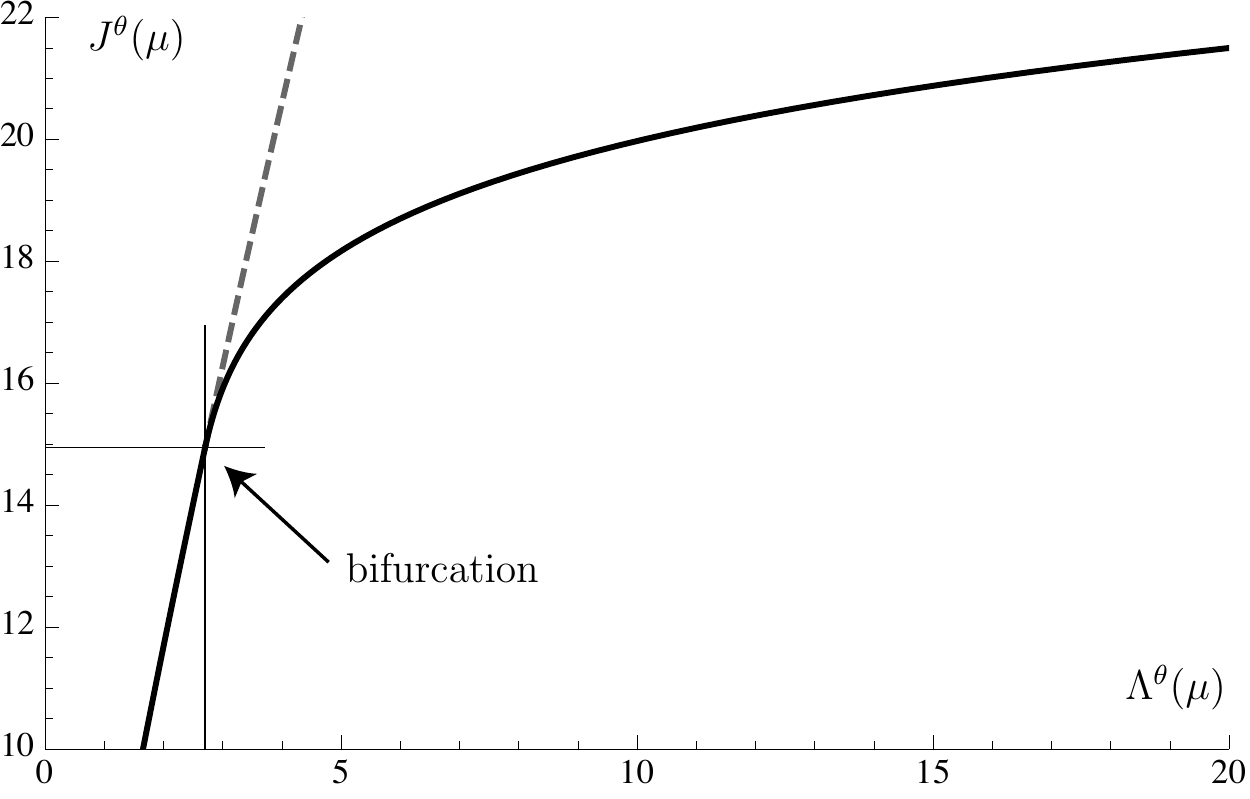}
\caption{\label{fig7l} Parametric plot of $\mu\mapsto(\Lambda^\theta(\mu),J^\theta(\mu))$ for $p=3.15$, $d=5$, $\theta=1$. The behavior of the non-symmetric branch is similar to the one found in Fig.~\ref{fig1}.}
\end{center}\end{figure}
\begin{figure}[ht]\begin{center}
\includegraphics[width=7.5cm]{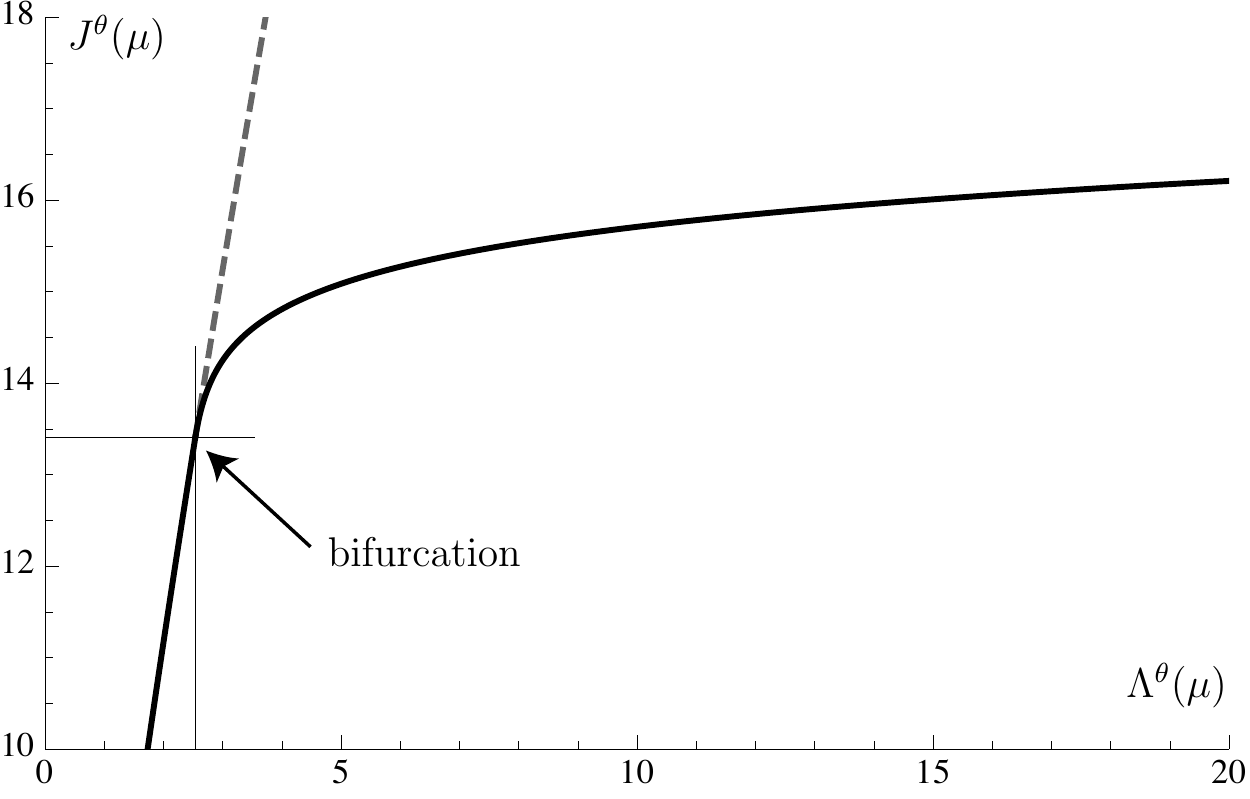}
\caption{\label{fig7c} Parametric plot of $\mu\mapsto(\Lambda^\theta(\mu),J^\theta(\mu))$ for $p=3.15$, $d=5$, $\theta=0.95$. The behavior of the non-symmetric branch is still similar to the one found in Fig.~\ref{fig4}.}
\end{center}\end{figure}
\begin{figure}[ht]\begin{center}
\includegraphics[width=7.5cm]{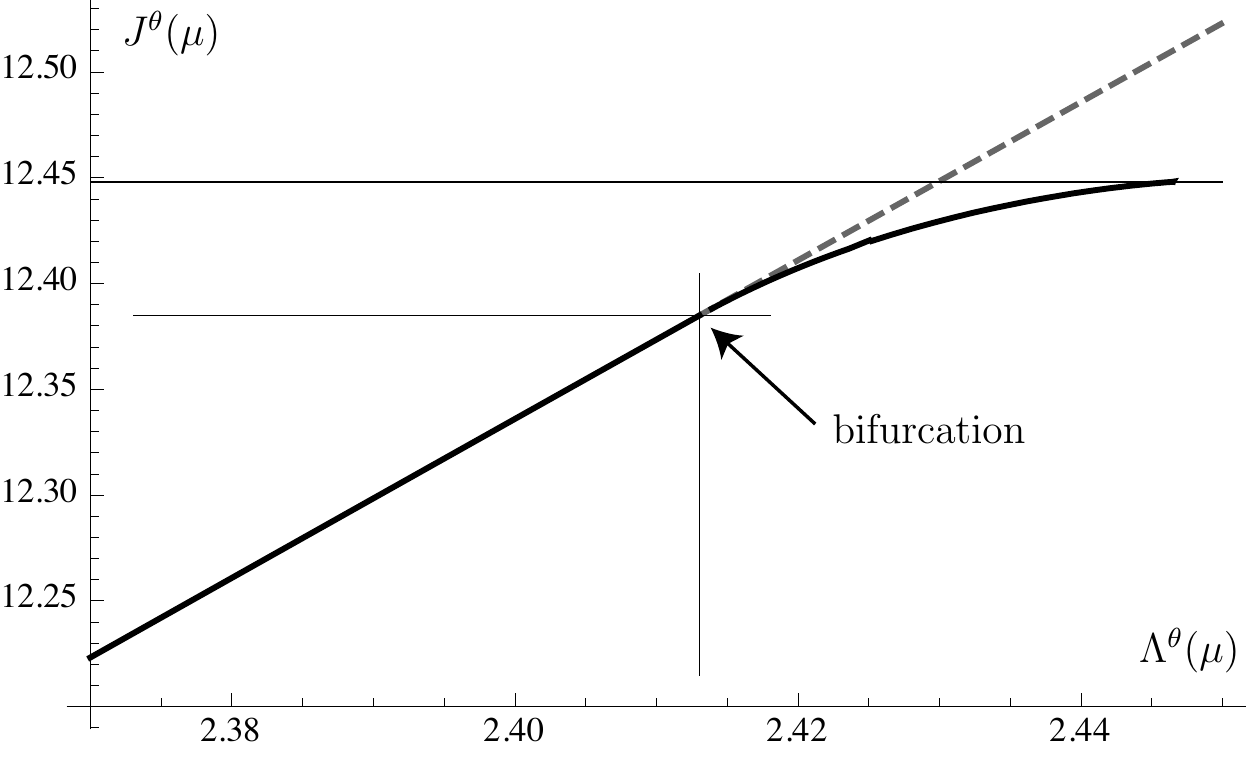}
\caption{\label{fig7r} Parametric plot of $\mu\mapsto(\Lambda^\theta(\mu),J^\theta(\mu))$ for $p=3.15$, $d=5$, $\theta=\vartheta(3.15,5)\approx0.9127$, that is for the critical value of $\theta$.}
\end{center}\end{figure}
\clearpage In Section~\ref{Sec:Bifurcation} we proved that the symmetric and the non-symmetric branches of solutions are always tangent at $\Lambda_{\rm FS}$. What happens in a neighborhood of the bifurcation point is therefore difficult to decide in view of the plots of the branches, especially when $\theta$ is in a neighborhood of $\vartheta_1(p,d)$. To illustrate this difficulty, we may for instance observe that figure Fig.~\ref{fig5} is an enlargement of Fig.~\ref{fig3}. Hence we have to discard the possibility of other scenarios than the ones described in Section~\ref{Sec:TwoScenarii} at least in a neighborhood of the bifurcation point.

The computations of Section \ref{Sec:Bifurcation} have been done for the approximation $u_{(\mu)}$ of the solution $u_\mu$. We expect that the estimates converge as $\mu\to(\mu_{\rm FS})_+$ and this is what is observed numerically. Whether $c_{p,d}$ is positive has been discussed in Section~\ref{Sec:Check}, but can be checked numerically: we know that $c_{p,d}$ is positive and finite as long as $c_{p,d}^{\rm approx}$ is positive (see Fig.~\ref{fig9}, and Fig.~\ref{fig11} for a discussion of the sign of $c_{p,d}^{\rm approx}$), and numerically we find that $c_{p,d}$ is always positive.
\begin{figure}[ht]\begin{center}
\includegraphics[width=7cm]{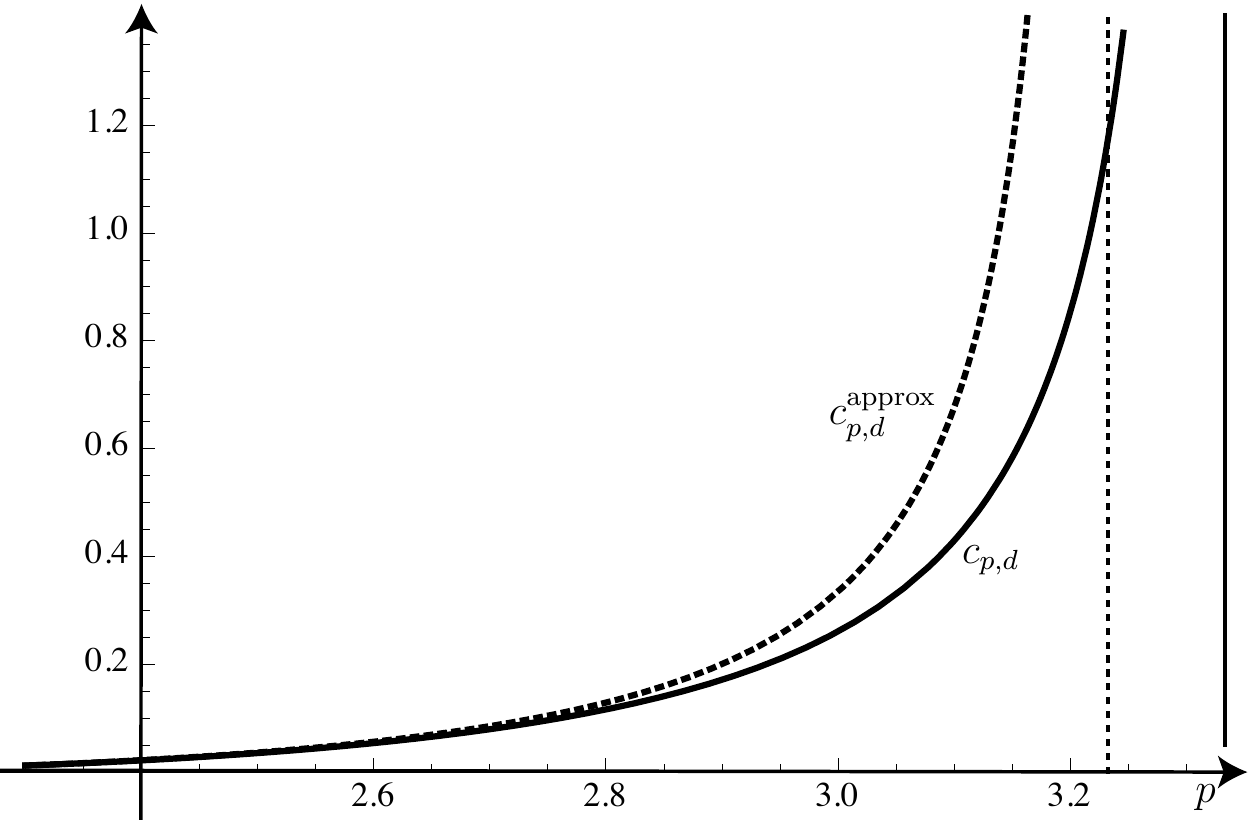}
\caption{\label{fig9} Computation of $c_{p,d}$ with $d=5$ as a function of $p$. We observe that the numerical solution is positive for any $p\in(2,10/3)$ where $10/3$ is the critical exponent (corresponding to the plain vertical line). The estimate of Section~\ref{Sec:Check} corresponds to the dotted line and holds for $p\lesssim3.2323$ (corresponding to the dotted vertical line).}
\end{center}\vspace*{-12pt}\end{figure}
\begin{figure}[ht]\begin{center}
\includegraphics[width=7cm]{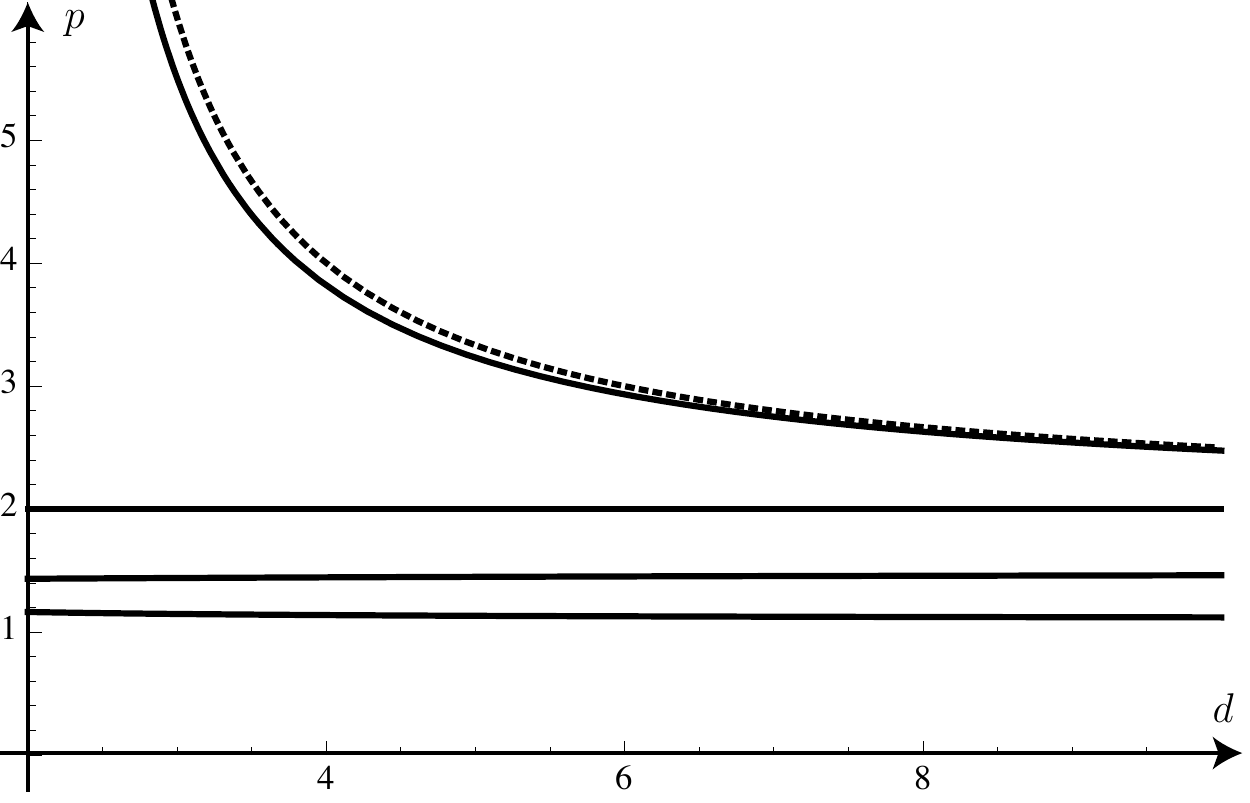}
\caption{\label{fig11} The plain curves are the zeros of $c_{p,d}^{\rm approx}$ as $d$ varies in the interval $(2,10)$, except $p=2$ which is a singularity. The dotted curve is given by $d\mapsto2\,d/(d-2)=2^*$. For a given $d$, $c_{p,d}^{\rm approx}$ is therefore positive for a large subinterval in $p$ of $(2,2^*)$.}
\end{center}\end{figure}

Under the above precautions, we know from Section \ref{Sec:Bifurcation} that there exists a number $\vartheta_2(p,d)$ such that the behavior of the branch in a neighborhood of the bifurcation point $\mu=\mu_{\rm FS}$ discriminates between two regimes corresponding to $\theta>\vartheta_2(p,d)$ and $\theta<\vartheta_2(p,d)$. When $\theta<\vartheta_2(p,d)$, we have $(\Lambda^\theta)'(\mu_{\rm FS})<0$ and the contrary happens when $\theta>\vartheta_2(p,d)$. So, locally, the reparametrized branch is on the right of the bifurcation point and $J^\theta$ is a monotone increasing function of $\Lambda$ (at least when $\mu$ is in a right neighborhood of $\mu_{\rm FS}$) if and only if $\theta>\vartheta_2(p,d)$. Since global monotonicity implies local monotonicity near the bifurcation point, if the numerical computations of the branches are consistent with the study of the bifurcation carried out in Section \ref{Sec:Bifurcation}, then we should have that $\vartheta_1(p,d)=\vartheta_2(p,d)$. It is not easy to establish a qualitative property such as the monotonicity, but at least we observe in Fig.~\ref{fig10} that for $\theta=\vartheta(p,d)$ the range in $p$ for which $\vartheta_2(p,d)\ge\vartheta(p,d)$ corresponds to the range in $p$ for which the Gagliardo-Nirenberg constant compares well with the energy at the bifurcation point.
\begin{figure}[ht]\begin{center}
\includegraphics[width=9cm]{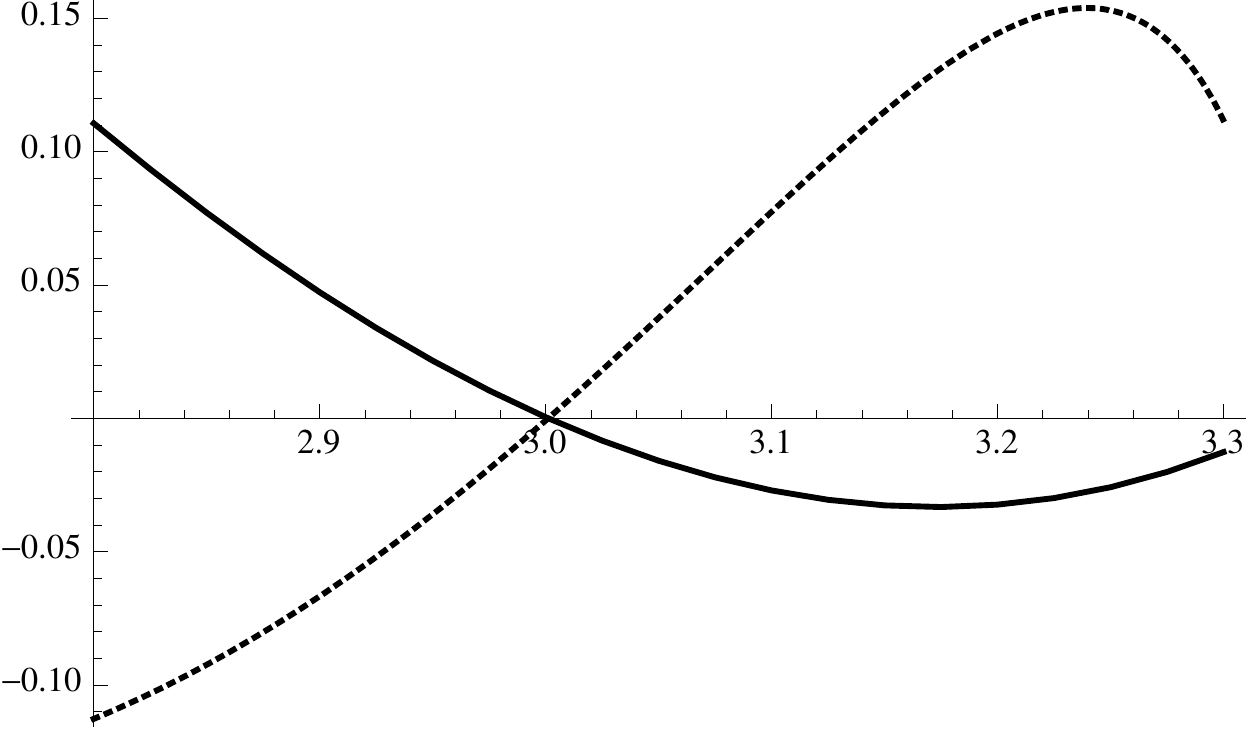}
\caption{\label{fig10} Comparison of the local and asymptotic criteria in the critical case $\theta=\vartheta(p,d)$ when $d=5$. The dotted curve corresponds to the function $p\mapsto1/\mathsf K_{\rm GN}(p,d)-J^{\vartheta(p,d)}(\mu_{\rm FS})$, that is, the difference of the asymptotic energy of the branch and the energy at the bifurcation point: when it is negative, this means that $\vartheta_1(p,d)$ is defined and larger than $\vartheta(p,d)$, so that \emph{Scenario~2} takes place. When it is positive, this means that \emph{Scenario~1} can be expected. The exponent $\vartheta_2(p,d)$ can be defined for any $p\in(2,2^*)$. The plain curve represents $p\mapsto5\,(\vartheta_2(p,d)-\vartheta(p,d))$ and positivity indicates that, at least locally around the bifurcation point, \emph{Scenario~2} takes place. Hence the local (around the bifurcation point) and asymptotic (as $\Lambda\to+\infty$) criteria coincide.}
\end{center}\end{figure}

\section{Concluding remarks}\label{Sec:Conclusion}

In this paper we have established the asymptotic behavior of the branches for all $\theta\in (\vartheta(p,d), 1]$. There is a good agreement between this behavior and that of the numerical branches, which reinforces the conjecture that the computed branches contain the extremals for the Caffarelli-Kohn-Nirenberg inequalities.

We have also studied the precise behavior of the branches of non-symmetric solutions near the first possible bifurcation point on the symmetric branch within the framework of a particular \emph{Ansatz} defined by \eqref{Eqn:Expansion}. By doing so, we have obtained the existence of a critical exponent $\vartheta_2(p,d)$ above which the branch is monotone, increasing and potentially optimal, and below which it is certainly not optimal in a neighborhood of the bifurcation point.

The final result of this paper is the comparison of the above criterion based on the local behavior of the branch near the bifurcation point and the criterion based on the asymptotic energy of the branch in the critical case $\theta=\vartheta(p,d)$, using numerical methods. They coincide, which gives solid grounds to the alternative that has been numerically observed:\\
\emph{Scenario~1.} The non-symmetric branch is monotone increasing for any $\theta\in[\vartheta(p,d),1]$.\\
\emph{Scenario~2.} The non-symmetric branch is monotone increasing for any $\theta\in(\vartheta_2(p,d),1]$ but it is not optimal near the bifurcation point if $\theta\in[\vartheta(p,d),\vartheta_2(p,d))$.

This also suggests that no other \emph{scenario} can take place, consistently with our numerical computations. Hence we arrive at the conclusion that \emph{Scenario~1} takes place when $\vartheta_2(p,d)<\vartheta(p,d)$ and \emph{Scenario~2} holds if $\vartheta_2(p,d)>\vartheta(p,d)$. The branches of solutions that we have computed are likely to be optimal for the Caffarelli-Kohn-Nirenberg inequalities.

\appendix\section{Some useful quantities}

\subsection{Computing integrals}\label{A1}

We recall that $f(q):=\int_{\R}\frac{ds}{(\cosh s)^q}$ can be explicitly computed: $f(q)=\frac{\sqrt\pi\;\Gamma(\frac q2)}{\Gamma(\frac{q+1}2)}$. An integration by parts shows that $f(q+2)=\frac q{q+1}\,f(q)$. The following formulae are reproduced with no change from~\cite{DDFT} (also see~\cite{DEL2011}). As in~\cite{DDFT}, with $w(s)=(\cosh s)^{-\frac2{p-2}}$, we can define
\[
\mathsf I_q:=\int_{\R}|w(s)|^q\,ds\quad\mbox{and}\quad \mathsf J_2:=\int_{\R}|w'(s)|^2\;ds\,.
\]
Using the function $f$, we can compute $\mathsf I_2=f\big(\frac 4{p-2}\big)$, $\mathsf I_p=f\big(\frac{2\,p}{p-2}\big)=f\big(\frac 4{p-2}+2\big)$ and get the relations
\[
\mathsf I_2=\frac{\sqrt{\pi}\;\Gamma\big(\frac{2}{p-2}\big)}{\Gamma\big(\frac{p+2}{2\,(p-2)}\big)}\;,\quad \mathsf I_p=\frac{4\,\mathsf I_2}{p+2}\;,\quad \mathsf J_2=\frac{4\,\mathsf I_2}{(p+2)\,(p-2)}\,.
\]
As a special case, we have
\[
\mathsf I_p:=\inR{(\cosh s)^{-\frac{2\,p}{p-2}}}=\frac{4\,\sqrt\pi\,\Gamma\(\frac2{p-2}\)}{(p+2)\,\Gamma\(\frac{p+2}{2\,(p-2)}\)}\,.
\]

\subsection{Symmetric extremals and linearization}\label{A2}

Consider $w(s)=(\cosh s)^{-\frac 2{p-2}}$, which is the unique positive solution of
\[
-(p-2)^2\,w''+4\,w-2\,p\,w^{p-1}=0
\]
on $\R$, up to translations. The function $u(s):=\alpha\,w(\beta\,s)$ solves
\be{Eqn:AfterScaling}
-u''+\frac{4\,\beta^2}{(p-2)^2}\,u-\frac{2\,p\,\beta^2}{(p-2)^2}\,\alpha^{2-p}\,u^{p-1}=0\,.
\ee
With $\beta=\frac{p-2}2\,\sqrt\mu$ and $\alpha=(\frac p2\,\mu)^\frac1{p-2}$, $u=u_{\mu,*}$ is given by
\[
u_{\mu,*}(s)=\(\frac p2\,\mu\)^\frac1{p-2}\left[\cosh\(\frac{p-2}2\,\sqrt\mu\,s\)\right]^{-\frac 2{p-2}}\quad\forall\,s\in\R
\]
and solves~Ê\eqref{eq-ustar}.

Next we are interested in computing the ground state energy of the P\"oschl-Teller operator $\mathcal H_\mu=-\frac{d^2}{ds^2}+d-1+\mu-(p-1)\,u_{\mu,*}^{p-2}$, that is the lowest eigenvalue $\lambda_1(\mu)$ in the eigenvalue problem
\[\label{Eqn:Linearized}
\mathcal H_\mu\,\varphi_1=\lambda_1(\mu)\,\varphi_1\,.
\]
See \cite{Landau-Lifschitz-67,poschl1933bemerkungen} for further references. The function
\be{Eqn:varephi1}
\varphi_1(s):=\alpha^\frac p2\,(\cosh(\beta\,s))^{-\frac p{p-2}}=u_{\mu,*}^{p/2} \ee
solves
\[
-\varphi_1''+\frac 14\,\mu\,p^2\,\varphi_1-(p-1)\,u_{\mu,*}^{p-2}\, \varphi_1=0
\]
and provides a solution with
\[
\lambda_1(\mu)=d-1+\mu-\frac 14\,\mu\,p^2\,.
\]
The Sturm-Liouville theory guarantees that $\varphi_1$ generates the ground state and
\[
\inf_{\begin{array}{c}\varphi\in\H^1(\R^d)\cr\nrmC\varphi2^2=\nrmC{u_{\mu,*}}p^p\end{array}}\frac{q[\mu,\varphi]}{\nrmC{u_{\mu,*}}p^p}=\lambda_1(\mu)=\frac{\inC{\varphi_1\,\mathcal H_\mu\,\varphi_1}}{\nrmC\varphi2^2}\,.
\]
Notice that the condition $\lambda_1(\mu_{\rm FS})=0$ determines
\[
\mu_{\rm FS}=4\,\frac{d-1}{p^2-4}\,.
\]
Moreover, this shows that
\[
\lambda_1(\mu)=-\frac{p^2-4}4\,(\mu-\mu_{\rm FS})\,.
\]
Other eigenvalues of $\mathcal H_\mu$ can also be computed using classical transformations and special functions: see~\cite[p.~74]{Landau-Lifschitz-67}. Notice that in Section~\ref{Sec:Bifurcation}, we normalize the function $\varphi=\varphi_1\,f_1$ in the expansion \eqref{Eqn:Expansion} by the condition $\nrmC\varphi2=\nrmC{u_{\mu,*}}p$ consistently with~\eqref{Eqn:varephi1}.

\subsection{Useful quantities}\label{A3}

Collecting results of Sections~\ref{A1} and~\ref{A2}, with $\alpha^{p-2}=\frac p2\,\mu$, we find that
\begin{eqnarray*}
&&\frac{\inC{\varphi^2}}{\inC{u_{\mu,*}^p}}=1\,,\\
&&\frac{\inC{u_{\mu,*}^{p-2}\,\varphi^2}}{\inC{u_{\mu,*}^p}}=\frac{f(q+2)}{f(q)}_{\big|q=\frac{2\,p}{p-2}}\,\alpha^{p-2}=\frac{p^2\,\mu}{3\,p-2}\,,\\
&&\frac{\inC{u_{\mu,*}^{p-4}\,\varphi^4}}{\inC{u_{\mu,*}^p}}=\frac{f(q+4)}{f(q)}_{\big|q=\frac{2\,p}{p-2}}\,\alpha^{2\,(p-2)}=\frac{2\,p^3\,(p-1)\,\mu^2}{(3\,p-2)\,(5\,p-6)}\,,
\end{eqnarray*}
and \begin{eqnarray*}
\frac{\inC{|\nabla\varphi|^2}}{\inC{u_{\mu,*}^2}}&=&\alpha^{p-2}\,\frac{\mathsf I_p}{\mathsf I_2}\left[(d-1)+\(\frac p{p-2}\)^2\,\beta^2\,\(1-\frac{f(q+2)}{f(q)}\)_{\big|q=\frac{2\,p}{p-2}}\right]\\
&=&\frac 12\,\frac{p\,\mu}{p+2}\left[4\,(d-1)+\frac{p^2\,(p-2)\,\mu}{3\,p-2}\right]\,.
\end{eqnarray*}
If $\mu=\mu_{\rm FS}$, then we find that $\frac{\inC{|\nabla\varphi|^2}}{\inC{u_{\mu,*}^2}}=\frac{2\,p\,(p-2)\,(p^2+p-1)}{(p+2)\,(3\,p-2)}\,\mu_{\rm FS}^2$.

\subsection{First spherical harmonics}\label{A4}

Denote by $\zeta\in[0,\pi]$ the azimuthal angle and consider the Laplace-Beltrami operator $\mathcal L$ on the sphere $\S$. When $\mathcal L$ is restricted to functions on $\S$ depending only on $\zeta$, it takes the form
\[
\mathcal L\,f=\sin^{2-d}\zeta\,\frac d{d\zeta}\(\sin^{d-2}\zeta\,\frac{df}{d\zeta}\)
\]
and $\mathcal L$ is unitarily equivalent to $\mathsf L$ defined by
\[
\mathsf L\,g=(1-x^2)\,g''-(d-1)\,x\,g'\quad\;x\in[-1,1]
\]
whose eigenvalues are the \emph{Gegenbauer polynomials} or \emph{ultra-spherical polynomials.} The correspondence between the operators is simply given by
\[
f(\zeta)=g(\cos\zeta)
\]
and one can check that
\[
\int_0^\pi|f(\zeta)|^2\,\sin^{d-2}\zeta\;d\zeta=\int_{-1}^1|g(x)|^2\,(1-x^2)^{\frac d2-1}\,dx\,.
\]
It is also not difficult to check that the first Gegenbauer polynomials are
\[
g_0(x)=1\;,\quad g_1(x)=x\;,\quad g_2(x)=d\,x^2-1\;,\quad g_3(x)=(d+2)\,x^3-3\,x\,,
\]
with eigenvalues respectively equal to $\Lambda_0=0$, $\Lambda_1=d-1$, $\Lambda_2=2\,d$ and $\Lambda_3=3\,(d+1)$:
\[
-\,\mathsf L\,g_0=0\,,\;-\,\mathsf L\,g_1=(d-1)\,g_1\,,\;-\,\mathsf L\,g_2=2\,d\,g_2\,,\;-\,\mathsf L\,g_3=3\,(d+1)\,g_3\,.
\]

On $[0,\pi]$, we consider the probability measure
\[
d\nu(\zeta)=\frac 1{Z_d}\,\sin^{d-2}\zeta\;d\zeta
\]
where $Z_d=\int_0^\pi\sin^{d-2}\zeta\;d\zeta=\sqrt\pi\,\frac{\Gamma\(\frac{d-1}2\)}{\Gamma\(\frac d2\)}$. Then
\[
f_0(\zeta)=1\,,\; f_1(\zeta)=\sqrt d\cos\zeta\,,\; f_2(\zeta)=\sqrt{\frac{d+2}{2\,(d-1)}}\,(d\,\cos^2\zeta-1)
\]
are normalized eigenfunctions in $L^2([0,\pi],d\nu(\zeta))$:
\[
\int_0^\pi|f_i|^2\,d\nu=1\quad\forall\,i=0\,,\;1\,,\;2\,,
\]
with eigenvalues $\Lambda_0=0$, $\Lambda_1=d-1$ and $\Lambda_2=2\,d$:
\[
-\,\mathcal L\,f_0=0\,,\;-\,\mathcal L\,f_1=(d-1)\,f_1\,,\;-\,\mathcal L\,f_2=2\,d\,f_2\,.
\]
We also have the useful formulae
\begin{eqnarray*}
&&\int_0^\pi|f_1|^4\,d\nu=\frac{3\,d}{d+2}\,,\quad\kappa_{(d)}:=\int_0^\pi|f_1|^2\,f_2\,d\nu=\sqrt{\frac{2\,(d-1)}{d+2}}\,,\\
&&f_1^2=f_0+\sqrt{\frac{2\,(d-1)}{d+2}}\,f_2=f_0+\kappa_{(d)}\,f_2\,.
\end{eqnarray*}

\ack{This work has been supported by the ANR project NoNAP. J.D.~also acknowledges support from the ANR project STAB.} We thank Fr\'ed\'eric Hecht and Olivier Pironneau for their help for the implementation of our numerical scheme.\\[4pt]
{\sl\small \copyright~2013 by the authors. This paper may be reproduced, in its entirety, for non-commercial purposes.}

\section*{References}

\end{document}